\newtheorem{theorem}{Theorem}
\newtheorem{proposition}[theorem]{Proposition}
\newtheorem{lemma}[theorem]{Lemma}
\newtheorem{corollary}[theorem]{Corollary}
\theoremstyle{definition}
\newtheorem{remark}[theorem]{Remark}
\newtheorem{definition}[theorem]{Definition}
\newcommand{\cref}[1]{Corollary~\ref{c.#1}}
\numberwithin{equation}{section}
\numberwithin{theorem}{section}
\newcommand{\R}{\mathbb{R}}
\newcommand{\HH}{\mathcal{H}}
\newcommand{\ep}{\varepsilon}
\newcommand{\average}{{\mathchoice {\kern1ex\vcenter{\hrule height.4pt
width 6pt depth0pt} \kern-9.7pt} {\kern1ex\vcenter{\hrule
height.4pt width 4.3pt depth0pt} \kern-7pt} {} {} }}
\newcommand{\ave}{\average\int}
\renewcommand{\bar}{\overline}
\renewcommand{\tilde}{\widetilde}
\newcommand{\NN}{\boldsymbol N}
\newcommand{\J}{\mathcal A}
\begin{document}

\title[Regularity of stable solutions up to dimension 9]
{Stable solutions to semilinear elliptic equations\\ are smooth up to dimension 9}

\begin{abstract}
In this paper we prove the following long-standing conjecture:
{\it stable solutions to semilinear
elliptic equations are bounded (and thus smooth) in dimension $n \leq 9$.}

This result, that was only known to be true for $n\leq4$, is optimal: $\log(1/|x|^2)$ is a $W^{1,2}$ singular stable solution for $n\geq10$.

The proof of this conjecture is a consequence of a new universal estimate: we prove that, in dimension $n \leq 9$, stable solutions are bounded in terms only of their $L^1$ norm, independently of the nonlinearity.
In addition, in every dimension we establish a higher integrability result for the gradient and optimal integrability results for the solution in Morrey spaces.

As one can see by a series of classical examples, all our results are sharp.
Furthermore, as a corollary we obtain that extremal solutions of Gelfand problems are  $W^{1,2}$ in every
dimension and they are smooth in dimension $n \leq 9$. 
This answers to two famous open
problems posed by Brezis and Brezis-V\'azquez.
\end{abstract}

\author[X. Cabr\'e]{Xavier Cabr\'e}
\address{﻿X.C.\textsuperscript{1,2,3} ---
\textsuperscript{1}ICREA, Pg.\ Lluis Companys 23, 08010 Barcelona, Spain \& 
\textsuperscript{2}Universitat Polit\`ecnica de Catalunya, Departament de Matem\`{a}tiques, 
Diagonal 647, 08028 Barcelona, Spain \& 
\textsuperscript{3}BGSMath, Campus de Bellaterra, Edifici C, 08193 Bellaterra, Spain.
}
\email{xavier.cabre@upc.edu}

\author[A. Figalli]{Alessio Figalli}
\address{A.F. --- ETH Z\"urich, Mathematics Dept., R\"amistrasse 101, 8092 Z\"urich, Switzerland.}
\email{alessio.figalli@math.ethz.ch}

\author[X. Ros-Oton]{Xavier Ros-Oton}
\address{X.R.\textsuperscript{4,5,6} ---
\textsuperscript{4}Universit\"at Z\"urich,
Institut f\"ur Mathematik, 
Winterthurerstrasse 190, 8057 Z\"urich, Switzerland \& 
\textsuperscript{5}ICREA, Pg.\ Llu\'is Companys 23, 08010 Barcelona, Spain \& 
\textsuperscript{6}Universitat de Barcelona, Departament de Matem\`atiques i Inform\`atica, Gran Via de les Corts Catalanes 585, 08007 Barcelona, Spain.}
\email{xavier.ros-oton@math.uzh.ch}

\author[J. Serra]{Joaquim Serra}
\address{J.S. --- ETH Z\"urich, Mathematics Dept., R\"amistrasse 101, 8092 Z\"urich, Switzerland.}
\email{joaquim.serra@math.ethz.ch}

\keywords{}
\subjclass[2010]{35B65, 35B35}
\date{\today}

\thanks{X.C. is supported by grants MTM2017-84214-C2-1-P and MdM-2014-0445 (Government of Spain), and is a member of the research group 2017SGR1392 (Government of Catalonia).\\ A.F. and J.S. have received funding from the European Research Council
under the Grant Agreement No. 721675 ``Regularity and Stability in Partial Differential Equations (RSPDE)''.\\ 
X.R. has received funding from the European Research Council under the Grant Agreement No. 801867 ``Regularity and singularities in elliptic PDE (EllipticPDE)''.\\
Part of this work has been done while X.C. was visiting ETH Z\"urich. 
X.C. would like to thank the FIM
(Institute for Mathematical Research) at ETH Z\"urich for the kind hospitality and for the financial support.
}

\maketitle
{\small
\tableofcontents
}

\section{Introduction}

Given $\Omega\subset \R^n$ a bounded domain and $f:\R\to \R$,
we consider $u:\Omega \to \R$ a solution to the
semilinear equation
\begin{equation}
\label{eq:PDE}
-\Delta u=f(u) \qquad \text{in }\Omega\subset \R^n.
\end{equation}
If we define $F(t):=\int_0^tf(s)\,ds$, then \eqref{eq:PDE} corresponds to the Euler-Lagrange equation for the energy functional
$$
\mathcal E[u]:=\int_{\Omega}\Bigl(\frac{|\nabla u|^2}2-F(u)\Bigr)\,dx.
$$
In other words, $u$ is a critical point of $\mathcal E$, namely 
$$
\frac{d}{d\epsilon}\Big|_{\epsilon=0}\mathcal E[u+\epsilon\xi]=0 \qquad \mbox{ for all }\xi\in C^\infty_c(\Omega) 
$$
(the space of $C^\infty$ functions with compact support in $\Omega$).
Consider the second variation of $\mathcal E$, that when $f\in C^1$ is given by
$$
\frac{d^2}{d\epsilon^2}\Big|_{\epsilon=0}\mathcal E[u+\epsilon\xi]
=\int_{\Omega}\Bigl(|\nabla \xi |^2-f'(u)\xi^2\Bigr)\,dx.
$$
Then, one says that $u$ is a {stable} solution of equation \eqref{eq:PDE} in $\Omega$ if the second variation is nonnegative, namely
\begin{equation*}\label{stability-intro}
\int_{\Omega} f'(u) \xi^2\,dx\leq \int_{\Omega} |\nabla \xi|^2\,dx  \qquad \mbox{ for all }
\xi\in C^\infty_c({\Omega}).
\end{equation*}
{Note that stability of $u$ is considered within the class of functions agreeing with $u$ near the boundary of $\Omega$.}

Our interest is in nonnegative nonlinearities $f$ that grow at $+\infty$ faster than linearly.
{In this case it is well-known that, independently of the Dirichlet boundary conditions that one imposes on \eqref{eq:PDE}, the energy $\mathcal E$ admits no absolute minimizer.}\footnote{{To see this, take $v \in C^1_c(\Omega)$ with $v\geq0$ and $v\not\equiv 0$, and given $M>0$ consider $$
\mathcal E[u+Mv]=\frac{1}{2}\int_\Omega \big|\nabla (u+Mv)\big|^2\,dx - \int_\Omega F(u+Mv)\,dx.
$$ Since $f$ grows superlinearly at $+\infty$, it follows that $F(t)\gg t^2$ for $t$ large. This leads to $\mathcal E[u+Mv]\to -\infty$ as $M\to+\infty$, which shows that the infimum of the energy among all functions with the same boundary data as $u$ is $-\infty$.}
}
However, we will see that in many instances there exist nonconstant stable solutions, such as local minimizers.
The regularity of stable solutions to semilinear elliptic equations is a very classical topic in elliptic equations, initiated in the seminal paper of Crandall and Rabinowitz \cite{CR}, which has given rise to a huge literature on the topic; see the monograph \cite{Dup} for an extensive list of results and references.

Note that this question is a PDE analogue of another fundamental problem in mathematics, namely the regularity of stable minimal surfaces. 
As it is well known, stable minimal surfaces in $\R^n$ may not be smooth in dimension $n$ larger than 7 \cite{Simons, BDG}, and it is a fundamental open problem whether they are smooth in dimension $n\leq 7$. Up to now this question has been solved only in dimension $n=3$ by Fischer-Colbrie and Schoen \cite{FishS} and Do Carmo and Peng \cite{dCP}.

Note that, also in our PDE problem, the dimension plays a key role.
Indeed, when 
\begin{equation}
\label{eq:n10}
n\geq 10, \quad u=\log\frac{1}{|x|^2}, \quad \text{and}\quad f(u)=2(n-2)e^u,
\end{equation}
we are in the presence of a singular $W^{1,2}_0(B_1)$ stable solution of \eqref{eq:PDE} in $\Omega=B_1$ ---as easily shown using Hardy's inequality. 
On the other hand,
\begin{itemize}
\item if $f(t)=e^t$ or $f(t)=(1+t)^p$  with $p>1$,
\item or more in general if $f \in C^2$ is positive, increasing, convex, and $\lim_{t\to +\infty} \frac{f(t)f''(t)}{f'(t)^2}$ exists\footnote{The existence of the limit $c:=\lim_{t\to+\infty}\frac{f(t)f''(t)}{f'(t)^2}\geq 0$ is a rather strong assumption. Indeed, as noticed in \cite{CR}, if it exists then necessarily $c \leq 1$ (otherwise $f$ blows-up in finite time). Now, when $c=1$ the result follows by \cite[Theorem 1.26]{CR}, while $c<1$ implies that $f(t)\leq C(1+t)^p$ for some $p$ and then the result follows by \cite[Lemma 1.17]{CR}.},
\end{itemize}
then it is well-known since the 1970's that $W^{1,2}_0(\Omega)$ stable solutions are 
bounded (and therefore smooth, by classical elliptic regularity theory \cite{GT}) when $n \leq 9$, see \cite{CR}.
Notice that among general solutions (not necessarily stable), an $L^\infty$ bound only holds for subcritical and critical nonlinearities.\footnote{{We recall that a nonlinearity $f$ is called subcritical (resp. critical/supercritical) if 
$|f(t)|\leq C(1+|t|)^p$ for some $p<\frac{n+2}{n-2}$ (resp. $p=\frac{n+2}{n-2}$/resp. $p>\frac{n+2}{n-2}$).
While solutions to subcritical and critical equations are known to be bounded,  in the supercritical case one can easily construct radially decreasing unbounded $W^{1,2}$ solutions.
}}

All these results motivated the following long-standing\footnote{As we shall explain in Section \ref{sect:extremal}, this conjecture is strongly related to an open problem stated by Brezis in the context of ``extremal solutions'' in \cite{Br}.}

\vspace{3mm}

\noindent
\textbf{Conjecture:} {\it Let $u \in W^{1,2}_0(\Omega)$ be a stable solution to \eqref{eq:PDE}.
Assume that $f$ is positive, nondecreasing, convex, and superlinear at $+\infty$, and let $n\leq 9$. Then $u$ is bounded.}

\vspace{3mm}

In the last 25 years, several attempts have been made in order to prove this result. In particular,
partial positive answers to the conjecture above have been given (chronologically):
\begin{itemize}
\item by Nedev, when $n \leq 3$ \cite{Ned00};
\item by Cabr\'e and Capella when $\Omega=B_1$ and $n\leq 9$ \cite{CC06};

\item by Cabr\'e when $n=4$ and $\Omega$ is convex \cite{C10} (see \cite{C19} for an alternative proof);

\item by Villegas when $n=4$ \cite{Vil13};
\item by Cabr\'e and Ros-Oton when $n\leq 7$ and $\Omega$ is a convex domain ``of double revolution''~\cite{CR-O};
\item by Cabr\'e, Sanch\'on, and Spruck when $n=5$ and $\limsup_{t\to +\infty}\frac{f'(t)}{f(t)^{1+\ep}}<+\infty$ for every $\ep>0$~\cite{CSS}.
\end{itemize}

The aim of this paper is to give a full proof of the conjecture stated above.
Actually, as we shall see below, the interior boundedness of solutions requires no convexity or monotonicity of $f$. 
This fact was only known in dimension $n\leq4$, by a result of the first author \cite{C10}.\footnote{In fact for $n\leq 4$, or for $n\leq 9$ in the radial case, the interior boundedness results cited above (as well as the global boundedness in convex domains) do not require the nonnegativeness of $f$.}
In addition, even more surprisingly, both in the interior and in the global settings we can prove that $W^{1,2}$ {\it stable solutions are universally bounded for $n\leq 9$}, namely they are bounded in terms only of their $L^1$ norm, with a constant that is independent of the nonlinearity~$f$.

\subsection{Main results}
In order to prove our result on the regularity of stable solutions up to the boundary we will be forced to work with nonlinearities $f$ that are only locally Lipschitz (and not necessarily $C^1$). 
Hence, it is important for us to extend the definition of stability to this class of nonlinearities. 
For this, we need to choose a precise representative for $f'$.

\begin{definition}\label{defi1.1}
Let $f:\R\to \R$ be a locally Lipschitz function, and let $u \in W^{1,2}(\Omega)$ be a weak solution to \eqref{eq:PDE}, in the sense that $f(u)\in L^1_{\rm loc}(\Omega)$ and
\begin{equation}\label{weak-sol-intro}
\int_\Omega \nabla u\cdot \nabla \varphi\,dx=\int_\Omega f(u)\varphi\,dx\quad \mbox{ for all }\, \varphi\in C^\infty_c(\Omega).
\end{equation}
Then, we say that $u$ is a stable solution in $\Omega$ if $f'_-(u)\in L^1_{\rm loc}(\Omega)$ and
\begin{equation}\label{stabilityLip}
\int_\Omega f'_-(u) \xi^2\,dx\leq \int_{\Omega} |\nabla \xi|^2 \,dx\qquad \mbox{ for all } 
\xi\in C^\infty_c(\Omega),
\end{equation}
where $f'_{-}$ is defined as 
\begin{equation}
\label{eq:f'-}
f'_-(t) : = \liminf_{h\to 0}\frac{f(t+h)-f(t)}{h}\qquad {\rm for}\ t \in \R.
\end{equation} 
\end{definition}

As we shall see later, in our proofs we only use \eqref{stabilityLip} with test functions $\xi$ that vanish in the set $\{|\nabla u|=0\}$.
Hence, as a consequence of Lemma \ref{lem:eq Du}(i), in this situation the notion of stability is independent of the particular representative chosen for $f'$.

Our first main result provides a universal interior a priori bound on the $C^\alpha$ norm of solutions when $n\leq 9$. Actually, in every dimension we can prove also a higher integrability result for the gradient (with respect to the natural energy space $W^{1,2}$).
Since the result is local, we state it in the unit ball.
Also, {because stable solutions $u$ can be approximated by smooth ones (at least when $u\in W^{1,2}_0(\Omega)$ and $f$ is convex; see \cite[Section 3.2.2]{Dup}), 
we shall state the result as an a priori bound assuming that $u$ is smooth.}

\begin{theorem}
\label{thm:L1-CalphaW12}
Let $B_1$ denote the unit ball of $\R^n$. 
Assume that $u\in C^2(B_1)$ is a stable 
solution of
$$
-\Delta u=f(u) \quad \text{in }B_1,
$$
with  $f:\R \to\R$ locally Lipschitz and nonnegative.

Then
\begin{equation}
\label{eq:W12g L1 int}
\|\nabla u\|_{L^{2+\gamma}({B}_{1/2})} \le C\|u\|_{L^1(B_1)},
\end{equation}
where $\gamma>0$ and $C$  are dimensional constants.
In addition, if $n \leq 9$ then
\begin{equation}
\label{eq:Ca L1 int}
\|u\| _{C^\alpha(\overline{B}_{1/2})}\leq C\|u\|_{L^1(B_1)},
\end{equation}
where $\alpha>0$ and $C$  are dimensional constants.
\end{theorem}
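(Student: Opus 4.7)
The plan is to turn the stability inequality into interior integral estimates for $|\nabla u|$, and then to convert these into pointwise control using the equation $-\Delta u=f(u)\ge 0$.

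The starting point is the Bochner-type identity obtained by differentiating $-\Delta u=f(u)$ and taking the trace:
$$
\tfrac{1}{2}\Delta|\nabla u|^{2} \;=\; |D^{2}u|^{2}-f'(u)|\nabla u|^{2}.
$$
Plugging $\xi=\eta|\nabla u|$ (with a standard cut-off near $\{|\nabla u|=0\}$ so the test function is admissible in \eqref{stabilityLip}), expanding the square, and cancelling the $f'(u)$ term using the identity above, yields the Sternberg--Zumbrun inequality
$$
\int_{B_{1}}\bigl(|D^{2}u|^{2}-\bigl|\nabla|\nabla u|\bigr|^{2}\bigr)\eta^{2}\,dx \;\le\; \int_{B_{1}}|\nabla u|^{2}|\nabla\eta|^{2}\,dx,\qquad \eta\in C^{1}_{c}(B_{1}).
$$
On $\{|\nabla u|>0\}$ the left-hand integrand splits as $|A|^{2}|\nabla u|^{2}+\bigl|\nabla_{T}|\nabla u|\bigr|^{2}$, where $A$ is the second fundamental form of the level sets of $u$ and $\nabla_{T}$ denotes the tangential gradient along them; this is the geometric content the whole argument will exploit.

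For the gradient bound \eqref{eq:W12g L1 int} (valid in every dimension), the idea is to combine the Sternberg--Zumbrun inequality with the Michael--Simon Sobolev inequality applied on a.e.\ smooth level set $\{u=t\}$ (Sard) to a function of the form $\eta^{2}|\nabla u|^{\beta}$, and then to integrate in $t$ using the coarea formula $dx=|\nabla u|^{-1}d\mathcal H^{n-1}\,dt$. The outcome is a Caccioppoli-type estimate with a Sobolev improvement, giving $\|\nabla u\|_{L^{2+\gamma}(B_{1/2})}\le C\|\nabla u\|_{L^{2}(B_{3/4})}$ for some dimensional $\gamma>0$. The right-hand side is bounded by $\|u\|_{L^{1}(B_{1})}$ through a standard energy estimate for \eqref{eq:PDE} together with the fact that, since $-\Delta u\ge 0$, classical potential theory for superharmonic functions controls $\|u\|_{L^{\infty}(B_{3/4})}$ (and in particular $\|f(u)\|_{L^{1}(B_{3/4})}$) by $\|u\|_{L^{1}(B_{1})}$.

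For the $C^{\alpha}$ bound \eqref{eq:Ca L1 int} when $n\le 9$, I would upgrade the previous estimates to a Morrey-type decay
$$
\int_{B_{r}(x_{0})}|\nabla u|^{2}\,dx \;\le\; C\,r^{\,n-2+2\alpha}\,\|u\|_{L^{1}(B_{1})}^{2},\qquad x_{0}\in B_{1/2},\; r\le 1/4,
$$
by iterating the Sternberg--Zumbrun inequality against carefully chosen cut-offs at all scales, and then conclude via the Morrey/Campanato embedding. The dimensional restriction $n\le 9$ must enter precisely at this iteration step: the singular stable example $u=\log(1/|x|^{2})$ for $f(u)=2(n-2)e^{u}$ in dimension $10$ shows that the positive gain $\alpha>0$ can only be extracted when a certain Hardy-type constant dominates $(n-2)^{2}/4$, which happens exactly for $n\le 9$.

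The main obstacle, and the novel content, is precisely this sharp threshold. Classical test-function choices with $\xi=\eta|\nabla u|$ are known to yield the analogous statement only up to $n=4$ in general domains. I expect that reaching $n=9$ requires exploiting more than the scalar Sternberg--Zumbrun inequality: plausibly a two-point or averaged form of the stability inequality (integrating \eqref{stabilityLip} against a kernel depending on a second point $y\in B_{1}$), or a finer use of the geometric $|A|^{2}|\nabla u|^{2}$ term combined with a Hardy--Rellich inequality on $S^{n-1}$, so that the critical dimension where the radial profile $\log(1/|x|^{2})$ becomes borderline stable coincides with the dimension where the extracted Morrey exponent stops being positive.
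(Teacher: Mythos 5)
Your proposal has a genuine gap at its core: for the $C^\alpha$ estimate you correctly observe that the Sternberg--Zumbrun inequality with $\xi=|\nabla u|\eta$ is only known to reach $n=4$, but you then merely speculate about what replaces it (``a two-point or averaged form of the stability inequality \dots or a Hardy--Rellich inequality on $S^{n-1}$''), and neither guess is the mechanism that actually works. The key idea of the proof is to test stability with $\xi=(x\cdot\nabla u)\,|x|^{(2-n)/2}\zeta$, i.e.\ with the \emph{radial derivative} weighted by the power $|x|^{-a/2}$ at the critical exponent $a=n-2$. This produces the coefficient $\tfrac14(n-2)(10-n)$ in front of $\int_{B_\rho}|x|^{-n}|x\cdot\nabla u|^2\,dx$, controlled by $\rho^{2-n}\int_{B_{3\rho/2}\setminus B_\rho}|\nabla u|^2\,dx$; this is exactly where $n\le 9$ enters, not in an iteration of Sternberg--Zumbrun. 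Moreover, even granting this inequality, it alone does \emph{not} imply boundedness (there are unbounded superharmonic solutions of semilinear equations satisfying it, e.g.\ functions of two variables in $\R^3$; see Remark~\ref{rem22}). A second, independent ingredient is needed: a compactness argument showing that, whenever $\int_{B_\rho}|\nabla u|^2$ satisfies a doubling condition, the radial derivative controls the full gradient on an annulus. That argument uses the $W^{1,2+\gamma}$ bound, strong $W^{1,2}$ compactness of superharmonic functions, and the nonexistence of nontrivial $0$-homogeneous superharmonic functions, followed by a dyadic iteration lemma. None of this is present, or even foreshadowed, in your outline.

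There is also a concrete error in your argument for \eqref{eq:W12g L1 int}: you claim that since $-\Delta u\ge 0$, potential theory controls $\|u\|_{L^\infty(B_{3/4})}$ by $\|u\|_{L^1(B_1)}$. Superharmonic functions are locally bounded \emph{below} by their averages, not above ($|x|^{2-n}$ is superharmonic, unbounded, and integrable), so this step fails. The paper instead closes the estimate $\|\nabla u\|_{L^2(B_{1/2})}\le C\|u\|_{L^1(B_1)}$ by interpolating $\|\nabla u\|_{L^2}$ between $\|\nabla u\|_{L^1}$ (controlled by $\|u\|_{L^1}$ via the measure bound on $-\Delta u$) and $\|\nabla u\|_{L^{2+\gamma}}$, and then absorbing the resulting $\ep\|\nabla u\|_{L^2(B_1)}^2$ term at all scales via an abstract covering lemma. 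Your route to the higher integrability itself (Michael--Simon on level sets plus coarea) is plausible in spirit but differs from the paper's, which avoids Michael--Simon and instead bounds $\int_{B_1}|\mathrm{div}(|\nabla u|\nabla u)|\eta^2$, deduces $\int_{\{u=t\}\cap B_{3/4}}|\nabla u|^2\,d\HH^{n-1}\le C$ for a.e.\ $t$, and interpolates with the coarea formula; as written, your version is too schematic to assess.
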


\begin{remark}
As mentioned before, it is remarkable that the interior estimates hold with bounds that are {\it independent} of the nonlinearity $f$. 
Note that, also in the global regularity result Theorem \ref{thm:globalCalpha}, we can prove a bound independent of $f$.
\end{remark}

Combining the previous interior bound with the moving planes method, we obtain a universal bound on $u$ when $\Omega$ is convex.
\begin{corollary}
\label{thm:convex}
Let $n \leq 9$ and let $\Omega\subset \R^n$ be any bounded convex $C^1$ domain. 
Assume that $f: \R\to \R$ is locally Lipschitz and nonnegative.
Let $u\in C^0(\overline\Omega)\cap C^2(\Omega)$ be a stable solution of
$$
\left\{
\begin{array}{cl}
-\Delta u=f(u) & \text{in }\Omega\\
u=0 & \text{on }\partial\Omega.
\end{array}
\right.
$$

Then there exists a constant $C$, depending only on $\Omega$, such that
\begin{equation}
\label{eq:boundary bdd}
\|u\|_{L^\infty(\Omega)}\leq C  \|u\|_{L^1(\Omega)}.
\end{equation}
\end{corollary}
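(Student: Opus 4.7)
The plan is to combine the interior a priori bound \eqref{eq:Ca L1 int} of Theorem~\ref{thm:L1-CalphaW12} with the moving planes method of Gidas--Ni--Nirenberg adapted to convex $C^1$ domains, using the convexity of $\Omega$ to reduce a pointwise estimate near $\partial\Omega$ to a pointwise estimate at a point lying at definite distance from $\partial\Omega$.

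First, observe that since $f\geq 0$ we have $-\Delta u=f(u)\geq 0$ in $\Omega$, so the maximum principle together with $u=0$ on $\partial\Omega$ gives $u\geq 0$ in $\Omega$ (and $u>0$ in $\Omega$ by the strong maximum principle, unless $u\equiv 0$, in which case the result is trivial). Next, for any $x_0\in\Omega$ with $r:=\dist(x_0,\partial\Omega)>0$, the rescaled function $u_{x_0,r}(y):=u(x_0+ry)$ solves $-\Delta u_{x_0,r}=r^2 f(u_{x_0,r})$ in $B_1$ and remains stable with respect to the rescaled nonlinearity $r^2f$. Applying \eqref{eq:Ca L1 int} (note that we are using $n\leq 9$ here) and rescaling back, we obtain
\begin{equation}\label{interior-bound-rescaled}
\|u\|_{L^\infty(B_{r/2}(x_0))}\leq C\,r^{-n}\|u\|_{L^1(B_r(x_0))}\leq C\,r^{-n}\|u\|_{L^1(\Omega)}.
\end{equation}

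Second, I would carry out the moving planes argument on the convex $C^1$ domain $\Omega$. By the classical Gidas--Ni--Nirenberg scheme (and its refinement for $C^1$ domains by Berestycki--Nirenberg), the strict convexity of $\partial\Omega$ in a $C^1$ sense at each boundary point yields the following conclusion: there exists $\delta>0$, depending only on $\Omega$, such that for every $x\in\Omega$ with $\dist(x,\partial\Omega)<\delta$ one can find a hyperplane $H$ (which may depend on $x$) and a point $x^*\in\Omega$ obtained by reflecting $x$ across $H$, such that
\begin{equation}\label{reflection-bound}
u(x)\leq u(x^*)\qquad\text{and}\qquad \dist(x^*,\partial\Omega)\geq\delta.
\end{equation}
More concretely, the moving planes procedure is applied with respect to every direction $e$ at the boundary, producing a ``critical cap'' of uniform thickness $\delta=\delta(\Omega)$ in which $u$ is monotone in the direction of reflection; each point $x$ in this cap admits a reflected counterpart $x^*$ in the interior of $\Omega$ with $u(x)\leq u(x^*)$.

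Third, combining \eqref{interior-bound-rescaled} applied at points with $\dist(\cdot,\partial\Omega)\geq\delta$ with the reflection inequality \eqref{reflection-bound}, we conclude
\begin{equation*}
\|u\|_{L^\infty(\Omega)}=\sup_{\dist(\cdot,\partial\Omega)\geq\delta}u\leq C\,\delta^{-n}\|u\|_{L^1(\Omega)}=C(\Omega)\|u\|_{L^1(\Omega)},
\end{equation*}
which is \eqref{eq:boundary bdd}.

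The main obstacle, and the step that requires the most care, is the moving planes argument producing a \emph{uniform} $\delta=\delta(\Omega)$ that depends only on the geometry of $\Omega$ and not on $u$ or $f$. This is where the $C^1$ regularity and the convexity of $\partial\Omega$ enter crucially: convexity ensures that reflected caps lie inside $\Omega$, while the $C^1$ property ensures that at every boundary point one has enough transversality of $\partial\Omega$ with respect to moving hyperplanes to slide them a definite distance. The rest of the argument is essentially a straightforward combination of rescaling and Theorem~\ref{thm:L1-CalphaW12}.
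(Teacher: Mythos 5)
Your proposal is correct and follows essentially the same route as the paper: nonnegativity from the maximum principle, the moving planes method on the convex $C^1$ domain to control $u$ in a boundary strip of uniform width $\delta(\Omega)$ by its values at points of distance $\geq \delta$ from $\partial\Omega$, and the interior estimate \eqref{eq:Ca L1 int} (rescaled) to bound those interior values by $C\|u\|_{L^1(\Omega)}$. The only cosmetic difference is that the paper phrases the moving planes conclusion as $u(x)\leq\max_{\Gamma_0}u$ on the strip rather than via an explicit reflected point $x^*$, which avoids having to justify that $x^*$ itself lies at distance $\geq\delta$ from the boundary.
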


We now state our second main result, which concerns the global regularity of stable solutions in general $C^3$ domains when the nonlinearity is convex and nondecreasing.
As we shall explain in the next section, this result completely solves two open problems posed by Brezis and Brezis-V\'azquez in \cite{Br,BV}.
Again, we work with classical solutions and prove an a priori estimate.
{In this case it is crucial for us to assume $f$ to be convex and nondecreasing. Indeed, the proof of regularity up to the boundary will rely on a very general closedness result for stable solutions with convex nondecreasing nonlinearities, that we prove in Section~\ref{sect:compact}.}

\begin{theorem}\label{thm:globalCalpha}
Let  $\Omega\subset \R^n$ be a bounded domain of class $C^3$. 
Assume that $f: \R \to \R$ is nonnegative, nondecreasing, and convex. 
Let $u\in C^0(\overline\Omega) \cap C^2(\Omega)$ be a stable solution of 
$$
\left\{
\begin{array}{cl}
-\Delta u=f(u) & \text{in }\Omega\\
u=0 & \text{on }\partial\Omega.
\end{array}
\right.
$$

Then
\begin{equation}
\label{eq:W12g L1 glob}
\|\nabla u\|_{L^{2+\gamma}(\Omega)}\le C\|u\|_{L^1(\Omega)},
\end{equation}
where $\gamma>0$ is a dimensional constant and $C$  depends only on $\Omega$.
In addition, if $n \leq 9$ then
\begin{equation}
\label{eq:C0a L1 glob}
\|u\| _{C^\alpha(\overline\Omega)} \leq C\|u\|_{L^1(\Omega)},
\end{equation}
where $\alpha>0$ is a dimensional constant and $C$ depends only on $\Omega$.
\end{theorem}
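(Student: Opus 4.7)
The plan is to reduce the global regularity claimed in \tref{globalCalpha} to the interior estimate of \tref{L1-CalphaW12} by controlling $u$ in a boundary layer separately, via a combination of the moving-planes method and the closedness result for stable solutions with convex nondecreasing nonlinearities announced for Section~\ref{sect:compact}.

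My first step is to use the closedness result to reduce to the case of $C^\infty$ nonlinearities (and hence $u\in C^\infty(\overline\Omega)$), by approximating the given convex nondecreasing $f$ from below by smooth positive nondecreasing convex functions $f_k$, solving the associated stable Dirichlet problems, and passing to the limit in the a priori bounds. This is precisely the step at which the convexity and monotonicity of $f$ are essential: they ensure that the approximating stable solutions converge to a stable solution of the original problem. After this reduction the remaining arguments can be carried out without regularity concerns.

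Next I would establish a boundary-strip bound via a localized Gidas--Ni--Nirenberg moving-planes argument, in the spirit of de~Figueiredo--Lions--Nussbaum: since $\partial\Omega\in C^3$ and $f$ is Lipschitz on bounded sets (being convex on $\R$), there exist $\delta_0=\delta_0(\Omega)>0$ and a bi-Lipschitz reflection $\sigma$ from $\Omega_{\delta_0}:=\{x\in\Omega:\dist(x,\partial\Omega)<\delta_0\}$ into $\Omega\setminus\Omega_{2\delta_0}$ such that
\[
0\le u(x)\le u\bigl(\sigma(x)\bigr)\qquad \text{for every }x\in\Omega_{\delta_0},
\]
where $u\ge 0$ follows from the maximum principle ($f\ge 0$, $u=0$ on $\partial\Omega$). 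Every point of the complementary set $\Omega\setminus\Omega_{2\delta_0}$ admits a ball of radius $\delta_0$ contained in $\Omega$, so \tref{L1-CalphaW12} applied on such balls furnishes both the local $C^\alpha$ bound (when $n\le 9$) and the local $W^{1,2+\gamma}$ bound (in every dimension) with constants depending only on $\Omega$ and $n$. Combined with the strip inequality this yields $\|u\|_{L^\infty(\Omega)}\le C(\Omega)\|u\|_{L^1(\Omega)}$ when $n\le 9$.

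To upgrade the $L^\infty$ bound to a full $C^\alpha(\overline\Omega)$ estimate with constant independent of $f$, I would combine the universal interior $C^\alpha$ estimate of \tref{L1-CalphaW12} with a boundary barrier: since $u\ge 0$ is superharmonic and vanishes on $\partial\Omega\in C^3$, comparison with a multiple of the regularized distance function yields $0\le u(x)\le C(\Omega)\|u\|_{L^\infty(\Omega)}\dist(x,\partial\Omega)$ in a boundary strip, and interpolating this Lipschitz boundary decay with the interior Hölder estimate gives the desired $C^\alpha(\overline\Omega)$ bound. For the $W^{1,2+\gamma}$ estimate in every dimension I would instead flatten $\partial\Omega$ locally via a $C^3$ diffeomorphism $\Phi$ and extend the pullback $u\circ\Phi$ by odd reflection across the straightened boundary; up to variable coefficients controlled by $\|\Phi\|_{C^3}$, the reflected configuration is amenable to the interior $W^{1,2+\gamma}$ estimate of \tref{L1-CalphaW12} on a full ball, producing the missing near-boundary bound, which summed over a finite cover yields the global estimate.

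The main obstacle is to preserve the $f$-independence of the constant throughout the boundary analysis: a naive boundary Schauder estimate would introduce dependence on $\|f(u)\|_{L^\infty}$, hence on $f$. The remedy is to rely only on ingredients that are intrinsically $f$-independent, namely the universal interior bound of \tref{L1-CalphaW12}, the moving-planes reflection (which requires only Lipschitz $f$), the superharmonic boundary barrier (which uses only $f\ge 0$ and $u=0$ on $\partial\Omega$), and the closedness result of Section~\ref{sect:compact} that lets us transfer estimates from smooth to rough nonlinearities.
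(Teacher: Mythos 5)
There is a genuine gap, and it sits at the heart of your argument: the moving-planes boundary-strip bound $0\le u(x)\le u(\sigma(x))$ is only available when $\Omega$ is convex (or near convex portions of $\partial\Omega$). For a general bounded $C^3$ domain there is no direction in which the plane can be moved past a concave boundary point while keeping the reflected cap inside $\Omega$, and the standard fix (Kelvin transform, as in de~Figueiredo--Lions--Nussbaum) changes the nonlinearity and only works under growth restrictions on $f$ --- which would destroy the $f$-independence you need and exclude the supercritical nonlinearities that are the whole point here. In fact, the convex case of your scheme is exactly Corollary~\ref{thm:convex} of the paper; Theorem~\ref{thm:globalCalpha} is precisely the statement that one can dispense with convexity of $\Omega$, and for that the paper builds an entirely different boundary theory: a Pohozaev-type boundary estimate and a boundary $W^{1,2+\gamma}$ bound (Propositions \ref{prop:W12+epbdry} and \ref{prop:L1controlsW12bdry}), a boundary version of the weighted stability inequality (Lemma \ref{lem:x Du def}), a Liouville theorem in the half-space (Proposition \ref{classification}), and a blow-up/compactness argument glued together by the closedness result of Section~\ref{sect:compact}. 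Your use of that closedness result only as an approximation device misses its actual role, which is to pass to the limit in the blow-up at the boundary.

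Two further steps would also fail as written. First, the barrier bound $u\le C\|u\|_{L^\infty}\dist(\cdot,\partial\Omega)$ does not follow from superharmonicity: superharmonicity of $u$ gives lower bounds near $\partial\Omega$ (Hopf), while an upper linear-decay bound requires comparison with a supersolution of $-\Delta w\ge f(u)$, i.e.\ a bound on $\|f(u)\|_{L^\infty}$, reintroducing dependence on $f$; the paper instead derives the decay $u\lesssim \dist(\cdot,\partial\Omega)^{\alpha_n/2}\|u\|_{L^1}$ from the Liouville theorem. Second, the odd reflection of $u\circ\Phi$ across the flattened boundary solves an equation whose nonlinearity is $-f(-t)\le 0$ on the reflected side and whose principal part has variable (merely Lipschitz) coefficients, so it is neither superharmonic nor a stable solution of a semilinear equation for the Laplacian on a full ball; Theorem~\ref{thm:L1-CalphaW12} therefore cannot be applied to it.
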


As an immediate consequence of such a priori estimates, we will prove the long-standing conjecture stated above.

\begin{corollary}\label{thm:conjecture}
Let $\Omega\subset \R^n$ be any bounded domain of class $C^3$. 
Assume that $f: \R \to \R$ is  nonnegative, nondecreasing, convex, and satisfies
\[\frac{f(t)}{t}\geq \sigma(t)\longrightarrow +\infty\quad \mbox{as}\quad t\to+\infty\]
for some function $\sigma:\R\to \R$. 
Let $u\in W^{1,2}_0(\Omega)$ be any stable weak solution of \eqref{eq:PDE} and assume that $n \leq 9$. 
Then
\[\|u\|_{L^\infty(\Omega)}\leq C,\]
where $C$ is a constant depending only on $\sigma$ and $\Omega$.
\end{corollary}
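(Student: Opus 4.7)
The strategy is to combine the a priori global bound of Theorem~\ref{thm:globalCalpha} with a soft $L^1$ bound for $u$ derived by testing the equation against the first Dirichlet eigenfunction of $-\Delta$ on $\Omega$. Since Theorem~\ref{thm:globalCalpha} is stated for classical $C^2$ solutions, I first reduce to that setting. Using the approximation of stable $W^{1,2}_0$ solutions with convex, nondecreasing nonlinearities by classical stable solutions of nearby problems (mentioned in the discussion preceding Theorem~\ref{thm:globalCalpha}; see also \cite[Section~3.2.2]{Dup} and the closedness result that the paper proves in Section~7), I pick a sequence $\{u_k\}\subset C^0(\overline\Omega)\cap C^2(\Omega)$ of classical stable solutions of $-\Delta u_k=f_k(u_k)$ in $\Omega$ with $u_k=0$ on $\partial\Omega$, where each $f_k\ge 0$ is convex and nondecreasing and $u_k\to u$ in $L^1(\Omega)$. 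By the maximum principle $u_k\ge 0$. Applying Theorem~\ref{thm:globalCalpha} to each $u_k$ gives
\begin{equation*}
\|u_k\|_{L^\infty(\Omega)}\le\|u_k\|_{C^\alpha(\overline\Omega)}\le C_\Omega\,\|u_k\|_{L^1(\Omega)},
\end{equation*}
so the whole problem reduces to showing $\|u_k\|_{L^1(\Omega)}\le C(\sigma,\Omega)$ uniformly in $k$.

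For the $L^1$ bound, let $\phi_1>0$ be the first Dirichlet eigenfunction of $-\Delta$ on $\Omega$, with eigenvalue $\lambda_1=\lambda_1(\Omega)$. The superlinearity assumption furnishes $M=M(\sigma,\Omega)$ with $f(t)\ge 2\lambda_1 t$ for every $t\ge M$, which combined with $f\ge 0$ gives $f(t)\ge 2\lambda_1 t-2\lambda_1 M$ for all $t\ge 0$. Testing the equation $-\Delta u=f(u)$ against $\phi_1$ (admissible since $f(u)\in L^1_{\mathrm{loc}}$ and $u,\phi_1\in W^{1,2}_0(\Omega)$) yields
\begin{equation*}
\lambda_1\int_\Omega u\,\phi_1\,dx=\int_\Omega f(u)\,\phi_1\,dx\ge 2\lambda_1\int_\Omega u\,\phi_1\,dx-2\lambda_1 M\int_\Omega\phi_1\,dx,
\end{equation*}
so $\int_\Omega u\,\phi_1\,dx\le 2M\int_\Omega\phi_1\,dx$. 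To upgrade this to a full $L^1$ bound I use the Green's function representation $u(x)=\int_\Omega G(x,y)f(u(y))\,dy$, together with the Hopf-lemma estimate $\int_\Omega G(x,y)\,dx\le C_\Omega\,\phi_1(y)$ and Fubini, to conclude
\begin{equation*}
\|u\|_{L^1(\Omega)}\le C_\Omega\int_\Omega\phi_1\,f(u)\,dx=C_\Omega\lambda_1\int_\Omega u\,\phi_1\,dx\le C(\sigma,\Omega).
\end{equation*}

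Running the same argument on each $u_k$, with $f_k$ chosen to preserve superlinearity (for instance by extending $f|_{[0,k]}$ above height $k$ by the line of slope $f'_-(k)$, which keeps $f_k$ nonnegative, convex, nondecreasing, and with the same superlinearity as $f$), gives $\|u_k\|_{L^1(\Omega)}\le C(\sigma,\Omega)$; combined with the previous step this yields $\|u_k\|_{L^\infty(\Omega)}\le C(\sigma,\Omega)$ uniformly in $k$, and passing to the limit delivers $\|u\|_{L^\infty(\Omega)}\le C(\sigma,\Omega)$, as claimed. The main obstacle I expect is the approximation step: one has to produce classical stable approximants $u_k$ of $u$ whose nonlinearities are convex, nondecreasing, and retain enough superlinearity for the $L^1$ bound to apply uniformly. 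The convex piecewise-linear truncation just described, together with the closedness/compactness of stable solutions under $L^1$ limits with convex nondecreasing nonlinearities that is established by the authors in Section~7, should handle this point.
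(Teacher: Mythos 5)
Your proposal is correct and follows essentially the same route as the paper: approximate $u$ by classical stable solutions via \cite[Section 3.2.2]{Dup} (the paper uses the nonlinearities $(1-\varepsilon)f$, which trivially preserve superlinearity, rather than your piecewise-linear truncation), apply the a priori bound of Theorem~\ref{thm:globalCalpha}, and control the $L^1$ norm uniformly by testing against the first eigenfunction together with a Green's function/boundary Harnack comparison --- which is precisely the content of Proposition~\ref{prop:L1} that the paper invokes. No substantive difference.
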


The key point here is to prove the bounds for classical solutions (Theorem \ref{thm:globalCalpha}).
Once this is done, a well known approximation argument (see \cite[Theorem 3.2.1 and Corollary~3.2.1]{Dup}) shows that the same bounds \eqref{eq:W12g L1 glob}-\eqref{eq:C0a L1 glob} hold for every $W^{1,2}_0(\Omega)$ stable weak solution~$u$.
Finally, to control $\|u\|_{L^1(\Omega)}$ in \eqref{eq:W12g L1 glob}, we use Proposition \ref{prop:L1}.

\subsection{Application: $W^{1,2}_0$ and $L^\infty$ regularity of extremal solutions} 
\label{sect:extremal}
Let $f:[0,+\infty)\to \R$ satisfy $f(0)>0$ and be nondecreasing, convex, and superlinear at $+\infty$ in the sense that 
\[\lim_{t\to+\infty}\frac{f(t)}{t}=+\infty.\]
Given a constant $\lambda > 0$ consider the nonlinear elliptic problem
\begin{equation}
\label{eq:lambda}
\left\{
\begin{array}{cl}
-\Delta u=\lambda f(u) & \text{in }\Omega\\
u>0 & \text{in }\Omega\\
u=0 & \text{on }\partial\Omega,
\end{array}
\right.
\end{equation}
where $\Omega\subset \R^n$ is a smooth bounded domain.
We say that $u$ is a classical solution if $u \in C^0(\overline\Omega)\cap C^2(\Omega)$.

In the literature, this problem is usually referred to as the ``Gelfand problem'', or a ``Gelfand-type problem''.
It was first presented by Barenblatt in a volume edited by Gelfand \cite{Gelf}, and was motivated by problems occurring in combustion\footnote{Originally, Barenblatt introduced problem \eqref{eq:lambda} for the exponential nonlinearity $f(u)=e^u$ (arising as an approximation of a certain empirical law). Nowadays, the terminology of Gelfand or Gelfand-type problem applies to all $f$ satisfying the assumptions above.}. 
Later, it was studied by a series of authors; see for instance  \cite{Br,BV,Dup,C17} for a complete account on this topic.

The basic results concerning \eqref{eq:lambda} can be summarized as follows
(see for instance \cite[Theorem 1 and Remark 1]{Br} or the book \cite{Dup} by Dupaigne):

\begin{theorem}[see \cite{Br,BV,Dup}]\label{thm:lambda star}
There exists a constant $\lambda^\star \in (0,+\infty)$ such that:
\begin{itemize}
\item[(i)] For every $\lambda \in (0,\lambda^\star)$ there is a unique $W^{1,2}_0(\Omega)$ stable solution $u_\lambda$ of \eqref{eq:lambda}. 
Also, $u_\lambda$ is a classical solution and $u_\lambda<u_{\lambda'}$ for $\lambda<\lambda'$.

\item[(ii)] For every $\lambda >\lambda^\star$ there is no classical solution.

\item[(iii)] For $\lambda=\lambda^\star$ there exists a unique $L^1$-weak solution $u^\star$, in the following sense: $u^\star \in L^1(\Omega)$, $f(u^\star){\rm dist}(\cdot,\partial \Omega)\in L^1(\Omega)$, and
$$
-\int_\Omega u^\star \Delta \zeta\,dx=\lambda^\star\int_{\Omega}f(u^\star)\zeta\,dx \qquad \text{for all }\zeta \in C^{2}(\overline\Omega)\text{ with } \zeta_{|\partial\Omega}= 0.
$$
This solution is called the {\em extremal solution} of  \eqref{eq:lambda}
and satisfies $u_\lambda\uparrow u^\star$ as $\lambda\uparrow \lambda^\star$.
\end{itemize}
\end{theorem}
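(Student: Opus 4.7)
Set $\lambda^\star := \sup\{\lambda > 0 : \eqref{eq:lambda} \text{ admits a classical solution}\}$; then $(ii)$ holds by definition, and the plan is to verify $(i)$ and $(iii)$ after establishing $0 < \lambda^\star < +\infty$. For $\lambda^\star > 0$: since $f(0) > 0$, the function $0$ is a strict subsolution for every $\lambda > 0$, and for small $\lambda$ the function $M w$---where $w$ solves $-\Delta w = 1$ with zero boundary data and $M$ is chosen so that $\lambda f(M\|w\|_\infty) \leq M$---is a supersolution, so monotone iteration produces a classical solution. For $\lambda^\star < +\infty$: testing $-\Delta u = \lambda f(u)$ against the first Dirichlet eigenfunction $\varphi_1 > 0$ of $-\Delta$ with eigenvalue $\mu_1 > 0$ gives
\[
\mu_1 \int_\Omega u\, \varphi_1\,dx \;=\; \lambda \int_\Omega f(u)\, \varphi_1\,dx,
\]
and the superlinearity of $f$ (together with $f(0) > 0$) yields $\lambda f(t) \geq 2\mu_1 t + \delta$ for all $t \geq 0$ once $\lambda$ is large enough, contradicting this identity.

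For $\lambda \in (0,\lambda^\star)$ I construct the minimal classical solution $u_\lambda$ by monotone iteration $u_{k+1} := (-\Delta)^{-1}[\lambda f(u_k)]$ starting from $u_0 = 0$, with the sequence trapped above by any classical solution $v$ corresponding to some $\lambda' \in (\lambda,\lambda^\star)$ (which is a strict supersolution for parameter $\lambda$); classical elliptic estimates allow passage to the limit. Minimality of $u_\lambda$ forces its stability: otherwise the first Dirichlet eigenvalue of $-\Delta - \lambda f'(u_\lambda)$ would be negative, and perturbing $u_\lambda$ along the corresponding eigenfunction would yield a strictly smaller classical supersolution, contradicting minimality. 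Monotonicity in $\lambda$ is then immediate from the iteration scheme. For uniqueness of $W^{1,2}_0$-stable solutions at fixed $\lambda$: if $u$ is another such, then $w := u - u_\lambda \geq 0$ by minimality, and convexity of $f$ gives $-\Delta w \leq \lambda f'(u)\, w$; testing against $w$ and combining with the stability of $u$ forces equality throughout, so $w \equiv 0$ by the strong maximum principle.

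For $\lambda = \lambda^\star$, set $u^\star(x) := \lim_{\lambda \uparrow \lambda^\star} u_\lambda(x)$, well defined by monotonicity. Testing against $\varphi_1$ and using superlinearity of $f$ forces both $\int_\Omega u_\lambda \varphi_1\,dx$ and $\int_\Omega f(u_\lambda)\varphi_1\,dx$ to stay uniformly bounded as $\lambda \uparrow \lambda^\star$; since $\varphi_1 \asymp \mathrm{dist}(\cdot,\partial\Omega)$, monotone convergence yields $u^\star \in L^1(\Omega)$ and $f(u^\star)\,\mathrm{dist}(\cdot,\partial\Omega) \in L^1(\Omega)$. For test functions $\zeta \in C^2(\overline\Omega)$ with $\zeta|_{\partial\Omega} = 0$ one has $|\zeta| \leq C\,\mathrm{dist}(\cdot,\partial\Omega)$, so dominated convergence lets me pass to the limit in the weak formulation, yielding that $u^\star$ is an $L^1$-weak solution at $\lambda^\star$. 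Uniqueness of such solutions at the critical parameter follows from the classical duality argument of Martel, which uses the criticality of $\lambda^\star$ through the nonexistence of positive supersolutions to the linearized problem.

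The main obstacle is this last step: upgrading pointwise monotone convergence to integrability of $u^\star$ and $f(u^\star)\,\mathrm{dist}(\cdot,\partial\Omega)$ via the $\varphi_1$-weighted identity, and proving uniqueness of the $L^1$-weak solution at $\lambda^\star$, which is a genuinely delicate consequence of the criticality of this parameter and is what distinguishes the extremal regime from the subcritical one.
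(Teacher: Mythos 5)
The paper does not actually prove this theorem: it is quoted from the literature (Brezis, Brezis--V\'azquez, Dupaigne's monograph, and Martel's paper for the uniqueness in (iii)), so there is no internal argument to compare against. Your outline follows the standard route of those references --- the sub/supersolution construction for $\lambda^\star>0$, the first-eigenfunction test for $\lambda^\star<+\infty$, the monotone iteration producing the minimal solution, and the $\varphi_1$-weighted identity plus monotone convergence for part (iii) --- and deferring the uniqueness of the weak solution at $\lambda=\lambda^\star$ to Martel is exactly what the paper itself does.

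The one step that does not work as written is the uniqueness of the stable solution for fixed $\lambda<\lambda^\star$. From $w:=u-u_\lambda\ge0$ and convexity you get $-\Delta w\le\lambda f'_-(u)\,w$, and testing with $w$ together with the stability of $u$ yields equality in
\[
\int_\Omega|\nabla w|^2\,dx\;\le\;\lambda\int_\Omega f'_-(u)\,w^2\,dx\;\le\;\int_\Omega|\nabla w|^2\,dx .
\]
But this equality does not ``force $w\equiv0$ by the strong maximum principle''. What it forces is $\int_\Omega\bigl[f'_-(u)\,w-(f(u)-f(u_\lambda))\bigr]w\,dx=0$ with a nonnegative integrand; hence, if $w\not\equiv0$ (so $w>0$ by superharmonicity of $w$), then $f$ is affine on the interval $[u_\lambda(x),u(x)]$ for a.e.\ $x$, and this degenerate case must still be excluded. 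The standard fix --- which is precisely the argument the paper runs in Step 3 of the proof of Proposition \ref{prop:approximation} --- is to note that these intervals have length bounded below on compact subsets, that $W^{1,2}$ functions cannot jump so the intervals cover all of $(0,\sup_\Omega u)$, and therefore $f(t)=at+b$ there with $b\geq f(0)>0$; then $-\Delta w=\lambda a w$ with $0\le w\in W^{1,2}_0(\Omega)$, $w\not\equiv 0$, forces $\lambda a=\lambda_1(\Omega)$ (the first Dirichlet eigenvalue), and testing $-\Delta u_\lambda=\lambda a u_\lambda+\lambda b$ against the first eigenfunction gives $\lambda b\int_\Omega\varphi_1\,dx=0$, a contradiction. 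Without this extra argument the uniqueness claim in (i) is not established. (A smaller, fixable imprecision of the same flavor occurs in your stability-of-the-minimal-solution step, where one also needs $u_\lambda-\epsilon\phi\ge0$, via Hopf's lemma, before the comparison with the monotone iteration can be invoked.)
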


The uniqueness of weak solution for $\lambda=\lambda^\star$ is a delicate result that was proved by Martel \cite{Martel}.

In \cite[Open problem 1]{Br}, Brezis asked the following:
 
\vspace{3mm}

\noindent
\textbf{Open problem 1:} {\it Is there something ``sacred'' about dimension 10? More precisely, is it possible in ``low'' dimensions to construct some $f$ (and some $\Omega$) for which the extremal solution $u^\star$ is unbounded? Alternatively, can one prove in ``low'' dimension that $u^\star$ is
smooth for every $f$ and every $\Omega$? }

\vspace{3mm}

To connect this to the conjecture stated before, note that Brezis' problem can be thought as an a priori bound for the stable solutions $\{u_\lambda\}_{\lambda<\lambda^\star}$.
Hence, understanding the regularity of extremal
 solutions is equivalent to understanding a priori estimates for stable classical solutions.

Note that, a priori, extremal solutions are merely in $L^1(\Omega)$.
It is then natural to ask whether extremal solutions do belong to the natural energy space $W^{1,2}_0(\Omega)$. This important question was posed by Brezis and V\'azquez in \cite[Open problem 1]{BV}:

\vspace{3mm}

\noindent
\textbf{Open problem 2:} {\it Does there exist some $f$ and $\Omega$ for which the extremal solution is a weak\footnote{In the sense of Theorem \ref{thm:lambda star}(iii).} solution not in $W^{1,2}_0(\Omega)$?}

\vspace{3mm}

Concerning this problem, it has been proved that $u^\star$ belong to the energy space $W^{1,2}_0(\Omega)$ when $n\leq 5$ by Nedev \cite{Ned00}, for every $n$ when $\Omega$ is convex also by Nedev \cite{NedPrep}, and finally when $n=6$ by Villegas \cite{Vil13}.
Here we prove that $u^\star\in W^{1,2}_0(\Omega)$ for every $n$ and for every smooth domain $\Omega$, thus giving a conclusive answer also to this second open problem.

Note that, thanks to the superlinearity of $f$, it follows by Proposition \ref{prop:L1} that the $L^1(\Omega)$ norms of the functions $\{u_\lambda\}_{\lambda<\lambda^\star}$ are uniformly bounded by a constant depending only on $f$ and~$\Omega$. Hence, by applying Theorem \ref{thm:globalCalpha} to the functions $\{u_\lambda\}_{\lambda<\lambda^\star}$ and letting  $\lambda \uparrow \lambda^\star$, we immediately deduce that extremal solutions are always $W^{1,2}$ (actually even $W^{1,2+\gamma}$) in every dimension, and that they are universally bounded (and hence smooth) in dimension $n\leq 9$. 
We summarize this in the following:

\begin{corollary}\label{thm:energy-solns}
Let  $\Omega\subset \R^n$ be a bounded domain of class $C^3$. 
Assume that $f: [0,+\infty) \to (0,+\infty)$ is nondecreasing, convex, and superlinear at $+\infty$, and let $u^\star$ denote the extremal solution of \eqref{eq:lambda}.

Then $u^\star \in W^{1,2+\gamma}_0(\Omega)$ for some dimensional exponent $\gamma>0.$
In addition, if $n \leq 9$ then $u^\star$ is bounded and it is therefore a classical solution.
\end{corollary}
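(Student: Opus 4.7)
The plan is to apply Theorem \ref{thm:globalCalpha} to the approximating family $\{u_\lambda\}_{\lambda<\lambda^\star}$ of classical stable solutions provided by Theorem \ref{thm:lambda star}, derive estimates uniform in $\lambda$, and then pass to the limit $\lambda \uparrow \lambda^\star$. All the hard work is encapsulated in Theorem \ref{thm:globalCalpha} and Proposition \ref{prop:L1}, so the argument is essentially a soft compactness/lower semicontinuity step.

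First I would observe that for each $\lambda \in (0,\lambda^\star)$ the function $u_\lambda \in C^0(\overline\Omega)\cap C^2(\Omega)$ is a classical stable solution of $-\Delta u_\lambda = \tilde f_\lambda(u_\lambda)$ in $\Omega$ with $u_\lambda = 0$ on $\partial\Omega$, where the nonlinearity $\tilde f_\lambda := \lambda f$ inherits nonnegativity, monotonicity, and convexity from $f$. Theorem \ref{thm:globalCalpha} therefore applies and gives
\[
\|\nabla u_\lambda\|_{L^{2+\gamma}(\Omega)} \le C \|u_\lambda\|_{L^1(\Omega)}
\quad \text{and, if } n\le 9, \quad
\|u_\lambda\|_{C^\alpha(\overline\Omega)} \le C \|u_\lambda\|_{L^1(\Omega)},
\]
with $\gamma,\alpha>0$ dimensional and $C$ depending only on $\Omega$ (crucially, independent of $\lambda$ and of the specific $f$). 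By the superlinearity of $f$, Proposition \ref{prop:L1} furnishes a constant $M$ depending only on $f$ and $\Omega$ such that $\|u_\lambda\|_{L^1(\Omega)} \le M$ uniformly in $\lambda$. Combining these two ingredients, the family $\{u_\lambda\}_{\lambda<\lambda^\star}$ is uniformly bounded in $W^{1,2+\gamma}_0(\Omega)$, and in $C^\alpha(\overline\Omega)$ when $n \le 9$.

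The passage to the limit is then almost automatic. Since $u_\lambda \uparrow u^\star$ monotonically as $\lambda \uparrow \lambda^\star$ (Theorem \ref{thm:lambda star}(iii)), monotone convergence gives $u_\lambda \to u^\star$ in $L^1(\Omega)$. Extracting a weakly convergent subsequence from the $W^{1,2+\gamma}_0$-bounded family and identifying its weak limit with $u^\star$ via the $L^1$ convergence (weak $W^{1,2+\gamma}_0$ convergence implies $L^1$ convergence, and the limit is unique), weak lower semicontinuity of the norm yields $u^\star \in W^{1,2+\gamma}_0(\Omega)$ with a bound depending only on $f$ and $\Omega$. For $n \le 9$, the uniform $C^\alpha(\overline\Omega)$ bound together with Arzel\`a--Ascoli gives $u^\star \in C^\alpha(\overline\Omega)$, so in particular $u^\star \in L^\infty(\Omega)$. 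Since $f$ is continuous and $u^\star$ is bounded, the right-hand side of $-\Delta u^\star = \lambda^\star f(u^\star)$ lies in $L^\infty$, and a standard Schauder bootstrap \cite{GT} upgrades $u^\star$ to a classical solution.

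I do not anticipate any serious obstacle beyond the content of the two results already cited. The only delicate point is the identification of the weak $W^{1,2+\gamma}$ limit with the monotone $L^1$ limit $u^\star$, which is precisely what lets the boundary condition $u^\star \in W^{1,2+\gamma}_0(\Omega)$ survive the limiting procedure; but this is immediate from uniqueness of weak limits. Everything else is routine functional-analytic bookkeeping on top of the a priori estimates.
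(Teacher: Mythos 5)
Your proposal is correct and follows essentially the same route as the paper: the authors also deduce the corollary by applying Theorem \ref{thm:globalCalpha} to the classical stable solutions $u_\lambda$, bounding $\|u_\lambda\|_{L^1(\Omega)}$ uniformly via Proposition \ref{prop:L1}, and letting $\lambda\uparrow\lambda^\star$. The limiting step you spell out (weak $W^{1,2+\gamma}_0$ compactness plus identification of the limit with $u^\star$, and Arzel\`a--Ascoli when $n\le 9$) is exactly the "letting $\lambda\uparrow\lambda^\star$" the paper leaves implicit.
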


\subsection{The case $\mathbf{n\geq 10}$}  \label{section1.3}
In view of the results described in the previous sections, it is natural to ask what can one say about stable solutions in dimension $n \geq 10.$ 
Our strategy of proof can be used to provide optimal (or perhaps almost optimal) integrability estimates in Morrey spaces in every dimension, as stated next (see Section \ref{sect:n10} for more details and for Morrey estimates for the gradient of stable solutions).

Recall that Morrey norms are defined as
\[\|w\|_{M^{p,\beta}(\Omega)}^p:=\sup_{y\in\overline\Omega,\ r>0}r^{\beta-n}\int_{\Omega\cap B_r(y)}|w|^p\,dx,\]
for $p\geq1$ and $\beta\in(0,n)$.

\begin{theorem}\label{thm11}
Let $u\in C^2(B_1)$ be a stable 
solution of
$$
-\Delta u=f(u) \quad \text{in }B_1\subset \R^n,
$$
with  $f:\R \to\R$ locally Lipschitz.
Assume that $n\geq 10$ and  define
\begin{equation}
\label{eq:pn}
p_n:=\left\{
\begin{array}{ll}
\infty &\text{if }n=10,\\
\frac{2(n-2\sqrt{n-1}-2)}{n-2\sqrt{n-1}-4} &\text{if }n\geq 11.
\end{array}
\right.
\end{equation}

Then
\begin{equation}
\label{eq:p n11}
\Vert u\Vert_{M^{p,2+\frac{4}{p-2}}(B_{1/2})}\leq C \Vert u\Vert_{L^1(B_1)}\quad\ \text{ for every }\ \
p<p_n,
\end{equation}
where  $C$ depends only on $n$ and $p$.

In addition, if {$f$ is nonnegative and nondecreasing,}  $\Omega\subset \R^n$ is a bounded domain of class~$C^3$, and $u\in C^0(\overline\Omega) \cap C^2(\Omega)$ is a stable solution of 
$$
\left\{
\begin{array}{cl}
-\Delta u=f(u) & \text{in }\Omega\\
u=0 & \text{on }\partial\Omega,
\end{array}
\right.
$$
then 
\begin{equation}
\label{eq:p n11 global}
\Vert u\Vert_{M^{p,2+\frac{4}{p-2}}(\Omega)}\leq C \Vert u\Vert_{L^1(\Omega)}\quad\ \text{ for every }\ \
p<p_n,
\end{equation}
for some constant $C$ depending only on $p$ and $\Omega$.
\end{theorem}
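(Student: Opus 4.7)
\textbf{Interior estimate \eqref{eq:p n11}.} By translation and rescaling, it suffices to bound $\int_{B_r(x_0)}|u|^p\,dx$ by $Cr^{n-\beta}\|u\|_{L^1(B_1)}^p$ on every ball $B_r(x_0)\subset B_{1/2}$, with $\beta=2+4/(p-2)=2p/(p-2)$. The plan is to run a localized version of the iterative machinery underlying the proof of Theorem~\ref{thm:L1-CalphaW12} (developed in Section~\ref{sect:n10}). Concretely, the stability inequality \eqref{stabilityLip} is tested against functions of the form $\xi=g(u)\eta$, with $\eta$ a cutoff supported in $B_r(x_0)$ and $g$ a suitable power, and combined with the equation via integration by parts to produce Caccioppoli-type inequalities whose iteration (Moser scheme) raises the integrability exponent of $u$. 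In every dimension this iteration can be pushed up to any $p<p_n$; the restriction $n\le 9$ enters in Theorem~\ref{thm:L1-CalphaW12} only to close the bound at $p=+\infty$. The rescaling-invariant form of the intermediate integral estimates already carries the Morrey exponent $\beta$: the factor $r^{n-\beta}$ on the right-hand side of \eqref{eq:p n11} is produced by the $|x-x_0|^{-2}$ weight coming from the sharp Hardy inequality, combined with the $r^{-2}$ factor from $|\nabla\eta|^2$. (The choice of $\beta$ is forced by the explicit stable singular solutions $u(x)=|x|^{-2/(p-2)}$, which lie at the threshold of $M^{p,2+4/(p-2)}$.)

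\textbf{Global estimate \eqref{eq:p n11 global}.} For $\Omega$ of class $C^3$ with $f$ nonnegative and nondecreasing, I would transport the interior bound to the boundary exactly as in the proofs of Theorem~\ref{thm:globalCalpha} and Corollary~\ref{thm:convex}. Near $\partial\Omega$, the moving planes method, which uses only the monotonicity of $f$ and the $C^2$ regularity of $\partial\Omega$, furnishes a uniform $L^\infty$ bound on $u$ in a tubular neighborhood $\{\dist(\cdot,\partial\Omega)<\delta\}$ with $\delta>0$ depending only on $\Omega$, controlled by $\|u\|_{L^1(\Omega)}$. Such an $L^\infty$ bound trivially yields a Morrey bound in any exponent. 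On the complement $\{\dist(\cdot,\partial\Omega)>\delta\}$, one applies the interior estimate \eqref{eq:p n11}. Joining the two via a finite cover of $\Omega$ produces \eqref{eq:p n11 global}.

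\textbf{Main obstacle.} The crux is to run the Moser iteration all the way up to the sharp exponent $p_n$ without collapsing constants. The value $p_n$ arises as the threshold at which the algebraic constraint on the test-function exponent --- forced by the simultaneous use of the stability inequality, the equation, and the sharp Hardy inequality $\int|\xi|^2/|x-x_0|^2\le\frac{4}{(n-2)^2}\int|\nabla\xi|^2$ --- ceases to be feasible. This is precisely the Joseph--Lundgren-type computation, giving $p_n=\frac{2(n-2\sqrt{n-1}-2)}{n-2\sqrt{n-1}-4}$ when $n\ge 11$ and $p_n=+\infty$ when $n=10$ (the boundary case for which the stability constraint decouples). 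All of the analytic tools needed are already in place in the previous sections; the novelty in Theorem~\ref{thm11} is to stop the iteration just before this degeneracy and record the resulting inequality in Morrey rather than $L^\infty$ form. Sharpness of both the exponent $p_n$ and the Morrey scale $\beta$ is verified on the one-parameter family $u(x)=|x|^{-2/(p-2)}$ of singular stable solutions to Lane--Emden-type equations.
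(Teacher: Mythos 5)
Your interior argument rests on a Moser-type iteration with test functions $\xi=g(u)\eta$, $g$ a power of $u$, combined with the equation. This is the classical Crandall--Rabinowitz scheme, and it cannot be run here: the theorem assumes only that $f$ is locally Lipschitz (not nonnegative, not monotone, not convex, not a power), so there is no way to compare $f'(u)g(u)^2$ with $f(u)G(u)$ and extract a power of $u$ from the stability inequality; the exponent $p_n$ you would obtain from such an iteration is tied to a specific nonlinearity, whereas the claimed bound is universal in $f$. The paper's proof (Theorem \ref{thm11 1}) goes a different way: it tests stability with $\xi=(x\cdot\nabla u)|x|^{-a/2}\zeta$ (the same radial test function as in the $n\le 9$ case, via Lemma \ref{conseqestab2}), which for $8<a<2(1+\sqrt{n-1})$ yields the weighted bound $\int_{B_{1/4}}|\nabla u|^2|x|^{-a}\,dx\le C\int_{B_{1/2}\setminus B_{1/4}}|\nabla u|^2\,dx\le C\|u\|_{L^1(B_1)}^2$, i.e. $\nabla u\in M^{2,\beta}$ with $\beta=n-a$; the constraint $a<2(1+\sqrt{n-1})$ comes from requiring $n-2+a-a^2/4>0$, and this --- not a Hardy/Moser threshold --- is where $p_n$ originates. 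One then passes from $\nabla u\in M^{2,\beta}$ to $u\in M^{\frac{2\beta}{\beta-2},\beta}$ by the Adams embedding for Riesz potentials, a step entirely absent from your outline. The case $n=10$ needs the separate logarithmic weight $\eta=|x|^{-4}\bigl|\log|x|\bigr|^{-\delta/2}\zeta$, which you also do not address.

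The global argument has a second gap: you invoke the moving planes method to get an $L^\infty$ bound in a tubular neighborhood of $\partial\Omega$, but moving planes requires convexity of $\Omega$ (this is exactly the hypothesis of Corollary \ref{thm:convex}); for a general bounded $C^3$ domain no such boundary $L^\infty$ bound is available, and indeed the whole boundary machinery of Sections \ref{sect:compact}--\ref{sect:global} exists precisely because moving planes does not suffice there. The paper's Theorem \ref{thm11 2} instead repeats the weighted radial test-function argument near the boundary using Lemma \ref{lem:x Du def} (the $\vartheta$-deformation version of \eqref{eq:firstest}) together with the boundary $W^{1,2}$ control of Proposition \ref{prop:L1controlsW12bdry}. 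So both halves of your proposal would need to be replaced, not merely completed.
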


It is interesting to observe that the above result is essentially optimal.
To see this we recall that, in dimension $n=10$, the function $u=\log(1/|x|^2)$ is an unbounded $W^{1,2}_0(B_1)$ stable solution in $B_1$
(see \eqref{eq:n10}, and recall that it can be approximated by stable classical solutions by \cite[Section 3.2.2]{Dup}).
Also, as shown in  \cite{BV}, for $n\geq 11$ the function $u(x)=|x|^{-2/(q_n-1)}-1$ is the extremal solution of
\begin{equation}
\left\{ \begin{array}{cl}
 -\Delta u  = \lambda^\star (1+u)^{q_n} & \textrm{in }B_{1}\\
u > 0 & \textrm{in }B_{1}\\
 u  =  0 & \textrm{on }\partial B_{1},
\end{array}
\right.
\label{eq:05}
\end{equation}
with $\lambda^\star=\frac{2}{q_n-1}\big(n-2-\frac{2}{q_n-1}\big)$ and $q_n:=
\frac{n-2\sqrt{n-1}}{n-2\sqrt{n-1}-4}$. 
In particular, it is easy to see that $u \in M^{p,2+\frac{4}{p-2}}(B_{1/2})$ if and only if $p\leq p_n$.
It is an open question whether \eqref{eq:p n11} holds with $p=p_n$ for a general stable solution $u$.

\subsection{Idea of the proofs}

The starting point is the stability inequality for $u$, i.e.,
\begin{equation}\label{idea-proof-stability}
\int_{B_1} f'(u) \xi^2\,dx\leq \int_{B_1} |\nabla \xi|^2\,dx  \qquad \mbox{ for all }
\xi\in C^\infty_c({B_1}).
\end{equation}

In order to get a strong information on $u$, one has to choose an appropriate test function $\xi$ in \eqref{idea-proof-stability}.
Most of the papers on this topic (including those of Crandall-Rabinowitz \cite{CR} and Nedev \cite{Ned00}) have considered $\xi=h(u)$ for some appropriate function $h$ depending on the nonlinearity $f$.
The main idea in the $L^\infty$ estimate of \cite{C10} for $n\leq4$ was to take, instead, $\xi=|\nabla u|\varphi(u)$, and choose then a certain $\varphi$ depending on the solution $u$ itself.

Here, a first key idea in our proofs is to take a test function of the form
\[\xi=(x\cdot \nabla u)|x|^{(2-n)/2}\zeta,\]
with $0\leq \zeta\leq 1$ a smooth cut-off function equal to $1$ in $B_\rho$ and vanishing outside $B_{3\rho/2}$.
Thanks to this, we can prove the following inequality (see Lemma \ref{conseqestab2}):
there exists a dimensional constant $C$ such that
\begin{equation}
\label{eq:2n10}
(n-2)(10-n)\int_{B_\rho}|x|^{-n}|x\cdot \nabla u|^2\,dx \leq C\rho^{2-n}\int_{B_{3\rho/2}\setminus B_\rho}|\nabla u|^2\,dx \qquad \mbox{for all }\,0<\rho<{\textstyle\frac23}.
\end{equation}
From this inequality we see immediately that for $3 \leq n\leq 9$ we get a highly nontrivial information. 
While of course one can always assume that $n\geq 3$ (if $n\leq 2$ it suffices to add some superfluous variables to reduce to the case $n=3$), here we see that the assumption $n\leq 9$ is crucial.

Thus, when $n \leq 9$, the above inequality tells us that the radial derivative of $u$ in a ball is controlled by the total gradient in an annulus. 
Still, it is important to notice that \eqref{eq:2n10} does \emph{not} lead to an $L^\infty$ bound for general solutions $u$ to $-\Delta u=f(u)$ in dimensions $n\leq9$.\footnote{This can be seen by taking functions $u$ in $\R^3$ depending only on two variables; see Remark \ref{rem22}.} 
Thus, we still need to use stability again in a crucial way.

If we could prove that for stable solutions the radial derivative $x\cdot \nabla u$ and the total derivative $\nabla u$ have comparable size in $L^2$ at every scale, then {we could control the right hand side of \eqref{eq:2n10} with $\int_{B_{3\rho/2}\setminus B_\rho}|x|^{-n}|x\cdot \nabla u|^2\,dx$. This would imply}  that
$$
\int_{B_\rho}|x|^{-n}|x\cdot \nabla u|^2\,dx \leq C\int_{B_{3\rho/2}\setminus B_\rho}|x|^{-n}|x\cdot \nabla u|^2\,dx,
$$
and by a suitable iteration and covering argument we could conclude that $u \in C^{\alpha}$.
This is indeed the core of our interior argument: we show that  the radial derivative and the total derivative have comparable size in $L^2$ (at least whenever the integral of $|\nabla u|^2$ on balls enjoys a doubling property; see Lemma \ref{lem:22}). 
This is based on a delicate compactness argument, which relies on a series of a priori estimates:
\begin{enumerate}
\item curvature-type estimates for the level sets of $u$, which follow by taking $\xi=|\nabla u|\eta$
 as a test function in the stability inequality; see Lemma \ref{conseqestab};
\item the higher $L^{2+\gamma}$ integrability of the gradient, which follows from (1) and a suitable Dirichlet energy estimate, \eqref{ahgiohwiob1}, on each level set of $u$; see Proposition \ref{prop:W12+ep};
\item a general compactness argument for superharmonic functions; see Lemma \ref{strongconvergence};
\item the non-existence of nontrivial 0-homogeneous superharmonic functions; see the proof of Lemma \ref{lem:22}.
\end{enumerate}
Combining all these ingredients, we prove Theorem \ref{thm:L1-CalphaW12}.

\smallskip

For the boundary estimate we would like to repeat the interior argument described above near a boundary point.
We note that, whenever the boundary is completely flat and contains the origin, since $x\cdot \nabla u$ vanishes on the flat boundary then one can still use the test function $\xi=(x\cdot \nabla u)|x|^{(2-n)/2}\zeta$ to deduce the analogue of \eqref{eq:2n10}. Actually, a suitable variant of this test function allows us to obtain a similar estimate even when the boundary is $C^3$-close to a hyperplane (see Lemma \ref{lem:x Du def}). 
In addition, when the boundary is $C^3$-close to a hyperplane, we are able to prove the higher $L^{2+\gamma}$ integrability of the gradient near the boundary (see Proposition \ref{prop:W12+epbdry}), and from there we can conclude that the $W^{1,2}$ norm near the boundary can be controlled only in terms of the $L^1$ norm (see Proposition \ref{prop:L1controlsW12bdry}).

Unfortunately, even if the boundary is completely flat, one cannot {repeat the argument used in the interior case to} deduce that the radial derivative controls the total gradient near a boundary point ---which was a crucial point in the interior case. 
Indeed, while in the interior case the proof relied on the non-existence of nontrivial 0-homogeneous superharmonic functions in a neighborhood of the origin (see the proof of Lemma \ref{lem:22}), in the boundary case such superharmonic functions may exist! 
Hence, in this case we need to exploit in a stronger way the fact that $u$ solves a semilinear equation (and not simply that $u$ is superharmonic since $f\geq 0$). 
However, since our arguments are based on a compactness technique, we need bounds that are independent of the nonlinearity $f$.

A new key ingredient here is presented in Section \ref{sect:compact}: we are able to prove that, whenever the nonlinearity is convex and nondecreasing ---but possibly taking the value $+\infty$ in an interval $[M,\infty)$--- the class of stable solutions is closed under $L^1_{\rm loc}$ convergence (see Theorem \ref{thm:stability}). Note that this is particularly striking, since no compactness assumptions are made on the nonlinearities!

With this powerful compactness theorem at hand, we are able to reduce ourself to a flat-boundary configuration, control the gradient by its radial component, and prove Theorem~\ref{thm:globalCalpha}.

\smallskip

Finally, the case $n\geq 11$ is obtained by choosing the test function $\xi=(x\cdot \nabla u)|x|^{-a/2}\zeta$, where $a=a_n\in(0,n-2)$ are suitable exponents, while in the case $n=10$ we choose $\xi=(x\cdot \nabla u)|x|^{-4}\bigl|\log|x|\bigr|^{-\delta/2}\zeta$, with $\delta>0$.

\vspace{4mm}

The techniques and {ideas introduced in} this paper {are robust enough to be used for proving analogues of our results in} other nonlinear problems.
This is done in a series of forthcoming works by Miraglio, Sanch\'on, and the first author \cite{CMS19} for the $p$-Laplacian, and by Sanz-Perela and the first author \cite{CS19} for the fractional Laplacian.

\subsection{Structure of the paper}

In Section \ref{sect:interior} we exploit the stability of $u$ and choose a series of different test functions to deduce inequality \eqref{eq:2n10} as well as a universal
$W^{1, 2+\gamma}$ bound in terms only of the $L^1$ norm of the solution. This is used in Section \ref{sect:int C0a} to prove our interior estimate of Theorem \ref{thm:L1-CalphaW12}.

In Section \ref{sect:compact} we prove that the class of stable solutions with convex nondecreasing nonlinearities is closed in $L^1_{\rm loc}$,
while in Section \ref{sect:bdry W12g} we obtain a 
$W^{1, 2+\gamma}$ bound near the boundary in terms of the $L^1$ norm when $\partial\Omega$ is a {small} $C^3$-deformation of a hyperplane. 
These results are used in Section \ref{sect:global} to prove Theorem \ref{thm:globalCalpha} via a blow-up and covering argument.

Finally, in Section \ref{sect:n10} we deal with the case $n\geq 10$ and prove Theorem \ref{thm11}.

In the appendices we collect a series of technical lemmata and we show a classical a priori estimate on the $L^1$ norm of solutions to  Gelfand problems.

\section{Interior $W^{1,2+\gamma}$ estimate}
\label{sect:interior}

In this section we begin by proving a series of interior estimates that follow by choosing suitable test functions in the stability inequality. Then we show a universal
$W^{1, 2+\gamma}$ bound in terms only of the $L^1$ norm of the solution.
This is done by first controlling $\|\nabla u\|_{L^{2+\gamma}}$ by  $\|\nabla u\|_{L^{2}}$, and then $\|\nabla u\|_{L^{2}}$ by $\|u\|_{L^{1}}$.

Here and in the sequel, we shall use subscripts to denote partial derivatives (i.e., $u_i=\partial_iu,$ $u_{ij}=\partial_{ij}u$, etc.).

As mentioned in the introduction, our first key estimate for stable solutions comes from considering the test function $\xi=(x\cdot \nabla u) \eta$, and then take $\eta=|x|^{(2-n)/2}\zeta$ for some cut-off function $\zeta$.
We split the computations in two steps since this will be useful in the sequel.

We denote by $C^{0,1}_c(B_1)$ the space of Lipschitz functions with compact support in $B_1$. 

\begin{lemma}\label{conseqestab2}
Let $u\in C^2(B_1)$ be a stable solution of $-\Delta u=f(u)$ in $B_1\subset \R^n$,  with $f$ locally Lipschitz.  
Then, for all $\eta \in C^{0,1}_c(B_1)$ we have
\begin{equation}\label{eq:firstest}
 \int_{B_1} \Big( \big\{(n-2)\eta +2 x\cdot \nabla \eta \big\}\eta\, |\nabla u|^2 
 - 2(x\cdot \nabla u) \nabla u\cdot \nabla(\eta^2) 
 - |x\cdot \nabla u|^2 |\nabla \eta|^2 \Big)\,dx \le 0.
\end{equation}
As a consequence, for all $\zeta \in C^{0,1}_c(B_1)$ we have
\begin{equation}\label{firstest-n-2}
\begin{split}
\frac{(n-2)(10-n)}{4}\int_{B_1} & |x|^{-n}  |x\cdot\nabla u|^2  \zeta^2\, dx \\ 
&\hspace{-3cm} \leq \int_{B_1}  (-2) |x|^{2-n} |\nabla u|^2\zeta (x\cdot\nabla\zeta)\, dx   
+\int_{B_1} 4 |x|^{2-n} (x\cdot\nabla u) \zeta\,  \nabla u\cdot\nabla\zeta \, dx \\
& \hspace{-2cm}
+\int_{B_1} (2-n) |x|^{-n} |x\cdot\nabla u|^2\zeta (x\cdot\nabla\zeta)  \, dx
+\int_{B_1} |x|^{2-n} |x\cdot\nabla u|^2 |\nabla\zeta|^2 \, dx.
\end{split}
\end{equation}

In particular, if $3 \leq n\leq 9$, then for all $\rho< 2/3$ it holds 
\begin{equation}\label{eq:11}
\int_{B_\rho}|x|^{-n}|x\cdot \nabla u|^2\,dx \leq C\rho^{2-n}\int_{B_{3\rho/2}\setminus B_\rho}|\nabla u|^2\,dx,
\end{equation}
where $C$ is a dimensional constant.
\end{lemma}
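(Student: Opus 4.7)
The plan is to prove the three claims in sequence, each building on the previous one.

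\textbf{Step 1: Proof of \eqref{eq:firstest}.} I would plug $\xi = v\,\eta$ with $v := x\cdot\nabla u$ into the stability inequality \eqref{stabilityLip}. The key auxiliary identity, obtained by differentiating $-\Delta u = f(u)$ with respect to $x_i$, multiplying by $x_i$, and summing, is
\[
-\Delta v \;=\; f'(u)\,v + 2\,f(u).
\]
Using this and integrating by parts the identity $\int f'(u)\,v^2\eta^2\,dx = \int (-\Delta v - 2f(u))\,v\,\eta^2\,dx$, I can replace the $f'(u)$--term in the stability inequality by something with no $f'$. Combined with the expansion $|\nabla\xi|^2 = \eta^2|\nabla v|^2 + 2\eta v\,\nabla v\cdot\nabla\eta + v^2|\nabla\eta|^2$, the $|\nabla v|^2$ terms cancel exactly, leaving
\[
\int_{B_1}\bigl(v^2|\nabla\eta|^2 + 2\,f(u)\,v\,\eta^2\bigr)\,dx \;\ge\; 0.
\]
To rewrite $\int f(u)\,v\,\eta^2\,dx$, I substitute $f(u)=-\Delta u$, integrate by parts, and use the pointwise identity $\nabla u\cdot\nabla v = |\nabla u|^2 + \tfrac12 x\cdot\nabla(|\nabla u|^2)$. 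A further integration by parts on $x\cdot\nabla(|\nabla u|^2)$ produces a factor $-n$ from $\mathrm{div}(x)$ and converts the inequality into exactly \eqref{eq:firstest}.

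\textbf{Step 2: Proof of \eqref{firstest-n-2}.} I would specialize \eqref{eq:firstest} to $\eta = |x|^{(2-n)/2}\zeta$. The homogeneity $x\cdot\nabla |x|^{(2-n)/2} = \tfrac{2-n}{2}|x|^{(2-n)/2}$ gives the crucial cancellation
\[
(n-2)\eta + 2\,x\cdot\nabla\eta \;=\; 2\,|x|^{(2-n)/2}\bigl(x\cdot\nabla\zeta\bigr),
\]
which kills the otherwise singular bulk term. Expanding $\nabla\eta$ and $|\nabla\eta|^2$, all remaining contributions can be split between (i) terms multiplied by $|x|^{-n}|x\cdot\nabla u|^2\zeta^2$ and (ii) terms involving $\nabla\zeta$ or $x\cdot\nabla\zeta$. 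Collecting the coefficient of $\int |x|^{-n}|x\cdot\nabla u|^2\zeta^2\,dx$ from the $-2(x\cdot\nabla u)\nabla u\cdot\nabla(\eta^2)$ piece and from the $-|x\cdot\nabla u|^2|\nabla\eta|^2$ piece yields
\[
-2(2-n) - \frac{(n-2)^2}{4} \;=\; \frac{(n-2)(10-n)}{4},
\]
which is the left hand side of \eqref{firstest-n-2}; the four remaining terms match the right hand side verbatim. Since $|x|^{(2-n)/2}$ is not $C^{0,1}_c$ when $n\ge 3$, I would justify this choice by approximating with $\eta_\varepsilon := (|x|^2+\varepsilon^2)^{(2-n)/4}\zeta \in C^{0,1}_c(B_1)$ and letting $\varepsilon\to 0$, using that $v = x\cdot\nabla u = O(|x|)$ near the origin to dominate and pass to the limit in each integral.

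\textbf{Step 3: Proof of \eqref{eq:11}.} Assume $3\le n\le 9$, so the coefficient $(n-2)(10-n)/4$ is strictly positive. For $\rho<2/3$, choose $\zeta\in C^{0,1}_c(B_1)$ with $\zeta\equiv 1$ on $B_\rho$, $\zeta\equiv 0$ outside $B_{3\rho/2}$, $|\nabla\zeta|\le C/\rho$. Then $\nabla\zeta$ and $x\cdot\nabla\zeta$ are supported in $A_\rho := B_{3\rho/2}\setminus B_\rho$, where $|x|\sim\rho$, $|x\cdot\nabla u|\le \rho|\nabla u|$, $|x\cdot\nabla\zeta|\le C$. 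A term-by-term inspection shows that each of the four integrals on the right of \eqref{firstest-n-2} is bounded by $C\rho^{2-n}\int_{A_\rho}|\nabla u|^2\,dx$. Dividing by the positive constant $(n-2)(10-n)/4$ yields \eqref{eq:11}.

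\textbf{Main obstacle.} The genuine content is the algebraic cancellation in Step 2 that exposes the factor $(n-2)(10-n)$, encoding the critical dimension $n=10$; the computation is delicate because one must carefully track cross terms coming from the non-compactly supported weight $|x|^{(2-n)/2}$. The attendant regularization (or, equivalently, truncating $\zeta$ near the origin) is technical but routine given $v(0)=0$ and $u\in C^2$.
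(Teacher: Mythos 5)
Your proposal is correct and follows essentially the same route as the paper: the same test function $\xi=(x\cdot\nabla u)\eta$ in the stability inequality together with the identity $\Delta(x\cdot\nabla u)+f'(u)(x\cdot\nabla u)=2\Delta u$ for Step 1, the same weight $\eta=|x|^{(2-n)/2}\zeta$ (with a routine regularization near the origin) producing the coefficient $2(n-2)-\tfrac{(n-2)^2}{4}=\tfrac{(n-2)(10-n)}{4}$ for Step 2, and the same annular cut-off for Step 3. The only (harmless) differences are bookkeeping ones: the paper packages the Step 1 integration by parts via a divergence identity and approximates the weight by $\min\{|x|^{-(n-2)/2},\varepsilon^{-(n-2)/2}\}\zeta$ rather than by $(|x|^2+\varepsilon^2)^{(2-n)/4}\zeta$.
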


\begin{proof}
We split the proof in three steps.
\\

\noindent
{\bf Step 1:} {\it Proof of \eqref{eq:firstest}}.
{We note that, by approximation, \eqref{stabilityLip} holds for all $\xi \in C^{0,1}_c(B_1)$. Hence, we can consider as test function in \eqref{stabilityLip}
a function of the form} $\xi=\textbf{c}\eta$,
where $\textbf{c}\in W^{2,p}_{\rm loc}(B_1)$ for some $p>n$, and $\eta\in C^{0,1}_c(B_1)$. 
Then, a simple integration by parts gives that 
\begin{equation}\label{eq:07}
\int_{B_1} \bigl( \Delta \textbf{c}+f_-'(u)\textbf{c}\bigr) \textbf{c}\,\eta^{2}\, dx \leq 
\int_{B_1} \textbf{c}^{2}\left|\nabla \eta\right|^{2} dx.
\end{equation}
We now choose $\textbf{c}(x):=x\cdot \nabla u(x)$ (this function belongs to $W^{2,p}_{\rm loc}(B_1)$ for every $p<\infty$ by Lemma \ref{lem:eq Du}(ii)).
Then, by a direct computation and using Lemma \ref{lem:eq Du}(ii) again,
we deduce that 
$$
\Delta \textbf{c}=x\cdot \nabla \Delta u +2\sum_{i=1}^n u_{ii}=-f_-'(u)\textbf{c}+2\Delta u
$$
a.e. in $B_1$.
Hence, substituting this identity in \eqref{eq:07} we get
\begin{align*}
\int_{B_1} |x\cdot \nabla u|^{2}&\left|\nabla \eta\right|^{2} \,dx \geq 
\int_{B_1}\bigl( \Delta \textbf{c}+f_-'(u)\textbf{c}\bigr) \textbf{c}\,\eta^{2}\, dx=2\int_{B_1} (x\cdot \nabla u)\Delta u\,\eta^{2}\, dx\\
&=\int_{B_1}\Big({\rm div}\bigl(2(x\cdot \nabla u)\nabla u - |\nabla u|^2x\bigr)+(n-2)|\nabla u|^2\Bigr)\eta^2\,dx\\
&=\int_{B_1}\Big(-2(x\cdot \nabla u) \nabla u\cdot \nabla (\eta^2) +|
\nabla u|^2x\cdot \nabla (\eta^2)+(n-2)|\nabla u|^2\eta^2\Bigr)\,dx,
\end{align*}
and \eqref{eq:firstest} follows.\\

\noindent
{\bf Step 2:} {\it Proof of \eqref{firstest-n-2}.}
Given $a<n$,
we would like to take the function $\eta:=|x|^{-a/2}\zeta$ with $\zeta\in C^{0,1}_c(B_1)$ as a test function in \eqref{eq:firstest}.
Since, $\eta$ is not Lipschitz for $a>0$, we approximate it by the $C^{0,1}_c(B_1)$ function
\[\eta_\varepsilon:= \min\{|x|^{-a/2},\varepsilon^{-a/2}\}\zeta\]
for $\varepsilon\in(0,1)$, which agrees with $\eta$ in $B_1\setminus B_\varepsilon$.
We have that $\eta_\varepsilon\to\eta$ and $\nabla \eta_\varepsilon\to \nabla \eta$ a.e. in $B_1$ as $\varepsilon\downarrow0$.
At the same time, every term in \eqref{eq:firstest} with $\eta$ replaced by $\eta_\varepsilon$ is bounded in absolute value by $C|x|^{-a}|\nabla u|^2\leq \tilde C|x|^{-a}\in L^1_{\rm loc}(B_1)$ (since $u\in C^2(B_1)$).
Hence, the dominated convergence theorem gives that \eqref{eq:firstest} also holds with $\eta:=|x|^{-a/2}\zeta$.

Now, noticing that
\begin{equation}
\label{eq:x-a 1}
x\cdot \nabla \eta=-\frac{a}2|x|^{-a/2}\zeta+|x|^{-a/2}x\cdot \nabla \zeta,\quad \nabla (\eta^2)=-a|x|^{-a-2}\zeta^2x+2|x|^{-a}\zeta \nabla \zeta
\end{equation}
and
\begin{equation}
\label{eq:x-a 2}
|\nabla \eta|^2=\Big|-\frac{a}2|x|^{-a/2-2}\zeta x+|x|^{-a/2}\nabla \zeta \Big|^2=\frac{a^2}4|x|^{-a-2}\zeta^2 + |x|^{-a}|\nabla \zeta|^2-a|x|^{-a-2}\zeta (x\cdot \nabla \zeta),
\end{equation}
\eqref{firstest-n-2} follows from \eqref{eq:firstest} by choosing $a=n-2$.
\\

\noindent
{\bf Step 3:} {\it Proof of \eqref{eq:11}.}
Given $\rho \in (0,2/3)$, we consider a Lipschitz function $\zeta$, with $0\leq \zeta\leq 1$, such that $\zeta_{|B_{\rho}}=1$, $\zeta_{|\R^n\setminus B_{3\rho/2}}=0$,
and  $|\nabla \zeta| \leq C/\rho$. Using this function in \eqref{firstest-n-2}
and noticing that $|x|$ is comparable to $\rho$ inside ${\rm supp}(\nabla \zeta)\subset \overline B_{3\rho/2}\setminus B_\rho$, the result follows easily.
\end{proof}

\begin{remark}\label{rem22}
To deduce our $L^\infty$ estimate from \eqref{eq:11}, we will need to use again the stability of $u$. 
In fact, there exist $W^{1,2}$ weak solutions of semilinear equations (with $f>0$) which satisfy \eqref{eq:11} (in balls $B_\rho=B_\rho(y)$ centered at any point $y\in B_1(0)$) and are unbounded.

For instance, with $n=3$ take $u(x_1,x_2,x_3)=\tilde u(x_1,x_2)$, where $\tilde u$ is unbounded but belongs to $W^{1,2}_{\rm loc}(\R^2)$. 
One can then verify that \eqref{eq:11} holds inside every ball $B_\rho=B_\rho(y)$.
At the same time, by taking $\tilde u$ to be radially decreasing in $\R^2$, we can guarantee that $\tilde u$ solves a semilinear equation (and hence also $u$) for some nonlinearity $f$. 
An example is $\tilde u(\rho)=\log|\log \rho|$ in a small neighborhood of the origin, which leads to a smooth nonlinearity $f>0$.
\end{remark}

The key point to deduce boundedness from \eqref{eq:11} will be a higher $L^{2+\gamma}$ integrability result for the gradient of the solution, that we establish in the remaining of this section.

Towards this, we exploit again the stability of $u$ by choosing now, as another test function, $\xi=|\nabla u|\eta$ with $\eta$ a cut-off.
In the case when $u\in C^3$ this choice of test function and the following lemma are due to Sternberg and Zumbrun \cite{SZ}.
We verify next that the result holds also when $f$ is locally Lipschitz.

\begin{lemma}\label{conseqestab}
Let $u\in C^2(B_1)$ be a stable solution of $-\Delta u=f(u)$ in $B_1\subset \R^n$,  with $f$ locally Lipschitz.    Then, for all $\eta \in C^{0,1}_c(B_1)$ we have
\[
\int_{B_1}  \J^2 \eta ^2dx \le \int_{B_1} |\nabla u|^2 |\nabla \eta|^2dx,
\]
where\footnote{Even though we will not use it here, it is worth noticing that the quantity $\mathcal A$ controls the second fundamental form of the level sets of $u$. This was crucially used in \cite{C10}, in combination with the Sobolev-type inequality of  Michael-Simons and Allard, to prove regularity of stable solutions up to dimension $n \leq 4$.}
\begin{equation}\label{defAAA}
\J :=  
\begin{cases} \left( \sum_{ij} u_{ij}^2  - \sum_{i} \left(\sum_{j} u_{ij} \frac{u_j}{|\nabla u|} \right)^2  \right)^{1/2} \quad  \quad &\mbox{if  } \nabla u\neq 0
\\
0 &\mbox{if  } \nabla u=0.
\end{cases}
\end{equation}
\end{lemma}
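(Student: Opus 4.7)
The natural strategy is to plug $\xi = |\nabla u|\,\eta$ into the stability inequality \eqref{stabilityLip} and match it against a Bochner-type identity derived from differentiating the equation. Set $g:=|\nabla u|$. Since $g$ and $\eta$ are Lipschitz and compactly supported, $\xi = g\eta \in C^{0,1}_c(B_1)$ is an admissible test function (by approximation, \eqref{stabilityLip} extends to such $\xi$). Expanding the right-hand side gives
\begin{equation*}
|\nabla \xi|^2 \;=\; \eta^2 |\nabla g|^2 \;+\; 2\eta\,g\,\nabla g\cdot\nabla\eta \;+\; g^2 |\nabla\eta|^2 \qquad \text{a.e. in } B_1,
\end{equation*}
where $\nabla g$ is defined a.e.\ (e.g.\ $\nabla g = g^{-1}\sum_j u_j\nabla u_j$ on $\{\nabla u \neq 0\}$, and $\nabla g = 0$ a.e.\ on $\{\nabla u=0\}$ by Stampacchia's theorem applied to the Lipschitz function $g\geq 0$).

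The key input is a weak Bochner identity for $g^2$: differentiating $-\Delta u = f(u)$ (as made precise by Lemma \ref{lem:eq Du}, which provides $\Delta u_i = -f'_-(u) u_i$ a.e.\ in the relevant sense), we get
\begin{equation*}
{\textstyle\frac12}\Delta(g^2) \;=\; \sum_{ij} u_{ij}^2 \;+\; \sum_i u_i\, \Delta u_i \;=\; |D^2 u|^2 \;-\; f'_-(u)\,g^2 \qquad \text{a.e. in } B_1.
\end{equation*}
Multiplying by $\eta^2$ and integrating by parts (using that $\eta$ has compact support) yields
\begin{equation*}
\int_{B_1} |D^2 u|^2\,\eta^2\,dx \;-\; \int_{B_1} f'_-(u)\,g^2\,\eta^2\,dx \;=\; -\,2\int_{B_1} g\,\eta\, \nabla g \cdot \nabla \eta\,dx.
\end{equation*}

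Combining this identity with the stability inequality $\int f'_-(u)\,g^2\eta^2 \le \int |\nabla\xi|^2$ and cancelling the common cross term $2\int g\eta\,\nabla g\cdot\nabla\eta$ gives the clean bound
\begin{equation*}
\int_{B_1} \bigl( |D^2 u|^2 \;-\; |\nabla g|^2\bigr)\,\eta^2\,dx \;\le\; \int_{B_1} g^2\,|\nabla\eta|^2\,dx.
\end{equation*}
Finally, a pointwise algebraic check on $\{\nabla u\neq 0\}$ shows that $|D^2 u|^2 - |\nabla g|^2 = \mathcal{A}^2$ (indeed, $|\nabla g|^2 = \sum_i\bigl(\sum_j u_{ij}u_j/|\nabla u|\bigr)^2$ there). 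On the complementary set $\{\nabla u = 0\}$ the definition sets $\mathcal{A}=0$, while $|\nabla g|^2=0$ a.e.\ by Stampacchia and $|D^2u|^2\geq 0$, so discarding this non-negative contribution on the left only strengthens the inequality. This yields the stated bound $\int \mathcal{A}^2\eta^2 \le \int |\nabla u|^2 |\nabla\eta|^2$.

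\textbf{Main obstacle.} The one place care is required is the Bochner identity: when $f$ is merely locally Lipschitz, $u$ is only $W^{2,p}_{\rm loc}$ in its derivatives, so $\Delta u_i = -f'(u)u_i$ must be interpreted a.e. The justification relies on the earlier Lemma \ref{lem:eq Du}, which provides exactly the a.e.\ identity needed, together with the fact that both sides vanish a.e.\ on $\{u_i=0\}$ (so the choice of representative of $f'$ is immaterial). Once this is in hand, the integration by parts is standard since $g^2\in W^{1,\infty}_{\rm loc}(B_1)$ and $\eta^2$ has compact support, and the rest of the argument is purely algebraic.
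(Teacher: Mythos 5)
Your proof is correct and is essentially the paper's own argument in a slightly different packaging: integrating the Bochner identity $\tfrac12\Delta(|\nabla u|^2)=|D^2u|^2-f'_-(u)|\nabla u|^2$ against $\eta^2$ is exactly what the paper obtains by multiplying each component identity $-\Delta u_i=f'_-(u)u_i$ by $u_i\eta^2$ and summing over $i$, and both proofs then test stability with $|\nabla u|\,\eta$ and finish with the same pointwise algebra $|D^2u|^2-\bigl|\nabla|\nabla u|\bigr|^2=\J^2$ on $\{\nabla u\neq0\}$, handling $\{\nabla u=0\}$ via the a.e.\ vanishing of derivatives of Lipschitz functions there. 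No gaps.
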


When $u\in C^3$ (and $f\in C^1$), this follows from the stability inequality \eqref{idea-proof-stability} plus the fact that
\[|\nabla u|\bigl(\Delta|\nabla u|+f'(u)|\nabla u|\bigr)=\mathcal A^2\quad {\rm in}\quad \{\nabla u\neq0\};\]
see \cite{C10} for a proof.
We give here an aternative proof that does not require to compute~$\Delta |\nabla u|$.

\begin{proof}[Proof of Lemma \ref{conseqestab}]
We begin from the identity
$$
-\Delta u_i=f'_-(u)u_i\qquad {\rm for}\quad i=1,\dots,n;
$$
see Lemma \ref{lem:eq Du}(ii).
Multiplying this identity by $u_i\eta^2$ and integrating by parts, we obtain
$$
\int_{B_1}\Big(|\nabla (u_i\eta)|^2-(u_i)^2|\nabla \eta|^2\Bigr)\,dx = \int_{B_1}\nabla u_i\cdot \nabla (u_i\eta^2)\,dx=\int_{B_1}f'_-(u)u_i^2\eta^2\,dx,
$$
so that summing over $i$
 we get
\begin{equation}\label{aioghioe1}
\int_{B_1} \Big(\sum_{i} \big|\nabla (u_{i} \eta)\big|^2 -  |\nabla u|^2 |\nabla \eta|^2\Big)\,dx=    \int_{B_1}f_-'(u) |\nabla u|^2\eta^2\,dx. 
\end{equation}
On the other hand, testing the stability inequality \eqref{stabilityLip} with the Lipschitz function $|\nabla u|\eta$, we obtain 
\begin{equation}\label{aioghioe2}
 \int_{B_1} |\nabla (|\nabla u|\eta) |^2\,dx \ge \int_{B_1} f_-'(u)|\nabla u|^2\eta^2
\,dx. \end{equation}
Hence, combining \eqref{aioghioe1} with \eqref{aioghioe2} gives
\[
\int_{B_1}|\nabla u|^2 |\nabla \eta|^2\,dx\ge \int_{B_1} \Big( \sum_{i} \big|\nabla (u_{i} \eta)\big|^2 - |\nabla( |\nabla u|\eta) |^2 \Big)\,dx.
\]
Then, a direct computation shows that, inside the set  $\{\nabla u\neq 0\}$,
\[
  \sum_{i} \big|\nabla (u_{i} \eta)\big|^2 - |\nabla( |\nabla u|\eta) |^2  =  \biggl(\sum_{i,j} u_{ij}^2  - \sum_{i}\Big(\sum_{j} \frac{u_{ij} u_j}{|\nabla u|}\Big)^2\biggr) \eta^2 = \J^2\eta^2.
\]
On the other hand, since $\nabla u$ is Lipschitz, then $D^2u=0$ a.e. in $\{\nabla u=0\}$ (see, e.g., \cite[Theorem 1.56]{Tro}).
Therefore $\sum_{i} \big|\nabla (u_{i} \eta)\big|^2 - |\nabla( |\nabla u|\eta) |^2=0$ a.e. inside $\{\nabla u=0\}$, concluding the proof.
\end{proof}

Next we prove a general result that gives, in every dimension, a higher integrability result for the gradient of stable solutions.

\begin{proposition}\label{prop:W12+ep}
Let $u\in C^2(B_1)$ be a stable solution of $-\Delta u=f(u)$ in $B_1\subset \R^n$,   with $f$ locally Lipschitz and nonnegative. 
Then
\[
 \|\nabla u\|_{L^{2+\gamma}(B_{3/4})}  \le C \|\nabla u\|_{L^{2}(B_{1})}, 
\]
where $\gamma>0$ and $C$ are dimensional constants.
\end{proposition}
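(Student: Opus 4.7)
The plan is to combine the curvature-type stability estimate of Lemma~\ref{conseqestab} with the Michael-Simon-Allard Sobolev inequality on the (almost-everywhere smooth) level sets $\Sigma_t := \{u = t\}$, and then close the argument via a reverse H\"older inequality and Gehring's lemma.

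First, I would fix a smooth cutoff $\eta$ supported in $B_1$, equal to $1$ on $B_{3/4}$, and plug it into Lemma~\ref{conseqestab} to obtain the global $L^2$ estimate $\int_{B_{3/4}} \J^2\,dx \le C \int_{B_1} |\nabla u|^2\,dx$. The key geometric facts I would use are that, on regular level sets $\Sigma_t$ (where $\nabla u \neq 0$), both the mean curvature $H_{\Sigma_t}$ and the tangential gradient $|\nabla^{\Sigma_t} |\nabla u||$ are controlled by $\J/|\nabla u|$ and $\J$ respectively, since they involve only the tangential part of the Hessian; this is essentially the content of the definition \eqref{defAAA}.

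Next, I would apply the Michael-Simon-Allard Sobolev inequality on $\Sigma_t$ to a test function of the form $|\nabla u|^{1/2}$ multiplied by a cutoff, and integrate the resulting inequality in $t$ using the coarea formula $\int_E g\,|\nabla u|\,dx = \int_\R\int_{\Sigma_t \cap E} g\,d\HH^{n-1}\,dt$. After careful balancing of exponents, the left-hand side of Michael-Simon-Allard (raised to an appropriate power) should reconstruct $\int |\nabla u|^2$ over a smaller ball, while the right-hand side decomposes into (i) a cutoff-boundary term of the form $\int_{B_1\setminus B_{3/4}} |\nabla u|^q$ for some $q < 2$ and (ii) a $\J$-term that is absorbed using Step~1. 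This should yield a reverse H\"older inequality
\[
\biggl(\mean{B_r} |\nabla u|^2\,dx\biggr)^{1/2} \le C \biggl(\mean{B_{2r}} |\nabla u|^q\,dx\biggr)^{1/q}
\]
for every ball $B_{2r}\subset B_1$ at a uniform scale over $B_{3/4}$. An application of Gehring's lemma then upgrades this into the claimed $L^{2+\gamma}$ bound in $B_{3/4}$ for some dimensional $\gamma > 0$.

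The main technical hurdles I anticipate are: (a) establishing the pointwise bound $|\nabla^{\Sigma_t}|\nabla u|| \le \J$ on regular level sets, which requires a direct tangential computation; (b) justifying the coarea-based manipulations despite the possibly nontrivial critical set $\{\nabla u = 0\}$, which is handled by restricting to regular values of $t$ (a.e.\ by Sard's theorem) and then taking limits; and (c) matching exponents in the Michael-Simon-Allard inequality so that, after integration in $t$, one recovers on the left-hand side a power strictly larger than what appears on the right, allowing the reverse H\"older inequality to close.
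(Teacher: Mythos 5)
Your route is genuinely different from the paper's: you propose the Michael--Simon--Allard strategy of \cite{C10}, which the authors deliberately avoid (see the footnote to Lemma~\ref{conseqestab}: the control of the second fundamental form by $\J$ combined with Michael--Simon--Allard is what gave regularity only up to $n\le 4$ in \cite{C10}). The paper instead proves, from Lemma~\ref{conseqestab} together with the superharmonicity of $u$ (this is where $f\ge 0$ enters), a bound on $\int_{B_1}|{\rm div}(|\nabla u|\,\nabla u)|\eta^2\,dx$; deduces from it the \emph{uniform-in-$t$} level-set energy bound $\int_{\{u=t\}\cap B_{3/4}}|\nabla u|^2\,d\HH^{n-1}\le C$; and then interpolates via the coarea formula against the weight $h(t)^{-q}$, $h(t)=\max\{1,|t-\bar u|\}$, using the Sobolev--Poincar\'e bound $\|u-\bar u\|_{L^p(B_1)}\le C$ to make the $t$-integration converge. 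Your hurdle (a) is fine ($|\nabla^{\{u=t\}}|\nabla u||\le \J$ and $|H|\le C\J/|\nabla u|$ are the standard Sternberg--Zumbrun facts), but hurdle (b) is understated: Sard's theorem is \emph{not} available for $u\in C^2$ when $n\ge 3$, which is exactly why the paper mollifies the Heaviside function in Step~2 of its proof.

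The genuine gap is at the step you call ``careful balancing of exponents,'' and it does not close as outlined. First, the test function $|\nabla u|^{1/2}\zeta$ cannot work for $n\ge 4$: Michael--Simon--Allard yields $\bigl(\int_{\{u=t\}}|\nabla u|^{\frac{n-1}{2(n-2)}}\zeta^{\frac{n-1}{n-2}}\,d\HH^{n-1}\bigr)^{\frac{n-2}{n-1}}$ on the left, and since $\frac{n-1}{2(n-2)}<1$ for $n\ge 4$ while the coarea formula already contributes one factor of $|\nabla u|$, no power of this quantity can dominate $\int_{\{u=t\}}|\nabla u|\,d\HH^{n-1}$, let alone the exponent strictly above $1$ needed for a gain. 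Second, and more seriously, even with the correct power $|\nabla u|^{\beta}\zeta$ (with $\beta$ close to $1$, forced by the requirement that the $\J$-term on the right be handled by Cauchy--Schwarz against $\|\J\|_{L^2}\|\nabla u\|_{L^2}$), the $t$-integrated output is $\int dt\,\bigl(\int_{\{u=t\}}\cdots\bigr)^{\frac{n-2}{n-1}}$, with the concave power \emph{outside} the inner integral. Converting this into $\int dt\int_{\{u=t\}}\cdots$, which is what the coarea formula identifies with an $L^{2+\gamma}$ norm of $\nabla u$, requires either a uniform-in-$t$ bound on the level-set integrals or an integrable weight in $t$; since $u$ is not a priori bounded, the range of $t$ is uncontrolled and neither ingredient is supplied by your outline. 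This is precisely the difficulty the paper's Steps~2 and~3 are designed to overcome, so the reverse H\"older inequality you want to feed into Gehring's lemma is never actually established.
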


\begin{proof}
Without loss of generality, we can assume that $\|\nabla u\|_{L^{2}(B_{1})}=1$ (this normalization will be particularly convenient in Step 3).
Let $\eta\in C^\infty_c(B_1)$ be a nonnegative cut-off function with $\eta\equiv 1$ in $B_{3/4}$.
\\

\noindent
{\bf Step 1:}  
{\it We show that} 
\begin{equation}\label{estdiv}
\int_{B_1} \big| {\rm div}(|\nabla u|\, \nabla u) \big| \eta^2 \,dx\le C.
\end{equation}

Set $\nu:=-\frac{\nabla u}{|\nabla u|}$ in the set $\{|\nabla u|\neq 0\}$, and $\nu=0$ in $\{|\nabla u|=0\}$. 
We begin from the pointwise identity
\begin{equation}\label{hwuighwiu}
 {\rm div}(|\nabla u|\, \nabla u)  = |\nabla u| \biggl( \sum_{ij} \frac{u_{ij}u_i u_j}{|\nabla u|^2} + \Delta u\biggr)  
=    -|\nabla u|\,{\rm tr}\big( D^2 u - (D^2u[\nu, \nu]) \nu\otimes\nu\big)   +  2|\nabla u|\Delta u
\end{equation}
in the set $\{|\nabla u|\neq0\}$.
Also, we note that $\J^2$ (as defined in Lemma \ref{conseqestab}) is larger or equal than half the squared Hilbert-Schmidt norm of the matrix $D^2 u - (D^2u[\nu, \nu])\,\nu\otimes\nu$,\footnote{This is easily seen by writing $D^2 u(x)$ in the orthonormal basis given by $\nu(x)$ and the principal directions of the level set of $u$ at $x$.} and hence there exists a dimensional constant $C$ such that
\begin{equation}
\label{hwuighwiu2}
\big|{\rm tr}\big( D^2 u - (D^2u[\nu, \nu]) \nu\otimes\nu\big)\big|
\leq C\J. 
\end{equation}
Furthermore, thanks to Lemma \ref{conseqestab} we obtain (note that, in the next integrals, 
{we can indistinctly integrate in $B_1$ or in $B_1\cap\{|\nabla u|\neq0\}$})
\[\begin{split}
 -\int_{B_{1}} 2|\nabla u|&\,\Delta u \, \eta^2\,dx
 = -\int_{B_1} |\nabla u|\,{\rm tr}\big( D^2 u - (D^2u[\nu, \nu]) \nu\otimes\nu\big)\,  \eta^2\,dx -   \int_{B_1} {\rm div}(|\nabla u|\, \nabla u)\,\eta^2\,dx
 \\
 &\le  C\left(\int_{B_1} |\nabla u|^2\eta^2\,dx\right)^{1/2} \left(\int_{B_1} \J^2 \eta^2\,dx\right)^{1/2} +   \int_{B_1} \,|\nabla u|\, \nabla u \cdot \nabla (\eta^2)\,dx\le C.
\end{split}\]
Hence, combining this bound with \eqref{hwuighwiu} and \eqref{hwuighwiu2},
and using again Lemma \ref{conseqestab} together with the fact that $\Delta u\le 0$, we get
\[\begin{split}
\int_{B_{1}} \big| {\rm div}(|\nabla u|\, \nabla u) \big| \eta^2 \,dx &\le \int_{B_{1}} -2|\nabla u|\,\Delta u\, \eta^2\,dx  + C \int_{B_{1}} |\nabla u| \J\, \eta^2\,dx \\
&\leq C+ C\left(\int_{B_1} |\nabla u|^2\eta^2\,dx\right)^{1/2} \left(\int_{B_1} \J^2 \eta^2\,dx\right)^{1/2} \le C,
\end{split}\]
as desired.\\

\noindent
{\bf Step 2:} {\it We show that, for a.e. $t\in \R$,}
\begin{equation}\label{ahgiohwiob1}
\int_{\{u=t\}\cap B_{3/4}} |\nabla u|^2 d\HH^{n-1}  \le  C.
\end{equation}

We claim that, for a.e. $t\in\R$, we have
\begin{equation}\label{claim-12}
 \int_{\{u=t\}\cap B_{3/4}} |\nabla u|^2 d\HH^{n-1} \leq \int_{\{u=t\}\cap B_{1}} |\nabla u|^2\eta^2 d\HH^{n-1} =- \int_{\{u>t\} \cap B_1 }  {\rm div}\big(|\nabla u|\, \nabla u \,\eta^2\big) \,dx.
\end{equation}
Note that this bound, combined with \eqref{estdiv}, implies \eqref{ahgiohwiob1}. So, we only need to prove the validity of \eqref{claim-12}.

To show \eqref{claim-12} some care is needed to deal with the divergence, since we cannot use Sard's theorem here ($u$ is only $C^2$). 
Thus, 
to prove it, we consider 
$s \mapsto H_\epsilon(s)$ a smooth approximation of the indicator function of $\R_+$,
so that $H_\epsilon'(s)\rightharpoonup^*\delta_0$ as $\epsilon\to 0.$
Then, for any given $t \in \R$ we can apply Lemma \ref{lem:coarea}  with $g=H'_\epsilon(u-t)|\nabla u|^2\eta^2$ to get
\begin{multline*}
-\int_{B_1}H_\epsilon(u-t){\rm div}\big(|\nabla u|\, \nabla u \,\eta^2\big) \,dx
=
\int_{B_1}H'_\epsilon(u-t)\,\nabla u\cdot \big(|\nabla u|\, \nabla u \,\eta^2\big)\,dx\\
=\int_{B_1}H'_\epsilon(u-t)|\nabla u|^3\eta^2\,dx
=\int_\R H'_\epsilon(\tau -t)\biggl(\int_{\{u=\tau\}\cap B_{1}} |\nabla u|^2\eta^2 d\HH^{n-1} \biggr)\,d\tau.
\end{multline*}
In particular, whenever $t$ is a Lebesgue point for the $L^1$ function $\tau \mapsto \int_{\{u=\tau\}\cap B_{1}} |\nabla u|^2\eta^2 d\HH^{n-1}$, letting $\epsilon\to 0$ we deduce \eqref{claim-12}, as claimed.
\\

\noindent
{\bf Step 3:} {\it Conclusion.}

First note that, by the standard Sobolev-Poincar\'e inequality, for some dimensional $p>2$ we have
\begin{equation}\label{ashgowo}
\left(\int_{B_{1}} |u-\bar u|^p \, dx\right)^{\frac 1 p } \le C\left( \int_{B_1} |\nabla u|^2\,dx\right)^{\frac 1 2 }=C,
\end{equation}
where $\bar u := \ave_{B_1} u$. 
Thus, using \eqref{ashgowo} and Lemma \ref{lem:coarea} with $g=\frac{|u-\bar u|^p}{|\nabla u|}1_{\{|\nabla u| \neq 0\}}$, we obtain 
\begin{equation}\label{ahgiohwiob2}
 \int_{\R} dt \int_{\{u=t\}\cap B_{1}\cap \{|\nabla u|\neq0\}}  |t-\bar u|^p \,|\nabla u|^{-1} \, d\HH^{n-1} =   \int_{B_1} |u-\bar u|^p1_{\{|\nabla u| \neq 0\}}  \, dx  \le C.
\end{equation}
Also, since $p>2$, we may choose dimensional constants $q>1$ and $\theta\in (0,1/3)$ such that $p/q = (1-\theta)/\theta$.  Thus, 
defining 
\[
h(t) : =  \max\big\{1, |t-\bar u|\big\}
\]
and using the coarea formula (Lemma \ref{lem:coarea}) and H\"older inequality (note that $p\theta -q(1-\theta)=0$),  we obtain
\[
\begin{split}
 &\int_{B_{3/4}} |\nabla u|^{3-3\theta} \,dx=\int_{\R} dt\int_{\{u=t\}\cap B_{3/4}\cap \{|\nabla u|\neq0\}}  h(t)^{p\theta -q(1-\theta)}  |\nabla u|^{-\theta + 2(1-\theta)} \,d\HH^{n-1} 
\\
&\le   \left(\int_{\R} dt\int_{\{u=t\}\cap B_1\cap \{|\nabla u|\neq0\}} \hspace{-2mm} h(t)^{p}  |\nabla u|^{-1}\,d\HH^{n-1} \right)^\theta  \bigg(\int_{\R} dt\int_{\{u=t\}\cap B_{3/4}} \hspace{-3mm}  h(t)^{-q} |\nabla u|^2 \,d\HH^{n-1}  \bigg)^{1-\theta}.
\end{split}
\]

Observe now that, thanks to \eqref{ahgiohwiob2} and  the definition of $h(t)$, we have
\begin{align*}
\int_{\R} dt\int_{\{u=t\}\cap B_1\cap \{|\nabla u|\neq0\}}    h(t)^{p}  |\nabla u|^{-1}\,d\HH^{n-1}& 
\leq \int_{\bar u-1}^{\bar u+1} dt\int_{\{u=t\}\cap B_1\cap \{|\nabla u|\neq0\}}      |\nabla u|^{-1}\,d\HH^{n-1} +C\\
&\leq |B_1|+C\leq  C.
\end{align*}
Also, since $q>1$ it follows that  $\int_{\R} h(t)^{-q}dt$ is finite, and thus  \eqref{ahgiohwiob1} implies that
\[
\int_{\R} dt \, h(t)^{-q} \int_{\{u=t\}\cap B_{3/4}}  |\nabla u|^2\,d\HH^{n-1}  \le C\int_{\R} h(t)^{-q}\,dt \le C.
\] 
Therefore, we have proved that
\[
\int_{B_{3/4}} |\nabla u|^{3-3\theta}  \,dx \le C
\]
for some dimensional constants  $\theta\in(0,1/3)$  and $C$, as desired.
\end{proof}

We conclude this section with the following useful result.

\begin{proposition}\label{prop:L1controlsW12}
Let $u\in C^2(B_1)$ be a stable solution of $-\Delta u=f(u)$ in $B_1\subset \R^n$,   with $f$ locally Lipschitz and nonnegative. 
Then
\[
\|\nabla u\|_{L^{2}(B_{1/2})}  \le C \|u\|_{L^{1}(B_{1})}, 
\]
where $C$ is a dimensional constant.
\end{proposition}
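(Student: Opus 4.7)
The plan is to combine the superharmonicity of $u$ (which follows from $f\ge 0$) with the stability inequality to transfer an $L^1$ bound on $u$ into an $L^2$ bound on $\nabla u$. I view the argument as two main moves: first extract $L^1$ control on $f(u)$ and a pointwise lower bound on $u$, and then close an energy identity with help from stability.

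For the first move, pick a smooth cutoff $\eta\in C^\infty_c(B_1)$ with $\eta\equiv 1$ on $B_{3/4}$. Testing the PDE against $\eta$ and integrating by parts twice yields
\[
\int_{B_1} f(u)\,\eta\,dx \;=\; -\int_{B_1} u\,\Delta \eta\,dx \;\le\; C\,\|u\|_{L^1(B_1)}.
\]
Since $f(u)\ge 0$, this gives $\|f(u)\|_{L^1(B_{3/4})}\le C\|u\|_{L^1(B_1)}$. Because $-\Delta u\ge 0$, the sub-mean-value property moreover gives $u(x)\ge \frac{1}{|B_{1/4}|}\int_{B_{1/4}(x)} u(y)\,dy \ge -C\|u\|_{L^1(B_1)}$ for every $x\in B_{3/4}$. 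Setting $M:=C\|u\|_{L^1(B_1)}$ and $v:=u+M$, the function $v$ is nonnegative on $B_{3/4}$, satisfies $-\Delta v=f(u)\ge 0$, and has $\|v\|_{L^1(B_{3/4})}\le C\|u\|_{L^1(B_1)}$.

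For the second move, pick a cutoff $\eta\in C^\infty_c(B_{3/4})$ with $\eta\equiv 1$ on $B_{1/2}$. Multiplying $-\Delta v=f(u)$ by $v\eta^2$ and integrating by parts yields, after Young's inequality to absorb the cross term,
\[
\tfrac{1}{2}\int|\nabla u|^2\eta^2\,dx \;\le\; \int f(u)\,v\,\eta^2\,dx \;+\; C\int v^2|\nabla \eta|^2\,dx.
\]
Decomposing $\int f(u)v\eta^2 = M\int f(u)\eta^2 + \int f(u)u\,\eta^2$, the first summand is already bounded by $C\|u\|_{L^1(B_1)}^2$ by the first step. The term $\int v^2|\nabla\eta|^2$ is supported in the annulus $B_{3/4}\setminus B_{1/2}$, which I would control by means of the standard Stampacchia/Calder\'on--Zygmund $W^{1,p}$ estimate ($p<n/(n-1)$) for the measure-data equation, combined with Sobolev embedding and the $L^1$ bound on $v$.

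The main obstacle is thus the genuinely nonlinear integral $\int f(u)u\,\eta^2$, which cannot be controlled by superharmonicity alone --- indeed, among general superharmonic functions the desired $L^2$ gradient bound is false (witness the fundamental solution $|x|^{-(n-2)}$, whose $\nabla$ is not in $L^2_{\rm loc}$). Here the stability of $u$ enters decisively: testing the stability inequality with $\xi=v\eta$ and subtracting off the PDE identity for $v$ multiplied by $v\eta^2$ yields
\[
\int\bigl(f'_-(u)\,v-f(u)\bigr)v\,\eta^2\,dx \;\le\; \int v^2|\nabla \eta|^2\,dx,
\]
which, paired with the energy identity above and the $W^{1,p}$ Stampacchia bound, closes the estimate and produces the desired $\|\nabla u\|_{L^2(B_{1/2})}\le C\|u\|_{L^1(B_1)}$.
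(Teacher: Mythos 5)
Your first move (the $L^1$ bound on $f(u)$ and the lower bound $u\ge -M$ from the super-mean-value property) is fine, but the argument does not close, for two separate reasons. The first gap is the term $\int v^2|\nabla\eta|^2\,dx$: measure-data (Stampacchia/Calder\'on--Zygmund) estimates give $v\in W^{1,p}_{\rm loc}$ only for $p<\frac{n}{n-1}$, hence by Sobolev $v\in L^q_{\rm loc}$ only for $q<\frac{n}{n-2}$, and $\frac{n}{n-2}\le 2$ as soon as $n\ge 4$. This threshold is sharp for the data you are using: a nonnegative superharmonic function with unit $L^1$ mass can have arbitrarily large $L^2$ norm on the annulus $B_{3/4}\setminus B_{1/2}$ (consider truncations of the Green function with pole in the annulus). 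So for $n\ge 4$ this step fails, and superharmonicity plus the $L^1$ bound cannot repair it; one needs the higher integrability of $\nabla u$ coming from stability, which is exactly the content of Proposition \ref{prop:W12+ep}.

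The second gap is more fundamental: your use of stability is logically inverted. Testing stability with $\xi=v\eta$ and subtracting the energy identity does yield $\int\bigl(f'_-(u)v-f(u)\bigr)v\,\eta^2\,dx\le\int v^2|\nabla\eta|^2\,dx$, but this is a \emph{lower} bound for the dangerous term $\int f(u)v\eta^2\,dx$ (in terms of $\int f'_-(u)v^2\eta^2\,dx$), whereas the energy identity needs an \emph{upper} bound for it. There is no way to reverse the inequality without structural hypotheses on $f$ such as convexity: here $f$ is merely locally Lipschitz and nonnegative, so $f'_-$ may be nonpositive everywhere (e.g.\ $f$ nonincreasing), in which case the stability inequality tested with any zeroth-order function $h(u)\eta$ is vacuous --- yet the conclusion of the proposition remains nontrivial. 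Any correct proof must therefore extract information from stability through first-order test functions, which is what the paper does: it tests stability with $|\nabla u|\eta$ (Lemma \ref{conseqestab}) to obtain $\|\nabla u\|_{L^{2+\gamma}(B_{3/4})}\le C\|\nabla u\|_{L^2(B_1)}$ (Proposition \ref{prop:W12+ep}), combines this with the $W^{1,1}$ estimate for superharmonic functions (Lemma \ref{strongconvergence}) and interpolation to get $\|\nabla u\|_{L^2(B_{1/2})}^2\le\epsilon\|\nabla u\|_{L^2(B_1)}^2+C\epsilon^{-1}\|u\|_{L^1(B_1)}^2$, and then absorbs the first term on the right by running the estimate at every scale and invoking the abstract covering Lemma \ref{lem_abstract}.
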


\begin{proof}
Since $-\Delta u\ge 0$ we can apply Lemma \ref{strongconvergence}(i) to the constant sequence $v_k=u$ to get
\[
\|\nabla u\|_{L^{1}(B_{1/2})}\leq  C\|u\|_{L^{1}(B_{1})}.
\]
Also, it follows from Proposition  \ref{prop:W12+ep} that
\[
\|\nabla u\|_{L^{2+\gamma}(B_{1/2})}  \le C \|\nabla u\|_{L^{2}(B_{1})}.
\]
Therefore, by H\"older and Young inequalities, for every $\epsilon>0$ we have 
\[\begin{split}
\|\nabla u\|_{L^{2}(B_{1/2})}^2& \le \|\nabla u\|_{L^{1}(B_{1/2})}^{\frac{\gamma}{1+\gamma}}\|\nabla u\|_{L^{2+\gamma}(B_{1/2})}^{\frac{2+\gamma}{1+\gamma}}
\le C\|u\|_{L^{1}(B_{1})}^{\frac{\gamma}{1+\gamma}}  \|\nabla u\|_{L^{2}(B_{1})}^{\frac{2+\gamma}{1+\gamma}} \\
&\le \epsilon  \|\nabla u\|_{L^{2}(B_{1})}^2 + \frac{C}{\epsilon} \|u\|_{L^{1}(B_{1})}^2.
\end{split}\]

Applying this estimate to the functions $u_{r,y}(x):=u(y+rx)$, where $B_r(y)\subset B_1$ (note that $u_{r,y}$ is a stable solution to the semilinear equation $-\Delta u_{r,y}=f_{r}(u_{r,y})$ in $B_1$ with $f_r(t)=r^2f(t)$, so all the previous results apply to $u_{r,y}$ as well), we conclude that
\begin{align*}
r^{n+2}\int_{B_{r/2}(y)}|\nabla u|^2\,dx&\leq 
\epsilon r^{n+2}\int_{B_{r}(y)}|\nabla u|^2\,dx
+ \frac{C}{\epsilon}\left(\int_{B_{r}(y)}|u|\,dx\right)^2\\
& \leq \epsilon r^{n+2}\int_{B_{r}(y)}|\nabla u|^2\,dx
+\frac{C}{\epsilon}\left(\int_{B_{1}}|u|\,dx \right)^2
\end{align*}
for every $\epsilon>0$.
By Lemma \ref{lem_abstract} applied with
$\sigma(B):=\|\nabla u\|_{L^2(B)}^2$, the result follows.
\end{proof}

\section{\for{toc}{Interior $C^\alpha$ estimate for $n\leq9$, and global estimate in convex domains}\except{toc}{Interior $C^\alpha$ estimate and global estimate in convex domains: proof of Theorem~\ref{thm:L1-CalphaW12} and Corollary~\ref{thm:convex}}}

\label{sect:int C0a}

We begin this section by proving that, under a doubling assumption on $|\nabla u|^2dx$, the radial derivative of a stable solution controls its full derivative.

\begin{lemma}\label{lem:22}
Let $u\in C^2(B_2)$ be a stable solution of $-\Delta u=f(u)$ in $B_2\subset \R^n$, with $f$ locally Lipschitz and nonnegative. 
Assume that
$$
\int_{B_1}|\nabla u|^2\,dx\geq \delta \int_{B_2}|\nabla u|^2\,dx
$$
for some $\delta>0$.
Then there exists a constant $C_\delta$, depending only on $n$ and $\delta$, such that
$$
\int_{B_{3/2}}|\nabla u|^2 \,dx\leq C_\delta \int_{B_{3/2}\setminus B_1}|x\cdot \nabla u|^2\,dx.
$$
\end{lemma}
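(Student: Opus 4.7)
The proof will proceed by contradiction and compactness. Assume the conclusion fails: then, for some $\delta>0$, there exist locally Lipschitz nonnegative $f_k$ and stable solutions $u_k\in C^2(B_2)$ of $-\Delta u_k=f_k(u_k)$ satisfying the doubling hypothesis together with $\int_{B_{3/2}}|\nabla u_k|^2\,dx\geq k\int_{B_{3/2}\setminus B_1}|x\cdot\nabla u_k|^2\,dx$. After subtracting $\ave_{B_2}u_k$ and dividing by $\|\nabla u_k\|_{L^2(B_2)}$ (both of which preserve stability up to rescaling the nonlinearity), I may assume $\|\nabla u_k\|_{L^2(B_2)}=1$ and $\ave_{B_2}u_k=0$. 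Poincar\'e then gives $\|u_k\|_{W^{1,2}(B_2)}\leq C$, and a scaled version of Proposition~\ref{prop:W12+ep} yields $\|\nabla u_k\|_{L^{2+\gamma}}\leq C$ on any compact subset of $B_2$.

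Up to a subsequence, $u_k\to u_\infty$ strongly in $L^1_{\rm loc}(B_2)$ and $\nabla u_k\rightharpoonup\nabla u_\infty$ weakly in $L^2_{\rm loc}(B_2)$. Since each $u_k$ is superharmonic (as $f_k\geq 0$), Lemma~\ref{strongconvergence} will upgrade the weak convergence of the gradients to strong convergence in $L^2_{\rm loc}(B_2)$, and the limit $u_\infty$ will itself be superharmonic in $B_2$. Passing the two inequalities to the limit yields
\[
\int_{B_1}|\nabla u_\infty|^2\,dx\geq\delta>0\quad\text{and}\quad x\cdot\nabla u_\infty=0\ \text{ a.e.\ in }\ B_{3/2}\setminus B_1.
\]

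To reach a contradiction, note that the vanishing of the radial derivative on the annulus makes the normal trace $\partial_\nu u_\infty = |x|^{-1}(x\cdot\nabla u_\infty)$ zero on $\partial B_r$ for a.e.\ $r\in(1,3/2)$. Applying Green's identity to the nonnegative Radon measure $-\Delta u_\infty$ gives
\[
(-\Delta u_\infty)(B_r)=-\int_{\partial B_r}\partial_\nu u_\infty\,d\HH^{n-1}=0,
\]
so the measure vanishes on $B_{3/2}$ and $u_\infty$ is harmonic (hence real-analytic) there. Then $x\cdot\nabla u_\infty$ is also harmonic on $B_{3/2}$ and vanishes on the open annulus, so by unique continuation it vanishes on all of $B_{3/2}$; this forces $u_\infty$ to be constant along every ray through the origin, and by continuity at $0$, equal to $u_\infty(0)$ throughout $B_{3/2}$. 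This contradicts $\|\nabla u_\infty\|_{L^2(B_1)}^2\geq\delta>0$.

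The hard step will be the strong $L^2_{\rm loc}$ convergence of the gradients, without which the doubling bound $\int_{B_1}|\nabla u_\infty|^2\geq\delta$ could degenerate in the limit; this relies crucially on the superharmonic compactness of Lemma~\ref{strongconvergence} paired with the $W^{1,2+\gamma}$ higher integrability of Proposition~\ref{prop:W12+ep}.
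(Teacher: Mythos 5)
Your proposal is correct, and its first half (contradiction, normalization, higher integrability via Proposition~\ref{prop:W12+ep}, strong $W^{1,2}_{\rm loc}$ convergence via Lemma~\ref{strongconvergence}, passage to the limit of the doubling bound and of $x\cdot\nabla u_k\to 0$ in the annulus) coincides with the paper's argument. Where you genuinely diverge is in showing that the limit is trivial. The paper stays entirely within the superharmonic framework: it uses the $0$-homogeneity of $v$ in the annulus to locate the infimum of $v$ over $B_{3/2}\setminus B_1$ at an interior point, applies the strong maximum principle to conclude $v\equiv c_0$ on the annulus, and then applies the maximum principle once more on $B_{3/2}$ to get $v\equiv c_0$ everywhere. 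You instead integrate the measure $-\Delta u_\infty$ over $B_r$ and use the vanishing flux $\int_{\partial B_r}\partial_\nu u_\infty\,d\HH^{n-1}=0$ for a.e.\ $r\in(1,3/2)$ to conclude that $-\Delta u_\infty$ has no mass in $B_{3/2}$, hence $u_\infty$ is harmonic there, after which real-analytic unique continuation of $x\cdot\nabla u_\infty$ finishes the job. Both routes work. Yours extracts strictly more information (full harmonicity of the limit, not just constancy), at the price of having to justify Green's identity on a.e.\ sphere when $\Delta u_\infty$ is only a Radon measure and $\nabla u_\infty\in L^2$; this is standard (mollify and pass to the limit along radii $r$ that are Lebesgue points of $s\mapsto\int_{\partial B_s}\partial_\nu u_\infty$ and carry no mass of the measure on $\partial B_r$), but you should say it. The paper's route avoids any unique-continuation or flux argument and uses only the mean value property and the strong maximum principle, which is why it transfers more readily to the boundary setting of Proposition~\ref{classification}, where the same annulus argument is reused.
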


\begin{proof}
Assume the result to be  false.
Then, there exists a sequence of stable solutions $u_k$ (with $f_k\geq 0$ varying)
such that
\begin{equation}\label{eq:doubling}
\int_{B_1}|\nabla u_k|^2\,dx\geq \delta \int_{B_2}|\nabla u_k|^2\,dx,
\quad 
\int_{B_{3/2}} |\nabla u_k|^2\,dx =1,
\quad\text{and}\quad \int_{B_{3/2}\setminus B_1}|x\cdot \nabla u_k|^2\,dx\to 0.
\end{equation}
Now, thanks to \eqref{eq:doubling},
\begin{equation}\label{3.1bis}
 \int_{B_2}|\nabla u_k|^2\,dx\leq \frac{1}{\delta} \int_{B_1}|\nabla u_k|^2\,dx\leq   \frac{1}{\delta}\int_{B_{3/2}} |\nabla u_k|^2\,dx = \frac{1}{\delta} \le C.
\end{equation}
Therefore, using Proposition \ref{prop:W12+ep} (rescaled from $B_1$ to $B_2$) we obtain 
\[
 \int_{B_{3/2}}|\nabla u_k|^{2+\gamma}\,dx \le C.
\]

Hence, the sequence of superharmonic functions 
\[
v_k : = u_k -\ave_{B_2}u_k
\]
satisfies 
\[\|v_k\|_{L^1(B_2)} \leq C\|v_k\|_{L^2(B_2)} \leq C\]
(thanks to H\"older and Poincar\'e inequalities, and by \eqref{3.1bis}), as well as
\[
\|\nabla v_k\|_{L^2(B_{3/2})}=  1,\quad  \| v_k\|_{W^{1,2+\gamma}(B_{3/2})}\le C,
\quad\text{and} 
\quad
\int_{B_{3/2}\setminus B_1}|x\cdot \nabla v_k|^2\,dx\to 0.
\]
Thus it follows from Lemma \ref{strongconvergence} applied with $r=\frac32<2=R$ that, up to a subsequence, $v_k \to  v$ strongly in $W^{1,2}(B_{3/2})$ where $v$ is a superharmonic function in $B_{3/2}$ satisfying 
\[
\|\nabla v\|_{L^2(B_{3/2})}=  1 \qquad \mbox{and} \qquad x\cdot \nabla v \equiv 0 \quad \mbox{a.e. in }B_{3/2}\setminus B_1.
\]
From the fact that $v$ is $0$-homogeneous and superharmonic in the annulus  $B_{3/2}\setminus B_1$, it follows that $v=c_0$ inside $B_{3/2}\setminus B_1$ for some constant $c_0\in \R$.
Indeed, by the mean value property (or by Theorem 8.17 of \cite{GT}, since $u\in W^{1,1}_{\rm loc}\subset L^{\frac{n}{n-1}}_{\rm loc}$ by Lemma \ref{strongconvergence}), $v$ is bounded from below in $B_{3/2}\setminus B_1$.
As a consequence, by $0$-homogeneity, $\inf_{B_{3/2}\setminus B_1}v = \inf_{B_{1/4}(x_0)}v$ for some point $x_0\in \partial B_{5/4}$.
Hence, by the strong maximum principle (Theorem 8.19 of \cite{GT}), $v$ is constant in $B_{3/2} \setminus B_1$, {as desired.

In particular, we have proved that $v_{|\partial B_1}=c_0$, so by the maximum principle for superharmonic functions we get $v \geq c_0$ inside $B_1$.

Combining all this together, we get that}
$$
v \geq c_0\quad \text{in }B_{3/2}\qquad \text{and}\qquad 
v \equiv c_0\quad \text{in }B_{3/2}\setminus B_1,
$$
and by the strong maximum principle for superharmonic functions we get $v\equiv c_0$ in $B_{3/2}$, a contradiction with $\|\nabla v\|_{L^2(B_{3/2})}=  1$.
\end{proof}

The following lemma will be used a couple of times in the paper to prove geometric decay of certain integral quantities satisfying appropriate recurrence relations.

\begin{lemma}\label{lem:33}
Let $\{a_j\}_{j\geq 0}$ and $\{b_j\}_{j\ge 0}$ be two sequences of nonnegative numbers satisfying  $a_0\le M$,  $b_0\le M$,  
\[
b_j\le b_{j-1}   \quad \mbox{and}\qquad a_j+ b_j \le L a_{j-1} \qquad \mbox{for\ all}\ \,j\ge 1,
\]
and
\begin{equation}\label{wnohnwono}
\mbox{if } \quad a_{j}  \ge \frac 1 2  a_{j-1}  \quad \mbox{then}\quad    b_{j} \le  L(b_{j-1} -b_j) \qquad \mbox{for\ all}\ \,j\ge 1,
\end{equation}
for some positive constants $M$ and $L$.
Then there exist $\theta\in (0,1)$ and $C>0$, depending only on $L$, such that 
\[
b_j \le  C M \theta^j \qquad \mbox{for\ all}\  \,j\ge0.
\]
\end{lemma}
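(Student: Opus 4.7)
The plan is to dichotomize each step according to which of the two mechanisms of decay is active. Without loss of generality I enlarge $L$ so that $L\ge 1$. For each $j\ge 1$, call $j$ of \emph{type A} if $a_j<\tfrac12 a_{j-1}$, and of \emph{type B} otherwise. On type-A steps, $a$ contracts by the definite factor $1/2$. On type-B steps, hypothesis \eqref{wnohnwono} activates and gives $(L+1)b_j\le L b_{j-1}$, so $b$ contracts by the factor $L/(L+1)<1$. Regardless of the type, we always have $a_j\le La_{j-1}$ (from $a_j+b_j\le L a_{j-1}$ and $b_j\ge0$) and $b_j\le b_{j-1}$ (by monotonicity).

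Let $\ell_j$ be the number of type-B indices in $\{1,\dots,j\}$ and set $k_j=j-\ell_j$. Iterating the four pointwise inequalities above yields the two basic bounds
\[
a_j\le (\tfrac12)^{k_j}L^{\ell_j}M=(\tfrac12)^{j}(2L)^{\ell_j}M,\qquad b_j\le \Big(\tfrac{L}{L+1}\Big)^{\ell_j}M.
\]
Plugging the first bound (applied at $j-1$, and using the monotonicity $\ell_{j-1}\le\ell_j$) into $b_j\le L a_{j-1}$ provides the alternative
\[
b_j\le 2LM\,(\tfrac12)^{j}(2L)^{\ell_j}.
\]

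Next I would fix a small parameter $\epsilon\in(0,1)$ and split on whether the fraction of type-B indices $\ell_j/j$ is large or small. If $\ell_j\ge \epsilon j$, the first bound on $b_j$ gives the geometric decay $b_j\le M\bigl(L/(L+1)\bigr)^{\epsilon j}$. If instead $\ell_j<\epsilon j$, the second bound gives $b_j\le 2LM\bigl((2L)^\epsilon/2\bigr)^{j}$. It then suffices to choose $\epsilon$ small enough that $(2L)^\epsilon<2$ (for instance $\epsilon=\tfrac{\log 2}{2\log(2L+2)}$, which depends only on $L$), and set
\[
\theta:=\max\Big\{\Big(\tfrac{L}{L+1}\Big)^{\epsilon},\ \tfrac{(2L)^\epsilon}{2}\Big\}<1,\qquad C:=2L,
\]
so that $b_j\le CM\theta^j$ in both cases.

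There is no genuine obstacle in this argument; the lemma is essentially combinatorial bookkeeping. The one subtle point worth flagging is that hypothesis \eqref{wnohnwono} only delivers geometric decay of $b$ on steps where $a$ fails to decay by itself, so the proof must keep track of the competition between the two decay mechanisms through the counter $\ell_j$ and trade one bound against the other by cutting at the right threshold $\epsilon$.
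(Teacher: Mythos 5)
Your proof is correct. All the individual steps check out: the iteration $a_j\le(\tfrac12)^{k_j}L^{\ell_j}M$ (using $a_j<\tfrac12 a_{j-1}$ on type-A steps and $a_j\le La_{j-1}$ on type-B steps), the bound $b_j\le(\tfrac{L}{L+1})^{\ell_j}M$ (using \eqref{wnohnwono} on type-B steps and monotonicity on type-A steps), and the alternative bound via $b_j\le La_{j-1}$ are all valid, the WLOG enlargement of $L$ is harmless for both hypotheses, and your choice of $\epsilon$ does make both branches of the dichotomy geometric with a rate depending only on $L$. The route is genuinely different from the paper's. The paper avoids any counting by introducing the single quantity $c_j:=a_j^{\ep}b_j$ and checking that it contracts by a fixed factor in \emph{both} cases (in the type-A case because $a_j$ drops by $\tfrac12$ and $b_j\le b_{j-1}$; in the type-B case because $b_j$ drops by $\tfrac{L}{1+L}$ and $a_j\le La_{j-1}$), with $\ep$ tuned so the two contraction factors match; the bound on $b_j$ is then recovered from $b_j^{1+\ep}\le L^{\ep}c_{j-1}$. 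That argument is shorter and produces the decay rate as the solution of one explicit equation, whereas yours makes the competition between the two decay mechanisms visible through the counter $\ell_j$ and resolves it by a threshold on the fraction of type-B steps. Both are elementary, both yield constants depending only on $L$, and neither is more general than the other; your version is perhaps easier to discover, the paper's easier to state.
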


\begin{proof}
Define, for $\ep>0$ to be chosen, 
\[c_j : = a_j^\ep b_j.\]
We consider two cases, depending whether $a_{j}  < \frac 1 2  a_{j-1}$ or not.

\noindent
{\it - Case 1:} If $a_{j}  < \frac 1 2  a_{j-1}$, then 
since $b_j\le b_{j-1}$ we get
\[c_j = a_j^\ep b_j \leq  2^{-\ep} a_{j-1}^\ep b_{j-1}  =  2^{-\ep} c_{j-1}.\]

\noindent
{\it - Case 2:} If $a_{j}  \ge \frac 1 2  a_{j-1}$ we can apply \eqref{wnohnwono} and we have
$b_{j} \le  L(b_{j-1} -b_j)$ or, equivalently,
\[
b_{j} \le  \frac{L}{1+L} b_{j-1}.
\]
Therefore, using that $a_j \le L a_{j-1}$, we have
\[c_j = a_j^\ep b_j \leq  L^\ep a_{j-1}^\ep \frac{L}{1+L} b_{j-1}  =  \theta^{1+\ep} c_{j-1},\]
where we choose first  $\ep>0$  such that  $2^{-\ep} = L^{1+\ep}/(1+L)$ (this can be done since we may assume from the beginning that $L>1/2$), and then we define $\theta:= (2^{-\ep})^{\frac{1}{1+\ep}} = L/(1+L)^{\frac{1}{1+\ep}}$. 

Hence, we have proven that in both cases $c_j \leq \theta^{1+\ep}c_{j-1}$ {for some $\theta \in (0,1)$}.
By iterating this estimate we conclude that
$c_j \le \theta^{(1+\ep)j}c_0$. 

Finally, recalling that $b_j\le La_{j-1}$, $b_j\le b_{j-1}$,  $a_0\le M$, and $b_0\le M$, {recalling the definition of $c_{j-1}$ and $c_0$} we obtain  
\[
b_j^{1+\ep} \le  L^\ep a_{j-1}^{\ep}b_{j-1} {= L^\ep c_{j-1} \le \frac{L^\ep}{\theta^{1+\ep}}  \, \theta^{(1+\ep)j}c_0 \le C \theta^{(1+\ep)j} M^{1+\ep}}
\]
and the lemma follows.
\end{proof}

We can now prove Theorem \ref{thm:L1-CalphaW12}.

\begin{proof}[Proof of Theorem \ref{thm:L1-CalphaW12}]
We begin by noticing that, combining Propositions \ref{prop:W12+ep} and \ref{prop:L1controlsW12}, we immediately get the bound
$$
\|\nabla u\|_{L^{2+\gamma}(B_{3/8})}\leq C\|u\|_{L^1(B_1)}.
$$
Hence \eqref{eq:W12g L1 int} follows by a classical scaling and covering argument.

We are left with proving \eqref{eq:Ca L1 int}.
For this we may assume that $3 \leq n \leq 9$. (Indeed, recall that in case $n\leq2$ one can easily reduce to the case $n=3$ by adding extra artificial variables. Note that the stability condition is preserved under this procedure).
Given $\rho \in (0,1),$ we define the 
quantities 
$$
\mathcal D(\rho):=\rho^{2-n}\int_{B_\rho}|\nabla u|^2\,dx
\qquad \textrm{and} \qquad
\mathcal R(\rho):=\int_{B_\rho}|x|^{-n}|x\cdot \nabla u|^2\,dx .
$$
We split the proof  of  \eqref{eq:Ca L1 int} in three steps.
\\

\noindent
{\bf Step 1:}  {\it We prove that there exists a dimensional exponent $\alpha>0$ such that
$$
\mathcal R(\rho)\leq C\rho^{2\alpha} \|\nabla u\|^2_{L^2(B_{1/2})}
$$
for all $\rho \in (0,1/4)$.}

Recall that, by \eqref{eq:11}, for every $\rho\in (0,1/4)$ it holds
\begin{equation}\label{agnwiohwio}
\mathcal R(\rho)\leq C\rho^{2-n}\int_{B_{3\rho/2}\setminus B_\rho}|\nabla u|^2\,dx.
\end{equation}
Hence, if $\mathcal D(\rho)\geq \frac12 \mathcal D(2\rho)$ then we can apply Lemma \ref{lem:22} with $\delta=1/2$ to the function $u(\rho \,\cdot)$, and we deduce that
$$
\rho^{2-n}\int_{B_{3\rho/2}}|\nabla u|^2\,dx \leq C\rho^{-n} \int_{B_{3\rho/2}\setminus B_\rho}|x\cdot \nabla u|^2\,dx\leq C \bigl(\mathcal R(3\rho/2)-\mathcal R(\rho)\bigr)
$$
for some dimensional constant $C$.
Combining this bound with \eqref{agnwiohwio} and using that $\mathcal R$ is nondecreasing, we deduce that
\begin{equation}\label{ahsiogwio}
\mathcal R(\rho)\leq C\bigl(\mathcal R(2\rho)-\mathcal R(\rho)\bigr)
\qquad \text{provided $ \mathcal D(\rho)\geq {\textstyle \frac12}  \mathcal D(2\rho)$}.
\end{equation}
Thus, if we define $a_j : = \mathcal D(2^{-j-2})$, $b_j := \mathcal R(2^{-j-2})$ we have, for some dimensional constant $L>0$, 
\begin{itemize}
\item $b_j\le b_{j-1}$ for all $j\ge 1$ (since $\mathcal R$ is nondecreasing); 
\item $a_j+ b_j \le L a_{j-1}$   for all $j\ge 1$ (by \eqref{agnwiohwio});
\item if $a_{j}  \ge \frac 1 2  a_{j-1}$ then $b_{j} \le  L(b_{j-1} -b_j)$, for all  $ j\ge 1$ (by \eqref{ahsiogwio}).
\end{itemize}
Therefore, by Lemma \ref{lem:33} we deduce that 
\[
b_j \le C M \theta^{j},
\]
where $\theta\in(0,1)$ and  $M:= a_0 + b_0 \le C \|\nabla u\|^2_{L^2(B_{1/2})}$ (here we used again \eqref{agnwiohwio} in order to bound $b_0$).

Choosing $\alpha>0$ such that $2^{-2\alpha}=\theta$, Step 1 {follows easily.}
\\

\noindent
{\bf Step 2:}  {\it We show that 
\begin{equation}\label{ESTEST}
[u]_{C^\alpha(\overline B_{1/8})}\leq C\|\nabla u\|_{L^2(B_{3/4})},
\end{equation}
where $\alpha$ and $C$ are positive dimensional constants.}

Applying Step 1 to the function $u_y(x):=u(x+y)$ with $y\in B_{1/4}$, since $B_{1/2}(y)\subset B_{3/4}$ we get
$$
\int_{B_\rho(y)}|x-y|^{-n}|(x-y)\cdot \nabla u|^2\,dx\leq C\rho^{2\alpha}\int_{B_{3/4}}|\nabla u|^2\,dx\qquad \mbox{for all }\,\rho \leq 1/2.
$$
In particular,
$$
\rho^{2-n}\int_{B_\rho(y)}\Bigl|\frac{x-y}{|x-y|}\cdot \nabla u\Bigr|^2\,dx\leq C\rho^{2\alpha}\int_{B_{3/4}}|\nabla u|^2\,dx\qquad \mbox{for all }\,y \in B_{1/4},\,\rho \leq 1/2.
$$
Then, given $z \in B_{1/8}$, we can
average the above inequality with respect to $y\in B_{\rho/4}(z)$ to get
$$
\rho^{2-n}\int_{B_{\rho/8}(z)}|\nabla u|^2\,dx\leq C\rho^{2\alpha}\int_{B_{3/4}}|\nabla u|^2\,dx\qquad \mbox{for all }\,\rho \leq 1/2.
$$
Since $z \in B_{1/8}$ is arbitrary, by classical estimates on  Morrey spaces {(see for instance \cite[Theorem 7.19]{GT}) we deduce  \eqref{ESTEST}.}
\\

\noindent
{\bf Step 3:} {\it Proof of  \eqref{eq:Ca L1 int}.}

Note that, using Proposition \ref{prop:L1controlsW12} and a standard scaling and covering argument, we have
$$
\|\nabla u\|_{L^2(B_{3/4})}\leq C\|u\|_{L^1(B_1)}.
$$
Hence, it follows by Step 2 that $[u]_{C^\alpha(\overline B_{1/8})}\leq C\|u\|_{L^1(B_1)}$.
Also, by classical interpolation estimates, we have the bound
$$\|u\|_{L^\infty(B_{1/8})}\leq C\left([u]_{C^\alpha(\overline B_{1/8})}+\|u\|_{L^1(B_{1/8})}\right).$$
Combining these estimates, we conclude that 
$$
\|u\|_{C^\alpha(\overline B_{1/8})}\leq C\|u\|_{L^1(B_{1})}.
$$
Finally, \eqref{eq:Ca L1 int} follows by a classical scaling and covering argument.
\end{proof}

We conclude the section by proving global regularity in convex domains.

\begin{proof}[Proof of Corollary \ref{thm:convex}]
First of all, since $f \geq 0$ we have that $u$ is superharmonic, so by the maximum principle $u \geq 0$ in $\Omega$.

Since $\Omega$ is a {bounded convex domain of class $C^1$}, by the classical moving planes method there exists $\rho_0>0$, depending only on $\Omega$, such that
\begin{equation}
\label{eq:bdd near bdry}
u(x)\leq \max_{\Gamma_0}u \qquad \mbox{for all }\,x\in  N_{0},
\end{equation}
where $N_0:=\Omega \cap \{y\,:\,{\rm dist}(y,\partial\Omega)<\rho_0\}$ and $\Gamma_0:=\{y \in \Omega\,:\,{\rm dist}(y,\partial\Omega)=\rho_0\}$.\footnote{{Here we are using that, in any convex $C^1$ domain, we can start the classical moving planes method at any boundary point.

We note that the classical moving planes method is usually stated for strictly convex $C^1$ domains. If $\Omega$ is merely convex (instead of strictly convex), then the boundary may contain a piece of a hyperplane.
Still, by a simple contradiction argument one can show that, given any boundary point, there exist hyperplanes that separate a small cap around this point from their reflected points, and such that the reflected points are contained inside $\Omega$.
This suffices to use the moving planes method in a neighborhood of any boundary point.}}

Hence, it follows by Theorem \ref{thm:L1-CalphaW12} that $u \leq C\|u\|_{L^1(\Omega)}$  inside $\Omega\setminus N_0$, where $C$ depends only on $\Omega$ and $\rho_0$.
Thus, recalling \eqref{eq:bdd near bdry}, we conclude that $0 \leq u \leq C\|u\|_{L^1(\Omega)}$ inside $\Omega$.
\end{proof}

\section{A general closedness result for stable solutions with convex nondecreasing nonlinearities}
\label{sect:compact}

The goal of this section is to establish a very strong closedness property for stable solutions to equations with convex, nondecreasing, and nonnegative nonlinearities. 
As mentioned in the introduction, in addition to its own interest, this result will play a crucial role in the proof of the global regularity result of Theorem \ref{thm:globalCalpha}.

Define
\[
\mathcal C :=  \big\{ f: \R \to [0,+\infty] \,:\ \mbox{$f$ is lower semicontinuous, nondecreasing, and convex} \big\}.
\]
Note that functions $f\in \mathcal C$ are nonnegative but are allowed to take the value $+\infty$. 
This fact is important, since limits of nondecreasing convex nonlinearities $f_k:\R\to \R$ could become $+\infty$ in an interval $[M,\infty)$; this is why, in $\mathcal C$, we must allow $f$ to take the value $+\infty$.

For $f\in \mathcal C$ and $t\in\R$ such that $f(t)<+\infty$, the following is the definition and a property for $f'_-(t)$:
\begin{equation}\label{eq:f' convex}
f'_{-}(t) := \lim_{h\downarrow 0} \frac{f(t)-f(t-h)}{h} \ge \frac{f(t_2)-f(t_1)}{t_2-t_1} \quad \mbox{for all }t_1 < t_2  \leq t.
\end{equation}
If $f(t)=+\infty$ for some $t \in \R$, then we simply set $f'_-(t)=+\infty$.

Given an open set $U\subset \R^n$, we define 
\begin{equation}\label{class-S}
\mathcal S(U):= \left\{u\in W^{1,2}_{\rm loc}(U): \begin{array}{c} u \mbox{ is a stable weak solution of } \\ -\Delta u=f(u) \mbox{ in } U, \mbox{ for some } f\in \mathcal C \end{array}\right\}.
\end{equation} 
The meaning of weak solution is that of Definition \ref{defi1.1}.
In particular, since $f(u) \in L^1_{\rm loc}(U)$ then $f(u)$ is finite a.e., and since $f$ is nondecreasing we deduce that $f<+\infty$ on $(-\infty,\sup_{U}u)$.
Note also that, similarly, since $f'_-\geq0$ and $f$ is convex, we have that $f'_-<+\infty$ in $(-\infty,\sup_{U}u)$.

The following theorem states that,  given an open set $U\subset \R^n$, the set $\mathcal S(U)$ is 
closed in $L^1_{\rm loc}(U)$. 
This is particularly surprising since no bound is required on the nonlinearities.

\begin{theorem}
\label{thm:stability}
Let $U\subset \R^n$ be an open set. 
Let $u_k\in \mathcal S(U)$, and assume that 
$u_k\to u$ in $L^1_{\rm loc}(U)$ for some $u\in L^1_{\rm loc}(U)$. 

Then, $u\in \mathcal S(U)$
and the convergence $u_k\to u$ holds in $W^{1,2}_{\rm loc} (U)$.
If, in addition, $n\leq 9$ then the convergence also holds in $C^0(U)$.
\end{theorem}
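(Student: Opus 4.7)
The plan is to combine the interior a priori estimates of Theorem \ref{thm:L1-CalphaW12} with a Helly-type selection on the convex nondecreasing nonlinearities $\{f_k\}$, and then to pass to the limit in both the PDE and the stability inequality using the convexity/monotonicity structure.

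First I would observe that each $f_k\in\mathcal C$ is locally Lipschitz on the open interval $\{f_k<+\infty\}$, which contains the essential range of $u_k$; extending $f_k$ to a locally Lipschitz function on $\R$ (this affects neither the equation nor the stability inequality) allows one to apply Theorem \ref{thm:L1-CalphaW12} to each $u_k$. Combined with the uniform $L^1_{\rm loc}$ bound coming from $u_k\to u$ in $L^1_{\rm loc}$, this yields uniform $W^{1,2+\gamma}_{\rm loc}(U)$ bounds on $\{u_k\}$, and, when $n\le 9$, uniform $C^\alpha_{\rm loc}(U)$ bounds. Testing the equation against a smooth cut-off gives uniform $L^1_{\rm loc}$ bounds on $f_k(u_k)=-\Delta u_k\ge 0$. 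Since the $u_k$ are superharmonic, Lemma \ref{strongconvergence} combined with the higher integrability of $\nabla u_k$ upgrades the $L^1_{\rm loc}$ convergence to strong $W^{1,2}_{\rm loc}$ convergence (along a subsequence, hence along the full sequence by uniqueness of the limit), and Arzelà--Ascoli gives $C^0_{\rm loc}$ convergence when $n\le 9$.

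Next, by a Helly diagonal extraction on the convex nondecreasing lsc functions $\{f_k\}$, along a subsequence there exists $f\in\mathcal C$ such that $f_k\to f$ locally uniformly on the open interval $I:=\mathrm{int}\{f<+\infty\}$, and the left-derivatives $f_{k,-}'\to f_-'$ at every point of differentiability of $f$ in $I$. Setting $T:=\sup I$, on any compact $K\subset\{u<T\}$ the monotonicity of $f_k$ yields the sandwich
\[
f_k\bigl(u(x)-\eta\bigr)\le f_k(u_k(x))\le f_k\bigl(u(x)+\eta\bigr)
\]
for $\eta>0$ small and $k$ large (using $u_k\to u$ a.e., uniformly when $n\le 9$); since $u(x)\pm\eta$ takes values in a compact subset of $I$ on which $f_k\to f$ uniformly, dominated convergence gives $f_k(u_k)\to f(u)$ in $L^1_{\rm loc}(\{u<T\})$. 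Since the uniform $L^1_{\rm loc}$ bound on $f_k(u_k)$ forces $f(u)\in L^1_{\rm loc}(U)$, this identification extends to all of $U$ and one deduces $-\Delta u=f(u)$ weakly in $U$.

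Finally, combining $u_k\to u$ a.e.\ with $f_{k,-}'\to f_-'$ a.e.\ on $I$ gives $f_{k,-}'(u_k)\to f_-'(u)$ a.e.\ on $\{u<T\}$, and Fatou's lemma applied to $\int_U f_{k,-}'(u_k)\,\xi^2\,dx\le\int_U|\nabla\xi|^2\,dx$ yields the stability of $u$, completing the proof that $u\in\mathcal S(U)$. The main obstacle is the middle step: no uniform bound on the $f_k$ is assumed, and they may degenerate by blowing up to $+\infty$ outside a shrinking interval, so the extraction of $f\in\mathcal C$ and the $L^1_{\rm loc}$ passage $f_k(u_k)\to f(u)$ depend crucially on convexity, monotonicity and nonnegativity of the $f_k$, together with the local uniform convergence of convex functions on the interior of the finiteness domain of their limit.
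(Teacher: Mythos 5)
Your overall strategy (a priori estimates for the $u_k$, compactness of the nonlinearities, then passage to the limit in the equation and in the stability inequality) is the same as the paper's, but two of your middle steps have genuine gaps. First, the extraction of $f$: Helly/Arzel\`a--Ascoli for the convex nondecreasing $f_k$ requires a \emph{uniform pointwise bound} $\limsup_k f_k(m)<\infty$ for each $m<M:=\sup_U u$, and this does not follow from convexity, monotonicity and nonnegativity alone (take $f_k\equiv k$). It must be extracted from the equation itself: the paper does this by picking a Lebesgue point $x_0$ of $u$ with $u(x_0)>m$ and using Jensen's inequality, $f_k(m)\le f_k(\fint_{B_\ep(x_0)}u_k)\le \fint_{B_\ep(x_0)}(-\Delta u_k)\le C\ep^{-2}\fint_{B_{2\ep}(x_0)}|u_k|$. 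Without this your interval $I$ is not known to contain $(-\infty,M)$, and the claim that the identification $-\Delta u=f(u)$ ``extends to all of $U$'' is unsupported. Second, the $L^1_{\rm loc}$ convergence $f_k(u_k)\to f(u)$ cannot be obtained by the sandwich plus dominated convergence: when $M=+\infty$ the bound $f_k(u_k)\le f_k(u+\eta)$ gives no integrable dominating function (the paper instead derives $\int f_k(u_k)u_k\eta\le C$ by testing the equation with $|u_k|\eta$, which yields uniform integrability of $f_k(u_k)$ on $\{u_k>j\}$); and when $M<+\infty$ the set $\{u_k>M\}$, where $f_k$ can blow up, is not covered by your sandwich at all --- the paper needs a separate argument with the test function $(u_k-M-\delta)_+\eta$ together with the uniform $W^{1,2+\gamma}$ bounds, plus the stability inequality to control $f_k(u_k)-f_k(u_k-3\delta)$.

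There is also a gap at the very first step. Elements of $\mathcal S(U)$ are only $W^{1,2}_{\rm loc}$ weak solutions, whereas Theorem \ref{thm:L1-CalphaW12} is an a priori estimate for $C^2$ solutions. Your proposed fix --- extend $f_k$ to a locally Lipschitz function on $\R$ and invoke elliptic regularity --- only works when $(f_k)'_-$ stays bounded up to $\sup_U u_k$; if $(f_k)'_-(\sup_U u_k)=+\infty$ (which is allowed in $\mathcal C$ and is exactly the interesting case), no locally Lipschitz extension agreeing with $f_k$ on the range of $u_k$ exists, and $u_k$ need not be $C^2$. The paper devotes Proposition \ref{prop:approximation} to this point, constructing truncated nonlinearities $f_\ep\le f$ and an increasing family of genuinely $C^2$ stable solutions $u_\ep\uparrow u$ by a monotone iteration, and showing (via the stability inequality and a De Giorgi intermediate-value argument) that the limit of the $u_\ep$ is $u$ itself and not some strictly smaller solution. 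Your final paragraph correctly identifies where the difficulty lies, but the proof as written does not resolve it.
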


{For the proof of this result we shall use} the interior estimates of Theorem \ref{thm:L1-CalphaW12}.
However we proved these interior estimates for $C^2$ solutions, while solutions in the class $\mathcal S(U)$ are in general only in $W^{1,2}$ ---notice that it may happen that $f(u(x_0))=f(\sup_U u)=+\infty$ for some $x_0\in U$. 
Thus, we will need to prove first that the interior estimates of Theorem \ref{thm:L1-CalphaW12} extend to all weak solutions in the class $\mathcal S (B_1)$ (see Corollary \ref{CalphaforS} below).
For this, we need the following useful approximation result.

\begin{proposition}\label{prop:approximation}
Let $f\in \mathcal C$ and assume that $u\in W^{1,2}(B_1)$ is a  stable weak solution of $-\Delta u = f(u)$ in $B_1$, with $f(u)\in L^1(B_1)$.

Then, one of the following holds:
\begin{itemize}
\item[(i)] $u\in C^2(B_1)$ and $f$ is real valued and Lipschitz on $(-\infty,\sup_{B_1}u)$.\footnote{Throughout the paper, whenever we say that a function $g$ is Lipschitz on some set $A$, we mean uniformly Lipschitz (even if the set $A$ is open), namely
$$
\sup_{x,y\in A,\,x\neq y}\frac{|g(x)-g(y)|}{|x-y|}<+\infty.
$$
This is in contrast with the terminology ``$g$ is locally Lipschitz'', which means that $g$ is Lipschitz on any compact subset of its domain of definition.
}
\item[(ii)] There exist a family of  nonlinearities $\{f_\ep\}_{\ep \in (0,1]} \subset \mathcal C$ and a family of stable solutions $\{u_\ep\}_{\ep \in (0,1]}\subset C^2(B_1)\cap W^{1,2}(B_1)$  of 
\[\left\{
\begin{array}{cl}
-\Delta u_\ep = f_\ep(u_\ep) \quad &\mbox{in }B_1
\\
u_\ep = u  &\mbox{on }\partial B_1
\end{array}\right.
\]
such that $f_\ep \leq f$, $u_\ep \leq u$, and both $f_\ep \uparrow f$ $($pointwise in $\R)$ and $u_\ep \uparrow u$  $($a.e. and weakly in $W^{1,2}(B_1))$ as $\ep\downarrow 0$. Furthermore, $f_\ep$ is real valued and Lipschitz on $(-\infty,\sup_{B_{r}}u_\ep]$ for every  $r<1$.
\end{itemize}
\end{proposition}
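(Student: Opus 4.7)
The plan is to split on the behavior of $f$ near $\sup_{B_1}u$. Set $T^\star:=\sup\{t\in\R:f'_-(t)<+\infty\}\in(-\infty,+\infty]$; since $f\in\mathcal C$, $f'_-$ is nondecreasing, so $f$ is real valued with bounded left derivative on $(-\infty,T^\star)$. In the regular regime $\sup_{B_1}u<T^\star$ I show that alternative (i) holds: convexity gives $f'_-(t)\le f'_-(\sup_{B_1}u)=:L<+\infty$ for all $t\le\sup_{B_1}u$, hence $f$ is $L$-Lipschitz on $(-\infty,\sup_{B_1}u)$; since $u\le\sup_{B_1}u$ a.e.\ on $B_1$ we have $f(u)\in L^\infty(B_1)$, and a standard Calder\'on--Zygmund/Sobolev/Schauder bootstrap yields $u\in C^{2,\alpha}_{\rm loc}(B_1)$.

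Otherwise I construct (ii). For $\ep\in(0,1]$ I define the Moreau-type regularization
\[
f_\ep(t):=\sup\bigl\{at+b\,:\,0\le a\le 1/\ep,\ as+b\le f(s)\ \forall s\in\R\bigr\},
\]
which, being a supremum of affine functions with nonnegative bounded slopes, is convex, nondecreasing and $(1/\ep)$-Lipschitz; obviously $0\le f_\ep\le f$, and since every lower semicontinuous convex function is the supremum of its affine minorants, $f_\ep\uparrow f$ pointwise as $\ep\downarrow 0$. I then build $u_\ep$ by monotone iteration with Dirichlet data $u|_{\partial B_1}$: starting from $v_0:=$ the harmonic extension of $u|_{\partial B_1}$ (which is $\le u$ by superharmonicity of $u$), inductively solve $-\Delta v_{k+1}=f_\ep(v_k)$ with $v_{k+1}=u$ on $\partial B_1$. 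Since $f_\ep\ge 0$ one has $v_1\ge v_0$; induction plus monotonicity of $f_\ep$ gives that $(v_k)$ is nondecreasing, while $v_k\le u$ follows from comparison because $u$ is a supersolution of $-\Delta v=f_\ep(v)$. Monotone convergence (justified by $f_\ep(v_k)\le f_\ep(u)\le f(u)\in L^1$) yields $u_\ep:=\lim_k v_k$, a weak solution of $-\Delta u_\ep=f_\ep(u_\ep)$ with $u_\ep=u$ on $\partial B_1$ and $u_\ep\le u$. As the minimal-iteration limit for a convex nondecreasing nonlinearity, $u_\ep$ is stable by the classical subsolution-perturbation argument; Lipschitzianity of $f_\ep$ and the upper bound $u_\ep\le u$ fuel an elliptic bootstrap producing $u_\ep\in C^2(B_1)$, and the Lipschitz property of $f_\ep$ on $(-\infty,\sup_{B_r}u_\ep]$ is automatic from its global Lipschitzianity.

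The hard part is to show $u_\ep\uparrow u$. Monotonicity in $\ep$ is automatic from comparison: for $\ep_1\le\ep_2$ one has $f_{\ep_1}\ge f_{\ep_2}$, so $u_{\ep_1}$ is a supersolution of the $f_{\ep_2}$-problem and the minimal iteration for $f_{\ep_2}$ stays below $u_{\ep_1}$, giving $u_{\ep_2}\le u_{\ep_1}$. Setting $u_\star:=\lim_{\ep\downarrow 0}u_\ep\le u$, passing to the limit in the weak formulation (using $f_\ep(u_\ep)\le f(u)\in L^1$ and monotone convergence) shows that $u_\star$ is itself a stable weak solution of $-\Delta u_\star=f(u_\star)$ with $u_\star=u$ on $\partial B_1$. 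Setting $w:=u-u_\star\ge 0$ and testing the stability of $u$ against (an approximation of) $w$, convexity of $f$ yields the pinched chain
\[
\int_{B_1}f'_-(u)w^2\,dx \le \int_{B_1}|\nabla w|^2\,dx = \int_{B_1}\bigl(f(u)-f(u_\star)\bigr)w\,dx \le \int_{B_1}f'_-(u)w^2\,dx,
\]
forcing equality everywhere, $f$ to be affine on $[u_\star(x),u(x)]$ for a.e.~$x\in\{w>0\}$, and $w$ to solve $-\Delta w=f'_-(u)w$ with $w=0$ on $\partial B_1$. Combined with the stability of $u_\star$ and a strong maximum principle argument, this forces $w\equiv 0$, i.e., $u_\star=u$. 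Making this last step watertight---especially handling the possible affine pieces of $f$ and the situation where $u_\star$ is only marginally stable---is where I expect the main technical difficulty to lie.
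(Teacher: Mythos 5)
Your overall strategy (dichotomy on the behaviour of $f$ at $\sup_{B_1}u$, monotone iteration from the harmonic extension with Lipschitz-regularized nonlinearities, and a pinching argument via the stability of $u$ to identify the limit) is the same as the paper's, and your Moreau-type envelope is a legitimate alternative to the paper's choice $f_\ep=(1-\ep)f$ (linearly truncated when $\sup_{B_1}u=+\infty$). However, there are two genuine gaps, and they interact. First, your dichotomy is the wrong one: the correct split is $f'_-(\sup_{B_1}u)<+\infty$ versus $f'_-(\sup_{B_1}u)=+\infty$, not $\sup_{B_1}u<T^\star$ versus not. Since $f(u)\in L^1$ forces $f'_-<+\infty$ on $(-\infty,\sup_{B_1}u)$, one always has $\sup_{B_1}u\le T^\star$, so your ``otherwise'' branch is $\sup_{B_1}u=T^\star$, which still contains configurations with $f'_-(\sup_{B_1}u)<+\infty$ (e.g.\ $f$ affine and $u$ an unbounded $W^{1,2}$ solution). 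In those configurations no contradiction is available at the end of the argument, because $f$ genuinely can be affine on the whole range of $u$.

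Second, and more seriously, the step you yourself flag as delicate --- deducing $w\equiv 0$ from the pinched chain --- is not proved, and the route you propose (``stability of $u_\star$ plus a strong maximum principle argument'') cannot work as stated: if $f$ is affine with slope exactly $\lambda_1(B_1)$ on the relevant range, then $w=c\,\Phi_1$ satisfies $-\Delta w=f'_-(u)w$, $w>0$, $w=0$ on $\partial B_1$, and is perfectly compatible with the (marginal) stability of both $u$ and $u_\star$; no maximum principle excludes it. The paper closes this step differently: the Harnack inequality applied to the superharmonic function $u-u_\star$ gives $u-u_\star\ge\delta_r>0$ in $B_r$, so the intervals $[u_\star(x),u(x)]$ on which $f$ is forced to be affine have length at least $\delta_r$; since $W^{1,2}$ functions cannot jump (De Giorgi's intermediate-value lemma), the union of these intervals covers the \emph{entire} interval $(\inf_{B_r}u_\star,\sup_{B_r}u)$, hence $f$ is linear on $(\inf_{B_1}u_\star,\sup_{B_1}u)$ --- and this contradicts $f'_-(\sup_{B_1}u)=+\infty$, which is available precisely because the dichotomy was set up as in the first point. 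You need both the corrected case split and this covering/rigidity argument (or an equivalent) to make the identification $u_\star=u$ watertight; a maximum-principle argument alone will not do it. (Minor further points: the uniform $W^{1,2}$ bound on the iterates $v_k$ should be proved, e.g.\ by testing against $u-v_k$ as in the paper, and the stability of $u_\ep$ is most easily obtained from $u_\ep\le u$ together with $(f_\ep)'_-(u_\ep)\le f'_-(u)$ rather than by invoking minimality.)
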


\begin{proof}
If $f'_{-}(\sup_{B_1} u) < +\infty$,\footnote{If $\sup_{B_1} u=+\infty$, we define $f'_{-}(\sup_{B_1} u):=\lim_{t\to +\infty} f'_{-}(t)$.} then $f$ is real valued and Lipschitz on $(-\infty,\sup_{B_1}u)$ (here we use that $f$ is nonnegative, nondecreasing, and convex).
Thus $|f(u)|\leq C(1+|u|)$, and  by classical elliptic regularity \cite{GT} $u$ is of class $C^2$ inside $B_1$.
Thus, in this case, (i) in the statement holds.

As a consequence, in order to establish (ii) we may assume that 
\begin{equation}
\label{eq:f'}
f'_{-}( \sup_{B_1} u ) = +\infty.
\end{equation}

\noindent
{\bf Step 1:} {\it Construction of $f_\ep$ and $u_\ep$.}

Given $\ep \in (0,1]$, if $\sup_{B_1} u <+\infty$ we define $f_\ep$ by $f_\ep(t):=(1-\ep)f(t)$. Instead, when  $\sup_{B_1} u =+\infty$ we set
$$
f_\ep(t):=\left\{\begin{array}{cl}
(1-\ep)f(t) \quad &\mbox{ for }t \leq \ep^{-1},
\\
(1-\ep) \left( f(\ep^{-1})+f'_-(\ep^{-1})(t-\ep^{-1}) \right)  &\mbox{ for }t \geq \ep^{-1}.
\end{array}\right.
$$
Note that in both cases $f_\ep\in \mathcal C$, $f_\ep \leq f$, and $f_\ep \uparrow f$  pointwise as $\ep\downarrow 0$.

We  now construct the functions $u_\ep$. We first define the function $u_\ep^{(0)}$ to be the harmonic extension of $u$.
Indeed, since $u\in W^{1,2}(B_1)$, the Dirichlet energy $\int_{B_1} |\nabla v|^2$ admits a minimizer $u_\ep^{(0)} $ in the convex set 
$\{v \in W^{1,2}(B_1)\,:\,v-u\in  W^{1,2}_0(B_1)\}$. 
Note that $u_\ep^{(0)}\leq u$ since $u$ is weakly superharmonic. 

To construct $u_\ep$ for $\ep\in (0,1)$ we start a monotone iteration by defining, for $j\ge 1$, the function $u_\ep^{(j)}$ as the solution to the linear problem 
\begin{equation}
\label{eq:u eps j}
\left\{\begin{array}{cl}
-\Delta u_\ep^{(j)} = f_\ep(u_\ep^{(j-1)}) \quad &\mbox{in }B_1
\\
u_\ep^{(j)} = u  &\mbox{on }\partial B_1.
\end{array}\right.
\end{equation}
Note that we can start the iteration since $0\leq f_\ep ( u_\ep^{(0)} )\leq f_\ep (u)\leq f(u)\in L^1(B_1)$. All the other problems also make sense, since we have that $u_\ep^{(j)}\le u$ for all $j\ge 0$. Indeed,
\begin{equation*}
\begin{split}
-\Delta (u-u_\ep^{(j)})  &=  f(u)-f_\ep(u_\ep^{(j-1)})= \big(f(u)-f_\ep(u)\big) + \big(f_\ep(u)- f_\ep(u_\ep^{(j-1)})  \big) \\
&\geq f_\ep(u)- f_\ep(u_\ep^{(j-1)})\qquad \mbox{for all }\,j \geq 1,
\end{split}
\end{equation*}
and since  $f_\ep$ is nondecreasing it follows
by induction that $u_\ep^{(j)}\le u$.

To prove that the sequence is monotone, note that, since $f_\ep\ge0$, it follows by the maximum principle that $u_\ep^{(1)} \ge u_\ep^{(0)}$.
Also, since $f_\ep$ is nondecreasing, the inequality
\[
-\Delta (u_\ep^{(j)}-u_\ep^{(j-1)}) =  f_\ep(u_\ep^{(j-1)})-f_\ep(u_\ep^{(j-2)})  \qquad \mbox{for all }\,j \geq 2
\]
proves, by induction on $j$, that $u_\ep^{(j)}\ge u_\ep^{(j-1)}$. 

Analogously, since  $f_\ep \leq f_{\ep'}$ for $\ep'<\ep$, using that $u_\ep^{(0)}=u_{\ep'}^{(0)}$ and that 
$$
-\Delta (u_{\ep'}^{(j)}-u_\ep^{(j)})  = f_{\ep'}(u_{\ep'}^{(j-1)})-f_\ep(u_\ep^{(j-1)})\qquad \mbox{for all }\,j \geq 1,
$$
again by induction we get
\begin{equation}\label{hgwiohiowh 2}
u_\ep^{(j)}\le u_{\ep'}^{(j)}\qquad \text{ for all $j\ge 0$ and $\ep'<\ep$.}
\end{equation}

\smallskip

\noindent
{\it Claim 1: the functions $u_\ep^{(j)}$ belong to $W^{1,2}(B_1)$ and their $W^{1,2}$-norms are uniformly bounded in $j$ and $\ep$.}\\
Indeed, 
since
\[\int_{B_1} \nabla u_\ep^{(j)}\cdot\nabla (u-u_\ep^{(j)})\,dx=\int_{B_1}f_\ep(u_\ep^{(j-1)})(u-u_\ep^{(j)})\,dx\geq0\]
we have
\[\int_{B_1} |\nabla (u-u_\ep^{(j)})|^2\,dx\leq \int_{B_1} \nabla u\cdot\nabla (u-u_\ep^{(j)})\,dx \leq \|\nabla u\|_{L^2(B_1)}\|\nabla (u-u_\ep^{(j)})\|_{L^2(B_1)},\]
and therefore
\begin{equation}\label{unifW12}
\|\nabla u_\ep^{(j)}\|_{L^2(B_1)} \leq \|\nabla (u-u_\ep^{(j)})\|_{L^2(B_1)}+\|\nabla u\|_{L^2(B_1)}\leq 2\|\nabla u\|_{L^2(B_1)}.
\end{equation}
Since $u_\ep^{(j)}-u$ vanishes on $\partial B_1$,
the claim follows by Poincar\'e inequality.

\smallskip

Thanks to Claim 1, we can define
\[
u_\ep := \lim_{j\to\infty }u_\ep^{(j)} \leq u,
\]
where $u_\ep$ is both a pointwise limit (since the sequence is nondecreasing in $j$) and a weak $W^{1,2}(B_1)$ limit.
Then we have that $u_\ep\in W^{1,2}(B_1)$ is a weak solution of
\[\left\{
\begin{array}{cl}
-\Delta u_\ep = f_\ep(u_\ep) \quad &\mbox{in }B_1
\\
u_\ep = u  &\mbox{on }\partial B_1.
\end{array}\right.
\]
We now want to show that $u_\ep$ is of class $C^2$. For this, we prove the following:

\smallskip

\noindent
{\it Claim 2: the functions  $u_\ep^{(j)}$ belong to $C^{2,\beta}_{\rm loc}(B_1)$, for every $\beta\in (0,1)$, 
and their norms in this space are uniformly bounded with respect to $j$. In addition, $f_\ep$ is real valued and Lipschitz on $(-\infty,\sup_{B_{r}}u_\ep]$, for every $r<1$.
}\\
To prove this result, we distinguish two cases, depending whether $\sup_{B_1}u$ is finite or not.\\
{\it - Case (i): $\sup_{B_1}u<+\infty$}.\\
Note that, since in this case $f_\ep = (1-\ep)f$, we have $-\Delta (u-u_\ep) \ge \ep f(u) \geq 0$.
Also, $f(u)$ cannot be identically zero, since $f'_{-}(\sup_{B_1} u) = +\infty$ by \eqref{eq:f'}. Thus, it follows by the Harnack inequality that, for all $r\in (0,1)$ and $\ep>0$, there exists a constant $\delta_{\ep,r}>0$ such that $u_\ep \le u- \delta_{\ep,r}$ in $B_r$.

In addition, as already observed after \eqref{class-S}, the fact that $-\Delta u=f(u)$
with  $f\in \mathcal{C}$ leads to $f<+\infty$ on $(-\infty,\sup_{B_1}u).$
Hence, using again that $f\in \mathcal{C}$ (thus $f\geq 0$ is convex and nondecreasing), we obtain that
\[
 \| f \|_{C^{0,1}((-\infty,t])} \le C(f,t) <\infty \qquad \mbox{for all } t<\sup_{B_1} u.
\]
Therefore, since
$u_\ep^{(j)} \leq u_\ep \leq \sup_{B_1} u-\delta_{\ep,r}$ in $B_r$,
by standard elliptic regularity (see for instance \cite[Chapter 6]{GT}) we obtain that $u_\ep^{(j)} \in C^{2,\beta}_{\rm loc}(B_1)$ for all $\beta\in (0,1)$, uniformly in~$j$, as desired.
Furthermore, since $u_\ep \le u- \delta_{\ep,{r}}$ in $B_{r}$, $f_\ep$ is real valued and Lipschitz on $(-\infty,\sup_{B_{r}}u_\ep]$.
\\
{\it - Case (ii): $\sup_{B_1}u=+\infty$}.\\
In this case we note that, by construction, $f_\ep$ is globally Lipschitz on the whole $\R$ and $|f_\ep(t)|\leq C_\ep(1+|t|)$.
Hence, thanks to the uniform $W^{1,2}$ bound on $u_\ep^{(j)}$ (see \eqref{unifW12}),
using \eqref{eq:u eps j} and 
standard elliptic regularity (see for instance \cite[Chapter 6]{GT}), it follows by induction on~$j$ that $u_\ep^{(j)} \in C^{2,\beta}_{\rm loc}(B_1)$ for all $\beta\in (0,1)$, uniformly with respect to $j$.

\smallskip

Thanks to Claim 2, we have that $u_\ep$ is the limit of a sequence of functions uniformly  bounded in $C^{2,\beta}_{\rm loc}(B_1)$, and hence $u_\ep \in C^2(B_1)$.
\\

\noindent
{\bf Step 2:} {\it The solutions $u_\ep$ are stable.}

Since $u_\ep\leq u$, it follows by the definition of $f_\ep$ that $f'_-(u)\geq (f_\ep)'_-(u_\ep)$ in $B_1$. Hence,
the stability of $u$ gives that
\[\int_{B_1}|\nabla \xi|^2\,dx\geq \int_{B_1}f'_-(u)\,\xi^2\,dx\geq  \int_{B_1}(f_\ep)'_-(u_\ep)\,\xi^2\,dx\]
for all $\xi\in C^\infty_c(B_1)$.
Thus, $u_\ep$ is stable.
\\

\noindent
{\bf Step 3:} {\it $u_\ep \uparrow u$ as $\ep \downarrow 0$.}

Recall that $u_\ep \leq u_{\ep'}\leq u$ for $\ep'<\ep$, and that the functions $u_\ep$ are uniformly bounded in $W^{1,2}$
(see \eqref{hgwiohiowh 2} and \eqref{unifW12}).
Assume by contradiction that $u_\ep \uparrow  u^* \leq  u$  as $\ep \downarrow 0$ and $u^* \not\equiv  u$. Then, by the convergence of $f_\ep$ to $f$, $u^*$ solves
\[
-\Delta u^* = f(u^*)\quad \textrm{in}\ B_1, \qquad  u-u^* \in W^{1,2}_0(B_1),\quad  u-u^* \geq 0,\qquad  u-u^*\not\equiv 0,
\]
and thus, by the Harnack inequality applied to the superharmonic function $u-u^*$,
for any $r<1$ there exists a positive constant $\delta_r$ such that
 $u-u^*\geq \delta_r>0$ inside $B_r$. On the other hand,
testing the stability inequality for $u$ with $u-u^*$ we obtain 
\[
\int_{B_1} \big( f(u)-f(u^*)\big)(u-u^*)\,dx =  \int_{B_1} |\nabla (u-u^*)|^2\,dx \ge \int_{B_1} f'_{-}(u) (u-u^*)^2\,dx.
\]
Recalling \eqref{eq:f' convex} and that $u> u^*$, this leads to $f'_{-}(u) (u-u^*)^2 = \big( f(u)-f(u^*)\big)(u-u^*) $ a.e. in $B_1$ and (since $f$ is convex) we deduce that $f$ is linear in the interval $[u^*(x),u(x)]$ for a.e. $x\in B_1$.

Let $r<1$ and note that the intervals $[u^*(x),u(x)]$ have length at least $\delta_r$ for a.e.\ $x \in B_r$.
Hence, since $u$ and $u^*$ belong to $W^{1,2}(B_r)$, the union of these intervals as $x$ varies a.e.\ in $B_r$ covers all the interval $(\inf_{B_r} u^*, \sup_{B_r} u)$.\footnote{
Here it is crucial that the union of these intervals covers the full interval $(\inf_{B_r} u^*, \sup_{B_r} u)$, and not just a.e.
A way to see this is to note that, since the intervals $[u^*(x),u(x)]$ have length at least $\delta_r$, if this was not true then the essential image of $u$  (resp. $u^*$) would miss an interval of length $\delta_r$ inside its image. However, $W^{1,2}$ functions cannot jump between two different values, as can be seen by using the classical De Giorgi's intermediate value lemma  (see for instance \cite[Lemma 1.4]{CV}, or \cite[Lemma 3.13]{Xavi2} for an even simpler proof). 
}
This leads to $f$ being linear on the whole interval $(\inf_{B_r} u^*, \sup_{B_r} u)$.
 Letting $r\to 1$, this gives that $f$ is linear on  $(\inf_{B_1} u^*, \sup_{B_1} u)$,
contradicting $f'_-(\sup_{B_1} u) = +\infty$ (recall \eqref{eq:f'}) and concluding the proof.
\end{proof}

As a consequence, we find the following.

\begin{corollary}\label{CalphaforS}
The interior estimates of Theorem \ref{thm:L1-CalphaW12}  extend to all weak solutions in the class $\mathcal S (B_1)$.
\end{corollary}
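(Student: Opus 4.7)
The plan is to reduce to Theorem~\ref{thm:L1-CalphaW12} via the approximation provided by Proposition~\ref{prop:approximation}. The main difficulty is that a solution $u \in \mathcal{S}(B_1)$ need not be $C^2$ and the nonlinearity $f \in \mathcal{C}$ may take the value $+\infty$, so Theorem~\ref{thm:L1-CalphaW12} does not apply directly.

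First, apply Proposition~\ref{prop:approximation} to $u$. In case (i), $u \in C^2(B_1)$ and $f$ is real valued and Lipschitz on $(-\infty, \sup_{B_1} u)$. Since $f \in \mathcal{C}$ is lower semicontinuous and nondecreasing, $f$ extends to be Lipschitz on $(-\infty, \sup_{B_1} u]$, and we can then extend it linearly above $\sup_{B_1} u$ (when finite) to obtain a nonnegative, nondecreasing, locally Lipschitz $\tilde{f}\colon \R \to \R$ that agrees with $f$ on the range of $u$. Then $u$ solves $-\Delta u = \tilde{f}(u)$ in $B_1$ and is stable with respect to $\tilde{f}$ (the stability inequality only involves $\tilde{f}'_-(u)=f'_-(u)$), so Theorem~\ref{thm:L1-CalphaW12} applied to $u$ and $\tilde{f}$ yields the desired bounds.

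In case (ii), let $\{u_\ep\}_{\ep\in(0,1]}$ be the approximating sequence provided by Proposition~\ref{prop:approximation}, so that $u_\ep \in C^2(B_1) \cap W^{1,2}(B_1)$ are stable solutions of $-\Delta u_\ep = f_\ep(u_\ep)$ with $f_\ep$ Lipschitz on $(-\infty, \sup_{B_r} u_\ep]$ for every $r < 1$, and $u_\ep \uparrow u$ a.e. For any closed ball $\overline{B_{2\rho}(x_0)} \subset B_1$ (chosen inside some $B_r$ with $r<1$), the function $u_\ep$ is bounded there, and we can replace $f_\ep$ by a locally Lipschitz extension on $\R$ via linear continuation above $\sup_{\overline{B_{2\rho}(x_0)}} u_\ep$, without changing the equation or stability of $u_\ep$ on $B_{2\rho}(x_0)$. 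Applying Theorem~\ref{thm:L1-CalphaW12} on $B_{2\rho}(x_0)$ (rescaled) and covering $B_{1/2}$ by finitely many such balls, with dimensional constants, we obtain
\[
\|\nabla u_\ep\|_{L^{2+\gamma}(B_{1/2})} \le C\|u_\ep\|_{L^1(B_1)},
\]
and, when $n \le 9$, also $\|u_\ep\|_{C^\alpha(\overline{B_{1/2}})} \le C\|u_\ep\|_{L^1(B_1)}$, uniformly in $\ep$.

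Finally, we pass to the limit as $\ep \downarrow 0$. By construction $u^{(0)} \le u_\ep \le u$, where $u^{(0)}$ is the harmonic extension of $u|_{\partial B_1}$, and both $u^{(0)}$ and $u$ belong to $W^{1,2}(B_1) \subset L^1(B_1)$; dominated convergence then gives $u_\ep \to u$ in $L^1(B_1)$, so $\|u_\ep\|_{L^1(B_1)} \to \|u\|_{L^1(B_1)}$. The uniform $L^{2+\gamma}$ bound extracts a subsequence with $\nabla u_\ep \rightharpoonup \nabla u$ weakly in $L^{2+\gamma}(B_{1/2})$, and lower semicontinuity of the norm yields the $W^{1,2+\gamma}$ estimate for $u$. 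When $n \le 9$, the uniform $C^\alpha$ bound combined with Arzel\`a--Ascoli forces $u_\ep \to u$ uniformly on $\overline{B_{1/2}}$, producing the $C^\alpha$ bound for $u$ and concluding the proof.
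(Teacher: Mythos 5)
Your proposal is correct and follows essentially the same route as the paper: extend the nonlinearity past the relevant supremum to a globally Lipschitz, nondecreasing, convex function in case (i), and in case (ii) apply the interior a priori estimates of Theorem~\ref{thm:L1-CalphaW12} to the $C^2$ approximations $u_\ep$ on interior balls and pass to the limit as $\ep\downarrow 0$. The only difference is cosmetic (you cover $B_{1/2}$ by small balls where the paper rescales from $B_1$ to $B_r$ and lets $r\to1$), and your limit passage via monotonicity, weak lower semicontinuity, and Arzel\`a--Ascoli is a correct spelling-out of the paper's terser conclusion.
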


\begin{proof}
In case (i) of Proposition \ref{prop:approximation}, when $\sup_{B_1} u <+\infty$ we have that the limits of $f(t)$ and $f_{-} '(t)$, as $t \uparrow  \sup_{B_1} u$, exist and are finite. This follows from $f$ being convex and Lipschitz in $(-\infty, \sup_{B_1} u)$, as stated in case (i). Thus, we can extend $f$ on $[\sup_{B_1}u,+\infty)$  to a globally Lipschitz, nondecreasing, convex function in all of $\R$, and then apply Theorem \ref{thm:L1-CalphaW12}. Obviously, there is no need to make the extension if $\sup_{B_1} u =+\infty$.

In case (ii) of Proposition \ref{prop:approximation}, take $r<1$. Since $f_\ep$ is Lipschitz on $(-\infty,\sup_{B_{r}}u_\ep]$, we can extend $f_\ep$ on $[\sup_{B_{r}}u_\ep,+\infty)$ to a globally Lipschitz, nondecreasing, convex function in all of~$\R$, and then apply Theorem \ref{thm:L1-CalphaW12} (rescaled from $B_{1}$ to $B_r$) to $u_\ep.$ Letting $\ep \downarrow 0$, this proves the validity of the interior estimates of Theorem \ref{thm:L1-CalphaW12} inside $B_{r/2}$, and letting $r\to 1$ yields the result.
\end{proof}

We can now prove Theorem \ref{thm:stability}.

\begin{proof}[Proof of  Theorem \ref{thm:stability}]
By assumption we have a sequence  $u_k\in \mathcal S(U)$ of weak solutions of $-\Delta u_k = f_k(u_k)$, with $f_k\in \mathcal C$ and $U$ an open set of  $\R^n$, such that $u_k\to u$ in $L^1_{\rm loc}(U)$. 
Then, by  Corollary \ref{CalphaforS} and Lemma \ref{strongconvergence}, the previous convergence also holds  in $W^{1,2}_{\rm loc} (U)$.
Also, up to a subsequence, we can assume that $u_k\to u$ a.e.
If $n\leq9$, the same results give that $u_k\to u$ locally uniformly in $U$.
{However, since in order to prove $u\in \mathcal S(U)$ we are not assuming $n\leq 9$, we cannot use this information.}
\\

\noindent
{\bf Step 1:} {\it A compactness estimate on $f_k$}.

Let $M:=\sup_U u\in (-\infty,+\infty]$, and let $m<M$.
We claim that
\begin{equation}\label{hwioghwo}
\limsup_{k\to\infty} f_k(m)<\infty.
\end{equation}
Indeed, let $x_0\in U$ be a Lebesgue point for $u$ such that\footnote{The existence of such a point is guaranteed again by the fact that $W^{1,2}$ functions cannot jump,
as noted in Step 3 of the proof of Proposition~\ref{prop:approximation}.}
\[m<u(x_0)<M,\]
and set $\delta:=u(x_0)-m>0.$
Since $x_0$ is a Lebesgue point, there exists $\ep_0>0$ such that $\overline{B}_{2\ep_0}(x_0)\subset U$ and
$$
\ave_{B_\ep(x_0)}|u(x)-u(x_0)|\,dx \le \frac{\delta}2\qquad \textrm{for all }\,\ep \in (0,2\ep_0].
$$ 
In particular, for $k$ sufficiently large we have
$$
m\leq \ave_{B_\ep(x_0)}u_k\,dx\leq \ave_{B_\ep(x_0)}|u_k|\,dx \le |u(x_0)|+\delta\qquad \textrm{for all }\,\ep \in (0,2\ep_0].
$$ 
Hence, since $f_k$ is nondecreasing and convex, applying
Jensen's inequality and  Lemma \ref{strongconvergence}(a) we get 
\[\begin{split}f_k(m) &\leq 
f_k\biggl(\ave_{B_{\ep_0}(x_0)}u_k\,dx\biggr)\leq 
\ave_{B_{\ep_0}(x_0)} f_k(u_k)\,dx =\ave_{B_{\ep_0}(x_0)} (-\Delta u_k)\,dx\\
&\leq C\ep_0^{-2} \ave_{B_{2{\ep_0}}(x_0)} |u_k|\,dx \leq C\ep_0^{-2}\bigl(|u(x_0)|+\delta\bigr)\end{split}\]
for a dimensional constant $C$ and all $k$ sufficiently large, proving \eqref{hwioghwo}.

Notice now that, since
\[
(f_k)_-'(m)\leq \frac{f_k(m+\delta)-f_k(m)}{\delta}\leq \frac{f_k(m+\delta)}{\delta}
\]
and $m+\delta=u(x_0)<M$, \eqref{hwioghwo} applied with $m$ replaced by $m+\delta$
implies that the functions $f_k$ are uniformly Lipschitz on $(-\infty, m]$.
Hence, by Ascoli-Arzel\`a Theorem and a diagonal argument, we deduce the existence of a function $f:(-\infty, M)\to \R$ such that that $f_k\to f$ uniformly on $(-\infty,m]$ for every $m<M$.
Also, since $f_k$ are nonnegative, nondecreasing, and convex, extending $f$ to all $\R$ by defining $f(M): = \lim_{t\uparrow M} f(t)$ and $f(t) := +\infty$ for  $t>M$, it is easy to check that $f\in \mathcal C$.
\\

\noindent
{\bf Step 2:} {\it $-\Delta u = f(u)$ in $U$.}

For every $\xi\in C^\infty_c(U)$ we have
\begin{equation}
\label{eq:PDE k}
\begin{split}
\int_U \nabla u\cdot \nabla \xi\,dx& = -\int_U u\Delta\xi\,dx = -\lim_{k\to \infty} \int_U u_k\Delta\xi\,dx 
 =\lim_{k\to \infty} \int_U \nabla u_k\cdot \nabla \xi\,dx \\
 & =\lim_{k\to \infty} \int_U f_k(u_k) \,\xi\,dx.
\end{split}\end{equation}
Note that, since since $f_k \to f$ locally uniformly on $(-\infty,M)$ and $u_k\to u$ a.e., it follows that
\begin{equation}
\label{eq:fk to f}
f_k(u_k)\to f(u)\qquad \text{a.e. inside }\{u <M\}.
\end{equation}
In the following,  $\eta \in C^\infty_c(U)$ denotes a nonnegative cut-off function such that $\eta=1$ on the support of~$\xi$.

\vspace{2mm}

\noindent
{\it Case 1:} $M=+\infty$. 
We have
$$
\int_{{{\rm supp}(\xi)}}f_k(u_k)u_k\,dx \leq 
\int_{U}f_k(u_k)|u_k|\eta\,dx 
=\int_{U}(-\Delta u_k)|u_k|\eta\,dx
=\int_U \nabla u_k\cdot\nabla (|u_k|\eta)dx \leq C
$$
for some constant $C$ independent of $k$, 
where the last bound follows from the $W^{1,2}_{\rm loc}$ boundedness of $u_k$.
In particular, given a continuous function $\varphi:\R\to [0,1]$ such that $\varphi=0$ on $(-\infty,0]$ and $\varphi=1$ on $[1,+\infty)$,
 we deduce that
\begin{equation}
\label{eq:fk geq j}
\begin{split}
\int_{{{\rm supp}(\xi)}}
f_k(u_k)\,\varphi(u_k-j)\,dx&\leq 
\int_{{{\rm supp}(\xi)}\cap \{u_k>j\}}
f_k(u_k)\,dx\\
&\leq 
\frac1j\int_{{{\rm supp}(\xi)}\cap \{u_k>j\}}
f_k(u_k)u_k\,dx\leq \frac{C}{j}\qquad \textrm{for all }\,j>1.
\end{split}\end{equation}
Therefore, by Fatou's Lemma (since $u_k \to u$ a.e. and $f_k(u_k)\to f(u)$ a.e. by \eqref{eq:fk to f} and $M=+\infty$), we also have
\begin{equation}
\label{eq:f geq j}
\int_{{{\rm supp}(\xi)}}
f(u)\,\varphi(u-j)\,dx\leq  \frac{C}{j}\qquad \textrm{for all }\,j>1.
\end{equation}
Furthermore, using again that $u_k \to u$ a.e. and $f_k(u_k)\to f(u)$ a.e., by dominated convergence we get
$$
f_k(u_k)\,[1-\varphi(u_k-j)]\to f(u)\,[1-\varphi(u-j)] \qquad \text{in $L^1({\rm supp}(\xi))$}.
$$
This, combined with \eqref{eq:fk geq j} and \eqref{eq:f geq j}, gives that
\[
\begin{split}
\limsup_{k\to \infty}\int_{{{\rm supp}(\xi)}}
|f_k(u_k)-f(u)|\,dx
\leq 
\limsup_{k\to \infty}& \int_{{{\rm supp}(\xi)}}
f_k(u_k)\,\varphi(u_k-j)\,dx\\
&+\int_{{{\rm supp}(\xi)}}
f(u)\,\varphi(u-j)\,dx \leq \frac{2C}{j}.
\end{split}\]
By the arbitrariness of $j$, this proves that
$$
f_k(u_k)\to f(u) \qquad \text{in $L^1({\rm supp}(\xi))$}.
$$
Recalling \eqref{eq:PDE k}, this concludes the proof of Step 2 in the case $M=+\infty$.

\vspace{2mm}

\noindent
{\it Case 2:} $M<+\infty$. 
Let $\delta>0$.
Since $(u_k-M-\delta)_+\geq \delta$ inside $\{u_k>M+2\delta\}$ and $-\Delta u_k=f_k(u_k) \geq 0$, we have
\[\begin{split}
\delta \int_{{\rm supp}(\xi)\cap \{u_k>M+2\delta\}}f_k(u_k)\,dx &=
\delta \int_{{\rm supp}(\xi)\cap \{u_k>M+2\delta\}}-\Delta u_k\,dx \\
&\leq \int_{{\rm supp}(\xi)\cap \{u_k>M+2\delta\}}-\Delta u_k\,(u_k-M-\delta)_+\,dx\\
&\leq \int_{U}-\Delta u_k\,(u_k-M-\delta)_+\eta\,dx\\
&=\int_{U\cap \{u_k>M+\delta\}}|\nabla u_k|^2\eta\,dx+\int_{U}\nabla u_k\cdot \nabla \eta\,(u_k-M-\delta)_+\,dx.
\end{split}\]
Note that, thanks to the higher integrability estimate \eqref{eq:W12g L1 int} applied to $u_k$ (recall Corollary~\ref{CalphaforS}), the functions $u_k$ 
are uniformly bounded in $W^{1,2+\gamma}({\rm supp}(\eta))$.
Thus, since  $1_{\{u_k>M+\delta\}}\to 0$ and $(u_k-M-\delta)_+\to 0$ a.e., we deduce from H\"older's inequality that the last two integrals tend to $0$ as $k \to \infty$, and therefore
\begin{equation}
\label{eq:uk M delta}
\lim_{k\to \infty}\int_{{\rm supp}(\xi)\cap \{u_k>M+2\delta\}}f_k(u_k)\,dx=0.
\end{equation}
On the other hand, we note that $f_k(u_k-3\delta)\leq f_k(M-\delta)\leq C_\delta$ inside ${\rm supp}(\xi)\cap \{u_k\leq M+2\delta\}$,
for some constant $C_\delta$ depending on $\delta$ but not on $k$.  
Hence, thanks to \eqref{eq:uk M delta} and the uniform convergence of $f_k$ to $f$ on $(-\infty,M-\delta]$, we get (recall that $u \leq M$ a.e.)
\[\begin{split}
&\lim_{k\to \infty} \int_U f_k(u_k)\, \xi\,dx =\lim_{k\to \infty} \int_{U\cap \{u_k\leq M+2\delta\}}f_k(u_k)\,\xi\,dx\\
&= \lim_{k\to \infty} \bigg\{ \int_{U\cap \{u_k\leq M+2\delta\}} f_k(u_k-3\delta)\, \xi\,dx +\int_{U\cap \{u_k\leq M+2\delta\}} \bigl(f_k(u_k)-f(u_k-3\delta)\bigr)\, \xi\,dx\bigg\} \\
&= \int_U f(u-3\delta) \xi\,dx +\lim_{k\to \infty} \int_{U\cap \{u_k\leq M+2\delta\}} \big(f_k(u_k)-f(u_k-3\delta)\bigr) \xi\,dx.\end{split}\]
Now, by \eqref{eq:f' convex}, the definition of $f'_-$, and the stability of $u_k$, we have (recall that $\eta=1$ on the support of $\xi$)
\[\left|\int_U \big(f_k(u_k)-f(u_k-3\delta)\bigr) \xi\,dx\right|\leq 3\delta \int_U (f_k)'_-(u_k)|\xi|\,dx\leq 3\delta \|\xi\|_\infty \int_U (f_k)'_-(u_k)\eta^2\,dx\leq C\delta\]
and therefore, letting $\delta\to0$, by  monotone convergence we find
\[\lim_{k\to \infty} \int_U f_k(u_k)\, \xi\,dx= \int_U f(u)\, \xi\,dx.\]
Recalling \eqref{eq:PDE k},
this proves that $-\Delta u = f(u)$ inside $U$ in the case $M<+\infty$.
\\

\noindent
{\bf Step 3:} {\it $u$ is stable.} 

Thanks to the convexity of $f_k$, it follows from \eqref{eq:f' convex}
and the stability inequality for $u_k$ that, for any $\delta>0$,
$$
\int_U \frac{f_k(u_k-2\delta)-f_k(u_k-3\delta)}{\delta}\, \xi^2\,dx\leq \int_U |\nabla \xi|^2 \,dx\qquad \mbox{ for all } 
\xi\in C_c^{\infty}(U).
$$
Hence, since $u_k\to u$ a.e. in $U$ and 
$f_k\to f$ locally uniformly in $(-\infty,m]$ for all $m<M$, and since $f_k$ is nondecreasing, it follows by Fatou's lemma applied to the sequence $1_{\{u_k\leq \min\{j,M\}+\delta\}}\, \delta^{-1}\bigl(f_k(u_k-2\delta)-f_k(u_k-3\delta)\bigr)$ that, for any $j>1,$
\[\begin{split}
\int_{U\cap \{u\leq \min\{j,M\}\}} &\frac{f(u-2\delta)-f(u-3\delta)}{\delta} \,\xi^2\,dx\leq  \\
&\leq \liminf_{k\to \infty} \int_{U\cap  \{u_k\leq \min\{j,M\}+\delta\}} \frac{f_k(u_k-2\delta)-f_k(u_k-3\delta)}{\delta}\, \xi^2\,dx\\
&\leq \liminf_{k\to \infty} \int_U \frac{f_k(u_k-2\delta)-f_k(u_k-3\delta)}{\delta}\, \xi^2\,dx\\
&\leq 
\int_U |\nabla \xi|^2 \,dx\qquad \mbox{ for all } 
\xi\in C_c^{\infty}(U).
\end{split}\]
Since 
$$
\frac{f(t-2\delta)-f(t-3\delta)}{\delta}\uparrow f'_-(t)\qquad \text{as $\delta \to 0$,}\quad \mbox{for all }\,t\leq \min\{j,M\},
$$
the result follows by the monotone convergence theorem, letting first $\delta \to 0$ and then $j\to +\infty$.
\end{proof}

\section{Boundary  $W^{1, 2+\gamma}$ estimate}
\label{sect:bdry W12g}

In this section we prove a uniform $W^{1, 2+\gamma}$ bound near the boundary in terms only of the $L^1$ norm of the solution.
As in the interior case (see Section \ref{sect:interior}), this is done by first controlling $\|\nabla u\|_{L^{2+\gamma}}$ with  $\|\nabla u\|_{L^{2}}$, and then $\|\nabla u\|_{L^{2}}$ with $\|u\|_{L^{1}}$.

We begin by introducing the notion of a small deformation of a half-ball.
It will be useful in several proofs, particularly in that of Lemma \ref{lem:x Du def}.
Given $\rho>0$, we denote by $B_\rho^+$ the upper half-ball in the $e_n$ direction, namely
$$
B_\rho^+:=B_\rho\cap\{x_n>0\}.
$$

\begin{definition}
\label{def:deform}
Given $\vartheta\geq0$, we say that $\Omega\subset \R^n$ is a $\vartheta$-deformation of $B_2^+$ if 
\[
\Omega = \Phi(B_2\cap \{x_n>0\})  
\]   
for some $\Phi \in C^3(B_2; \R^n)$ satisfying $\Phi(0)=0$, $D\Phi(0) = {\rm Id}$, and \
\[
\|D^2\Phi\|_{L^\infty (B_2)} + \|D^3\Phi\|_{L^\infty (B_2)}   \le \vartheta.
\]
Here, the norms of $D^2\Phi$ and $D^3\Phi$ are computed with respect to the operator norm.
\end{definition}

Note that, given a bounded $C^3$ domain, 
one can cover its boundary with finitely many small balls so that, after rescaling these balls, the boundary of the domain is given by a  finite union of  
$\vartheta$-deformations of $B_2^+$ (up to isometries) {with $\vartheta$ arbitrarily small.}

\begin{proposition}\label{prop:W12+epbdry}
Let $\Omega\subset \R^n$  be a $\vartheta$-deformation of $B_2^+$  for $\vartheta \in[0,\frac{1}{100}]$. Let $u\in C^2(\overline \Omega\cap B_1)$ be a nonnegative stable solution of $-\Delta u=f(u)$ in $\Omega\cap B_1$, with $u=0$ on $\partial \Omega\cap B_1$.
Assume that $f$ is locally Lipschitz, nonnegative, and nondecreasing. 
Then 
\[
 \|\nabla u\|_{L^{2+\gamma}(\Omega\cap B_{3/4})}  \le C \|\nabla u\|_{L^{2}(\Omega \cap B_{1})}, 
\]
where $\gamma>0$ and $C$ are dimensional constants.
\end{proposition}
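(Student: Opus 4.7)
The strategy mirrors the three-step structure of Proposition~\ref{prop:W12+ep}: (i) bound $\int_{\Omega\cap B_1}|{\rm div}(|\nabla u|\nabla u)|\eta^2\,dx$ via a stability-type estimate; (ii) convert this into a level-set energy bound $\int_{\{u=t\}\cap\Omega\cap B_{3/4}}|\nabla u|^2\,d\HH^{n-1}\le C$ for a.e.\ $t>0$ via Gauss--Green and coarea; (iii) combine with Sobolev--Poincar\'e and H\"older to upgrade $L^2$ to $L^{2+\gamma}$ integrability of $\nabla u$.

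The main new issue is step~(i), because the test function $\xi=|\nabla u|\eta$ used in Lemma~\ref{conseqestab} is no longer admissible in the stability inequality of $u$: by the Hopf lemma $|\nabla u|>0$ on $\partial\Omega\cap B_1$, so $|\nabla u|\eta$ does not vanish on $\partial\Omega$. One way to restore admissibility is to test with $\xi=\chi_\delta(u)|\nabla u|\eta$, where $\chi_\delta\colon[0,\infty)\to[0,1]$ is smooth, vanishes on $[0,\delta]$, and equals $1$ on $[2\delta,\infty)$; since $\chi_\delta(u)$ vanishes in a neighborhood of $\partial\Omega$, this $\xi$ is admissible. Reproducing the algebra of Lemma~\ref{conseqestab} yields
\[
\int_{\Omega\cap B_1}\chi_\delta(u)^2\,\J^2\eta^2\,dx
\le C\int_{\Omega\cap B_1}\chi_\delta(u)^2|\nabla u|^2|\nabla\eta|^2\,dx
+C\int_{\Omega\cap B_1}\chi_\delta'(u)^2|\nabla u|^4\eta^2\,dx,
\]
and the concentration error on the right must be handled as $\delta\downarrow 0$. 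A cleaner alternative exploits the small $C^3$-deformation assumption: after flattening $\partial\Omega$ via $\Phi^{-1}$, the transformed tangential derivatives $u_j$ for $j=1,\dots,n-1$ vanish on $\{x_n=0\}$ (because $u$ vanishes there), and are therefore admissible in the (transformed) stability inequality. These tangential test functions yield a bound on the tangential part of $\J^2$; the normal second derivative is then recovered from the equation $\Delta u=-f(u)$, the coefficients of the flattened operator differing from those of the Laplacian by~$O(\vartheta)$, which can be absorbed for $\vartheta$ small.

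Steps~(ii) and~(iii) go through essentially as in the interior proof. For step~(ii): because $u=0$ on $\partial\Omega$ and $t>0$, the set $\{u>t\}\cap B_1$ is compactly contained in $\Omega$, so the Gauss--Green computation of Step~2 of Proposition~\ref{prop:W12+ep} picks up no boundary contribution from $\partial\Omega$; only the desired level-set integral on $\{u=t\}$ and terms on $\Omega\cap\partial B_1$ (absorbed by $\eta$) arise. For step~(iii): since $u$ vanishes on $\partial\Omega\cap B_1$, a portion of boundary of positive $(n-1)$-measure, the Friedrichs--Sobolev inequality gives $\|u\|_{L^p(\Omega\cap B_1)}\le C\|\nabla u\|_{L^2(\Omega\cap B_1)}$ for some $p>2$ directly, with no need to subtract a mean; then coarea plus H\"older exactly as in Step~3 of Proposition~\ref{prop:W12+ep} deliver $\|\nabla u\|_{L^{2+\gamma}(\Omega\cap B_{3/4})}\le C\|\nabla u\|_{L^2(\Omega\cap B_1)}$.

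The hard part will be step~(i): extracting a universal (that is, independent of $f$) bound for $\int\J^2\eta^2$ up to the boundary. Both the truncation and the flattening approaches require nontrivial work near $\partial\Omega$; the smallness of $\vartheta$ is expected to play a key role, reducing to a nearly flat geometry in which tangential stability test functions give control of the tangential second derivatives and the equation recovers the remaining normal direction, up to $O(\vartheta)$ perturbative corrections.
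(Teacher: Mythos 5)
Your Steps (ii) and (iii) are fine and match the paper, and you have correctly located the difficulty in Step (i). But Step (i) is not just "hard"; both mechanisms you sketch for it break down, and the actual key ingredient is missing. First, the truncation $\xi=\chi_\delta(u)|\nabla u|\eta$: by the Hopf lemma $|\nabla u|\ge c>0$ in a neighborhood of $\partial\Omega\cap B_{7/8}$, so by the coarea formula the error term satisfies
\[
\int_{\Omega\cap B_1}\chi_\delta'(u)^2|\nabla u|^4\eta^2\,dx\;\sim\;\delta^{-2}\int_\delta^{2\delta}\Bigl(\int_{\{u=t\}}|\nabla u|^{3}\eta^2\,d\HH^{n-1}\Bigr)dt\;\gtrsim\;\delta^{-1},
\]
which diverges as $\delta\downarrow0$ rather than being handled; this is unavoidable for any cutoff forcing $\xi$ to vanish on a set where $|\nabla u|$ is bounded below. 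Second, the tangential route: testing stability with $\xi=u_j\eta$ ($j<n$) and comparing with the identity obtained by multiplying $-\Delta u_j=f'_-(u)u_j$ by $u_j\eta^2$ gives only the trivial inequality $0\le\int u_j^2|\nabla\eta|^2$; to produce a Sternberg--Zumbrun curvature term one must sum the integration-by-parts identity over \emph{all} $i=1,\dots,n$, and the $i=n$ term generates a boundary integral involving $u_\nu$ and second derivatives of $u$ on $\partial\Omega$ that your sketch does not address. Recovering $u_{nn}$ "from the equation" also injects $f(u)$, which is not controlled in $L^2$ independently of $f$.

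What actually closes Step (i) in the paper is the test function $\xi=\bigl(\phi_r(\nabla u)-\NN\cdot\nabla u\bigr)\eta$, where $\NN$ is a $C^2$ extension of the inner unit normal and $\phi_r$ regularizes the absolute value: since $\nabla u$ is parallel to $\nu$ on $\partial\Omega$, this $\xi$ vanishes there with no cutoff and no concentration error, and the algebra of Lemma \ref{conseqestab} survives up to correction terms. Those corrections produce boundary integrals which, after the cancellation $|\nabla u|_\nu\,\NN\cdot\nabla u-|\nabla u|(\NN\cdot\nabla u)_\nu=\sum_{i,j}\NN^i_ju_iu_j$ on $\partial\Omega$, reduce to $\int_{\partial\Omega\cap B_{7/8}}|u_\nu|^2\,d\HH^{n-1}$. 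Controlling this by $\int_{\Omega\cap B_1}|\nabla u|^2\,dx$ requires a separate Pohozaev-type identity (Lemma \ref{lem:auxbdry2}), and it is exactly there that the hypothesis that $f$ is nondecreasing enters, via $0\le F(t)\le tf(t)$. Your proposal never uses this hypothesis and provides no substitute for the boundary estimate on $u_\nu$, so as written the argument cannot be completed.
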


The proof will make us of  the following lemma, which is based on a Pohozaev-type identity.

\begin{lemma}\label{lem:auxbdry2}
Under the assumptions of Proposition \ref{prop:W12+epbdry} we have 
\begin{equation} 
\| u_\nu\|_{L^2(\partial\Omega \cap B_{7/8})}  \leq C\|\nabla u\|_{L^2(\Omega\cap B_1)},
\end{equation}
where $u_\nu$ is the normal derivative of $u$ at $\partial\Omega$ and $C$ is a dimensional constant.
\end{lemma}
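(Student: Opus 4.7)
The strategy is a localized Pohozaev identity on $D := \Omega\cap B_1$, combined with the monotonicity of $f$ to handle the unknown nonlinearity. Let $\eta\in C^\infty_c(B_1)$ be a nonnegative cutoff with $\eta\equiv 1$ on $B_{7/8}$, and consider the vector field $X := -\eta\,e_n$. I would compute $\int_D (X\cdot\nabla u)(-\Delta u)\,dx$ in two ways: first via integration by parts, using $u=0$ on $\partial\Omega\cap B_1$ (so $\nabla u = u_\nu\nu$ and $X\cdot\nabla u = (X\cdot\nu)u_\nu$ there) to produce a boundary contribution $-\tfrac12\int_{\partial\Omega\cap B_1}(X\cdot\nu)u_\nu^2$ together with interior terms involving $DX$ and $\mathrm{div}\,X$; second via $(X\cdot\nabla u)f(u) = X\cdot\nabla F(u)$ and integration by parts, which yields $-\int_D(\mathrm{div}\,X)F(u)\,dx$ (the boundary terms vanish because $F(u)=F(0)=0$ on $\partial\Omega\cap B_1$, while $X=0$ on $\Omega\cap\partial B_1$). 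Equating these and substituting $X=-\eta e_n$ produces the identity
\[
\int_{\partial\Omega\cap B_1}\eta(-e_n\cdot\nu)u_\nu^2\,d\sigma = -2\int_D u_n(\nabla\eta\cdot\nabla u)\,dx + \int_D \eta_n|\nabla u|^2\,dx - 2\int_D \eta_n F(u)\,dx.
\]

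The next step is to extract the bound from this identity. Since $\Omega$ is a $\vartheta$-deformation of $B_2^+$ with $\vartheta\le 1/100$, the outward unit normal satisfies $\nu = -e_n + O(\vartheta)$ on $\partial\Omega\cap B_1$, so $-e_n\cdot\nu\ge 1/2$; combined with $\eta\equiv 1$ on $B_{7/8}$ and the nonnegativity of $\eta$, the left-hand side dominates $\tfrac12\int_{\partial\Omega\cap B_{7/8}}u_\nu^2$. The first two terms on the right-hand side are immediately controlled by $C\|\nabla u\|_{L^2(\Omega\cap B_1)}^2$ via Cauchy-Schwarz and $\|\nabla\eta\|_\infty\le C$.

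The delicate piece is $\int_D \eta_n F(u)\,dx$, which at first glance seems uncontrollable since $f$ is arbitrary. The trick is the elementary inequality $F(u)=\int_0^u f(s)\,ds \le u\,f(u)$, valid because $u\ge 0$ and $f\ge 0$ is nondecreasing. Choosing an auxiliary cutoff $\tilde\eta\in C^\infty_c(B_1)$ with $\tilde\eta\equiv 1$ on $\mathrm{supp}(\nabla\eta)$ and testing the equation against $u\tilde\eta$ gives
\[
\int_D \tilde\eta\,u f(u)\,dx = \int_D \tilde\eta|\nabla u|^2\,dx + \int_D u\,\nabla\tilde\eta\cdot\nabla u\,dx \le C\|\nabla u\|_{L^2(\Omega\cap B_1)}^2,
\]
where I use Cauchy-Schwarz and the Poincaré inequality $\|u\|_{L^2(D)}\le C\|\nabla u\|_{L^2(D)}$, which holds because $u$ vanishes on $\partial\Omega\cap B_1$, a $C^3$-small perturbation of the flat disk $\{x_n=0\}\cap B_1$, with Poincaré constant uniform in $\vartheta$ for $\vartheta$ small. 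Consequently $|\int_D \eta_n F(u)\,dx| \le C\|\nabla u\|_{L^2(\Omega\cap B_1)}^2$, and the lemma follows. The main obstacle throughout is precisely this $F(u)$ term; the strategy succeeds because the monotonicity of $f$ converts it into a quantity controlled by the standard $W^{1,2}$ energy identity, entirely bypassing any direct bound on the nonlinearity.
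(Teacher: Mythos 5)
Your proof is correct and follows essentially the same route as the paper: a localized Pohozaev--Rellich identity with a vector field transversal to $\partial\Omega$ (the paper uses $\mathbf{X}=x+e_n$ with a separate cutoff $\eta^2$, you fold the cutoff into $X=-\eta\,e_n$), the sign condition $-e_n\cdot\nu\geq\tfrac12$ from the $\vartheta$-smallness, and the bound $0\le F(u)\le u f(u)$ from monotonicity combined with testing the equation against $u$ times a cutoff and Poincar\'e. No gaps; the differences from the paper's argument are cosmetic.
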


\begin{proof}
Take a cut-off function  $\eta\in C^2_c(B_1)$ such that $\eta=1$ in $B_{7/8}$,
and consider the vector-field $\mathbf{X}(x):= x+\boldsymbol e_n$. Multiplying  the identity 
\[ 
{\rm div}\big( | \nabla u|^2\mathbf{X}- 2(\mathbf{X}\cdot \nabla u) \nabla u \big) =  (n-2) |\nabla u|^2  - 2(\mathbf{X}\cdot \nabla u) \Delta u
\]
by $\eta^2$
and integrating in $\Omega\cap B_1$,
since $u=0$ on $\partial \Omega$ and $u\geq0$ in $\Omega\cap B_1$ (and hence the exterior unit normal $\nu$ is given by $-\frac{\nabla u}{|\nabla u|}$), we obtain
\[\begin{split}
-\int_{\partial\Omega \cap B_{1}}  (\mathbf{X}\cdot \nu) |\nabla u|^2 \eta^2\,d\mathcal H^{n-1}  - &\int_{\Omega \cap B_{1}} \big( | \nabla u|^2\mathbf{X}- 2(\mathbf{X}\cdot \nabla u) \nabla u \big)\cdot \nabla\eta^2 \,dx= 
\\
&\qquad=\int_{\Omega \cap B_{1}} \big((n-2) |\nabla u|^2  - 2(\mathbf{X}\cdot \nabla u) \Delta u\big) \eta^2 \,dx.
\end{split}\]
Note that, since $\Omega$ is a small deformation of $B_2^+$, we have  $-\mathbf{X}\cdot \nu\ge \frac{1}{2}$ on $\partial\Omega\cap B_1$.
Hence, since $F(t) : = \int_0^t f(s)ds$ satisfies $\mathbf{X}\cdot \nabla (F(u))= f(u)\mathbf{X}\cdot \nabla u  = - \Delta u\,  \mathbf{X}\cdot \nabla u$, we obtain
\[\begin{split}
\frac{1}{2}\int_{\partial\Omega \cap B_{1}}   |\nabla u|^2 \eta^2 d\mathcal H^{n-1}  &
\le C\int_{\Omega \cap B_{1}} |\nabla u|^2\,dx + 2\int_{\Omega \cap B_{1}}  \mathbf{X}\cdot \nabla  (F(u)) \eta^2\,dx
\\
&=  C\int_{\Omega \cap B_{1}} |\nabla u|^2\eta^2\,dx - 2 \int_{\Omega \cap B_{1}}  F(u) \,{\rm div}(\eta^2\mathbf{X})\,dx.
\end{split}
\]
We now observe that, since $f$ is nondecreasing, $0\leq F(t) \leq f(t)t$ for all $t \geq 0$. Hence, noticing that the function $g:=|{\rm div}(\eta^2\mathbf{X})|$ is Lipschitz,
we can bound
\begin{align*}
- \int_{\Omega \cap B_{1}}  F(u) \,{\rm div}(\eta^2\mathbf{X})\,dx &\le \int_{\Omega\cap B_1} u\,f(u)\,g\,dx
 = -\int_{\Omega\cap B_1} u\,\Delta u\,g\,dx\\
 & = \int_{\Omega\cap B_1}\big(|\nabla u|^2g + u\,\nabla u\cdot \nabla g\big)\,dx \leq C\int_{\Omega \cap B_1} \big(u^2+|\nabla u|^2\big)\,dx,
\end{align*}
and we conclude using Poincar\'e inequality (since $u$ vanishes on $\partial\Omega \cap B_1$).
\end{proof}

We next give the:

\begin{proof}[Proof of Proposition \ref{prop:W12+epbdry}]
The key idea is to use a variant of 
\[\xi=\bigl(|\nabla u|-u_n\bigr)\eta\]
as test function in the stability inequality (note that this function vanishes on the boundary if $\partial\Omega \cap B_1\subset\{x_n=0\}$ is flat).\\

\noindent
{\bf Step 1:} {\it We prove that, whenever $B_{\rho}(z)\subset B_{7/8}$,
\begin{equation}\label{hwioghwoih} 
 \int_{\Omega\cap B_{\rho/2}(z)} \rho^4\J^2\,dx \le C\int_{\Omega\cap B_{\rho}(z)}  \left(\rho^3 |D^2 u| |\nabla u| + \rho^2|\nabla u|^2\right)\,dx,
\end{equation}
where $\J$ is as in Lemma \ref{conseqestab}.}

By scaling and a covering argument, it is enough to prove the result for $z=0$ and $\rho =1$.\footnote{{For this, note that when  $B_{\rho}(z)\subset \Omega$ then \eqref{hwioghwoih}  follows from Lemma \ref{conseqestab}. Note also that if $z\in \partial\Omega\cap B_{7/8}$ then, within a small ball centered at $z$, $\Omega$ is (after a translation, rotation, and dilation) a $\vartheta$-deformation of $B_2^+$.}}
Observe that, thanks to Lemma \ref{lem:eq Du}(iii), $\nabla u\in (W^{2,p}\cap C^1)(\overline\Omega \cap {B_{7/8}})$ 
for all $p\in(1,\infty)$.

Since $\Omega$ is a $\vartheta$-deformation of $B_2^+$ with $\vartheta\leq 1/100$, $\Phi$ is a diffeomorphism. 
Let 
\[\mathbf{Y}:= \nabla(\boldsymbol e_n\cdot\Phi^{-1})= \nabla((\Phi)^{-1})^n \]
be the gradient of the pushforward of the $n$-coordinate  $x_n: B_1^+ \to \R$ through $\Phi$.
Note that $\mathbf{Y}$ is orthogonal to $\partial \Omega$.
We define $\NN=\mathbf{Y}/|\mathbf{Y}|$, and note that $\NN$ belongs to $C^2(\overline \Omega)$ and that $\NN=-\nu$ on $\partial \Omega$.

Consider the following convex $C^{1,1}$ regularization of the absolute value:  for $r>0$ small, we set
\begin{equation}\label{phi-r}
\phi_r(z) :=  |z|1_{\{|z|>r\}} + \Big(\frac{r}{2} +\frac{|z|^2}{2r}\Big) 1_{\{|z|<r\}}.
\end{equation}
Then $\phi_r(\nabla u)\in (W^{2,p}\cap C^1)(\overline\Omega \cap{B_{7/8}})$ for all $p<\infty$.
Moreover, since $u$ is nonnegative and superharmonic,
unless $u\equiv 0$ (in which case there is nothing to prove) then it follows by the Hopf lemma that
$|\nabla u|\ge c>0$ on $\partial{\Omega}\cap B_{7/8}$, for some constant $c$. 
Hence, since $\nabla u$ is $C^1$ up to the boundary, for $r>0$ small enough we have
\begin{equation}\label{inaneighborhood}
\phi_r(\nabla u)  = |\nabla u| \quad \mbox{in a neighborhood of }\partial \Omega\mbox{ inside }B_{7/8}.
\end{equation}
After choosing $r>0$ small enough such that \eqref{inaneighborhood} holds, we set 
\[\textbf{c} := \phi_r(\nabla u) - \NN\cdot \nabla u,\]
 and we take $\eta\in C^2_c (B_{7/8})$ with $\eta=1$ in $B_{1/2}$. 
Note that $\textbf{c}\equiv 0$ on $\partial\Omega\cap B_{7/8}$, and $\textbf{c}\in (W^{2,p}\cap C^1)(\overline\Omega\cap B_{7/8})$.  
Then, {since $\textbf{c}$ vanishes on $\partial\Omega\cap B_{7/8}$, thanks to an approximation argument we are allowed to take $\xi = \textbf{c}\eta$ as a test function in the stability inequality \eqref{stabilityLip}. Thus, with this choice,} integration by parts yields
\begin{equation}\label{ajgpen1}
\int_{\Omega\cap B_1} \big(\Delta \textbf{c} +f_-'(u)\textbf{c}\big)\,\textbf{c}\, \eta^2 \,dx\le \int_{\Omega\cap B_1} \textbf{c}^2 |\nabla \eta|^2 \,dx.
\end{equation}

Note now that
\begin{equation}\label{ajgpen2}
\begin{split}
\bigl(\Delta \textbf{c} + f_-'(u)\textbf{c}\bigr) \,\textbf{c} &=  \bigl({\Delta[\phi_r(\nabla u)]} +f_-'(u)\phi_r(\nabla u)\bigr) \phi_r(\nabla u)
\\
& \hspace{25mm}  - \big(\Delta (\NN\cdot \nabla u) +f_-'(u) \NN\cdot \nabla u \big) \big( \phi_r(\nabla u)- \NN\cdot \nabla u\big)
\\
& \hspace{25mm}  - \big( {\Delta[\phi_r(\nabla u)]} +f_-'(u) \phi_r(\nabla u)\big) \NN\cdot \nabla u.
\end{split}
\end{equation}

Since $\Delta \nabla  u  = -f_-'(u) \nabla u$ (see Lemma \ref{lem:eq Du}(ii)), we have
\begin{eqnarray}\label{may19}
\bigl({\Delta[\phi_r(\nabla u)]} +f_-'(u)\phi_r(\nabla u)\bigr) \phi_r(\nabla u) 
 &=  f'_-(u)\phi_r(\nabla u)\Bigl(\phi_r(\nabla u)-\sum_j u_j(\partial_j\phi_r)(\nabla u)\Bigr) \\
 & +\,\phi_r(\nabla u)\sum_{i,j,k} (\partial_{jk}^2\phi_r)(\nabla u)u_{ij}u_{ik}.\label{may19b}
\end{eqnarray}
Note that, inside the set $\{|\nabla u|\leq r\}$, the term \eqref{may19b} is nonnegative since $\phi_r$ is convex, while the term \eqref{may19} is equal to 
\[f'_-(u)\phi_r(\nabla u)\Bigl(\frac{r}{2}-\frac{|\nabla u|^2}{2r}\Bigr)\]
and therefore it is also nonnegative (all three factors are nonnegative).
On the other hand, inside the set $\{|\nabla u|>r\}$, the term \eqref{may19} vanishes, while the term \eqref{may19b} equals $\J^2$.
Therefore, we conclude that
\begin{equation}\label{ajgpen3}
\bigl({\Delta[\phi_r(\nabla u)]} +f_-'(u)\phi_r(\nabla u)\bigr) \phi_r(\nabla u)  \ge  \J^2 \, 1_{\{|\nabla u|>r\}},
\end{equation}
where $\J^2$ is as in \eqref{defAAA}.

{Coming back to \eqref{ajgpen2}, we note that
\begin{equation}\label{agniowh}
\begin{split}
\Delta (\NN\cdot \nabla u) +f'_-(u)\NN\cdot \nabla u =   \sum_i \Delta \NN^i u_i   + 2\sum_{ij}  \NN^i_j u_{ij},
\end{split}
\end{equation}
so it follows from the bound $|\phi_r(\nabla u)|\leq |\nabla u|+r$ that 
 \begin{multline}
 \label{eq:du r}
\left| \int_{\Omega \cap B_1}  \big(\Delta (\NN\cdot \nabla u) +f_-'(u) \NN\cdot \nabla u \big) \big( \phi_r(\nabla u)- \NN\cdot \nabla u\big) \eta^2\,dx\right|\leq \\
\le   C\int_{\Omega\cap B_1} (|\nabla u|+r)\left( |D^2u|  + |\nabla u|\right)\,dx.
 \end{multline}}

Also, since  $\eta\in C^2_c (B_{7/8})$, {integrating by parts} and recalling  \eqref{inaneighborhood} we have
\begin{equation}\label{ahoighwioh}
\begin{split}
\int_{\Omega\cap B_1}  {\Delta[\phi_r(\nabla u)]}\,  \NN\cdot \nabla u \,\eta^2\,dx &=  \int_{\Omega\cap B_1}    \phi_r(\nabla u) \,  \Delta (\NN\cdot \nabla u)  \,\eta^2\,dx \ + 
\\
&\hspace{-15mm} +\int_{\Omega\cap B_1}  \left( 2 \phi_r(\nabla u) \, \nabla (\NN\cdot \nabla u)\cdot  \nabla (\eta^2) + |\nabla u| \, \NN\cdot \nabla u \,\Delta (\eta^2)\right)\,dx
\\
&\hspace{-15mm} +\int_{\partial \Omega\cap B_1}  \left( |\nabla u|_\nu  \,\NN\cdot \nabla u\, \eta^2 - |\nabla u|  (\NN\cdot \nabla u\, \eta^2)_\nu\right) d\mathcal H^{n-1}.
\end{split}
\end{equation}
Since $\NN =  \frac{\nabla u}{|\nabla u|} = -\nu$ on the boundary, it follows that on $\partial\Omega\cap B_1$ it holds 
 \[|\nabla u|_\nu  \NN\cdot \nabla u  =  - \frac{\sum_{ij }u_{ij}u_j u_j}{|\nabla u|}
 \qquad \text{and}\qquad 
 |\nabla u|  (\NN\cdot \nabla u)_\nu  = -\sum_{i,j}\NN^i_j  u_i u_j  - \frac{\sum_{ij }u_{ij}u_j u_j}{|\nabla u|},\] 
and therefore, thanks to Lemma \ref{lem:auxbdry2},
\begin{equation}\label{may19c}\begin{split}
 \bigg|  \int_{\partial \Omega\cap B_1}  \left(  |\nabla u|_\nu  \,\NN\cdot \nabla u\, \eta^2 - |\nabla u|  (\NN\cdot \nabla u\, \eta^2)_\nu\right) \,d\mathcal H^{n-1}  \bigg| 
 & \le C\int_{\partial \Omega\cap B_{7/8}}  |u_\nu|^2\,d\mathcal H^{n-1} \\
 & \le C\int_{\Omega\cap B_1}|\nabla u|^2dx.
\end{split}\end{equation}

Thus, combining \eqref{ahoighwioh} and \eqref{agniowh}, and then using \eqref{may19c}, we conclude that
\[
\left| \int_{\Omega\cap B_1} \big( {\Delta[\phi_r(\nabla u)]} +f'_-(u) \phi_r(\nabla u)\big) \NN\cdot \nabla u\,  \eta^2\,dx\right| \le   C\int_{\Omega\cap B_1} (|\nabla u|+r)\left( |D^2u|  + |\nabla u|\right)\,dx.
\]
Combining this bound with \eqref{ajgpen1},  \eqref{ajgpen2}, \eqref{ajgpen3}, and \eqref{eq:du r}, we finally obtain 
\[
 \int_{\Omega\cap B_1} \J^2\eta^2 1_{\{|\nabla u|>r\}}\,dx \le 
 C\int_{\Omega\cap B_{1}}  (|\nabla u|+r)^2+(|\nabla u|+r)\left( |D^2u|  + |\nabla u|\right)\,dx.
\]
Recalling that $\eta=1$ in $B_{1/2}$, letting $r\downarrow 0$ this proves \eqref{hwioghwoih} for $z=0$ and $\rho=1$, as desired.
\\

\noindent
{\bf Step 2: } {\it We prove that} 
\begin{equation}\label{wgiowhoiwh}
\| \J \|^2_{L^2(\Omega\cap B_{{7/8}})}  \le  C  \|\nabla u\|_{L^2(\Omega\cap B_1)}^2.
\end{equation}

It suffices to prove that, for every  $B_{\rho}(z)\subset B_{7/8}$ and $\ep>0$, we have
\begin{equation}\label{angoiwnown}
\rho^2\| \J \|^2_{L^2(\Omega\cap B_{{2\rho/5}}(z))}  \le  \ep \rho^2\| \J \|^2_{L^2(\Omega\cap B_{\rho}(z))} +
\frac{C}{\ep}  \|\nabla u\|_{L^2(\Omega\cap B_\rho(z))}^2 
\end{equation}
where $\J$ is  as in \eqref{defAAA}. 
Indeed, it follows from Lemma \ref{lem_abstract} applied with $\sigma(B):=\| \J \|^2_{L^2(\Omega\cap B)}$ that
\eqref{angoiwnown} leads to \eqref{wgiowhoiwh} with $\Omega\cap B_{{7/8}}$ replaced by $\Omega\cap B_{7/16}$.
A covering and scaling argument then gives \eqref{wgiowhoiwh} with $\Omega\cap B_{{7/8}}$ in the left hand side.

To prove \eqref{angoiwnown}, we argue as at the beginning of Step 1 to note that we may assume $z=0$ and $\rho=1$.

We observe that, for any given  $\eta \in C^2_c(B_{7/8})$ with $\eta\equiv1$ in $B_{{4/5}}$, 
it follows from \eqref{hwuighwiu} and 
\eqref{hwuighwiu2}
that
\begin{equation}\label{ahigohwiowb}
-\int_{\Omega\cap B_1} {\rm div}\big(  |\nabla u| \nabla u\big) \eta^2\,dx  \ge  \int_{\Omega\cap B_1}  \big( -2 \Delta u   - C\J\big)|\nabla u|\,\eta^2\,dx.
\end{equation}
Hence, since $|D^2u|\leq |\Delta u|+C\J$ and $\Delta u \leq 0$, using \eqref{ahigohwiowb} we get
\begin{equation}\label{ahigohwiowb2}
\int_{\Omega\cap B_1}|D^2u|\,|\nabla u|\,\eta^2\,dx \leq 
\biggl|\frac12\int_{\Omega\cap B_1} {\rm div}\big(  |\nabla u| \nabla u\big) \eta^2\,dx  \biggr|+C  \int_{\Omega\cap B_1} \J\,|\nabla u|\,\eta^2\,dx.
\end{equation}
On the other hand, 
using Lemma \ref{lem:auxbdry2} we obtain
\begin{equation}\label{ahigohwiowb3}
\begin{split}
\biggl|\int_{\Omega\cap B_1} {\rm div}\big(  |\nabla u| \nabla u\big) \eta^2\,dx\biggr|  &=
\biggl|-\int_{\partial \Omega\cap B_1}   |u_\nu|^2 \eta^2 \,d\mathcal H^{n-1}   - \int_{\Omega}  |\nabla u| \nabla u \cdot \nabla (\eta^2)\,dx\biggr|\\
&\leq C \int_{\Omega\cap B_1}   |\nabla u|^2\,dx.
\end{split}
\end{equation}
Thus, combining \eqref{ahigohwiowb2}
and \eqref{ahigohwiowb3},
we get
\begin{equation}
\label{eq:Du D2u}
\int_{\Omega\cap B_1} |D^2u| \, |\nabla u|\,\eta^2\,dx \le C\int_{\Omega\cap B_1} \J \,|\nabla u|\,\eta^2\,dx + C\int_{\Omega\cap B_1} |\nabla u|^2\,dx.
\end{equation}
{Recalling that $\eta\equiv1$ in $B_{{4/5}}$,
\eqref{eq:Du D2u} and \eqref{hwioghwoih} yield, for every $\ep\in (0,1)$,}
\[\begin{split}
 \int_{\Omega\cap B_{{2/5}}} \J^2\,dx 
&\le C\|\nabla u\|_{L^2(\Omega\cap B_{4/5})}^2 + C \int_{\Omega\cap B_{{4/5}}} |D^2u| \, |\nabla u|\,dx
\\
&\le C\|\nabla u\|_{L^2(\Omega\cap B_1)}^2 + C \int_{\Omega\cap B_1} \J\, |\nabla u|\,dx 
\le \frac{C}{\ep}\|\nabla u\|_{L^2(\Omega\cap B_1)}^2 + \ep  \int_{\Omega\cap B_1} \J^2\,dx,
\end{split}\]
which proves \eqref{angoiwnown}.
\\

\noindent
{\bf Step 3: } {\it We show  that} 
\[
\int_{\Omega\cap B_{{4/5}}} \big| {\rm div}(|\nabla u|\, \nabla u) \big| \,dx  \le C\int_{\Omega\cap B_{1}} |\nabla u|^2\,dx.
\]

{As in the previous step, we take  $\eta \in C^2_c(B_{7/8})$ with $\eta\equiv1$ in $B_{{4/5}}$.}
Then it suffices to combine \eqref{hwuighwiu}, \eqref{hwuighwiu2}, \eqref{eq:Du D2u},
and \eqref{wgiowhoiwh}, to get
\begin{multline*}
\int_{\Omega\cap B_{{4/5}}} \big| {\rm div}(|\nabla u|\, \nabla u) \big|  \,dx \le \int_{\Omega\cap B_{{4/5}}} -2|\nabla u|\,\Delta u\,dx  + C \int_{\Omega\cap B_{{4/5}}} |\nabla u| \,\J\,dx \\
\leq C\int_{\Omega\cap B_{{7/8}}} \J^2\,dx+C\int_{\Omega\cap B_{1}} |\nabla u|^2\,dx+ C\biggl(\int_{\Omega\cap B_{{4/5}}} |\nabla u|^2\,dx\biggr)^{1/2} \biggl(\int_{\Omega\cap B_{{4/5}}} \J^2\,dx\biggr)^{1/2}\\
 \le C\int_{\Omega\cap B_{1}} |\nabla u|^2\,dx,
\end{multline*}
as desired.\\

\noindent
{\bf Step 4: } {\it Conclusion.}

Here it is convenient to assume, after multiplying $u$ by a constant, that $\|\nabla u\|_{L^2(\Omega\cap B_1)}=1$.

Thanks to Step 3, we can repeat the same argument as the one used in Step 2  in the proof of  Proposition \ref{prop:W12+ep} to deduce that, for a.e. $t>0$, 
\begin{equation}\label{ahjsioghoibg}
\int_{\Omega\cap \{u=t\}\cap B_{3/4}} |\nabla u|^2 \,d\HH^{n-1}  \le  C\int_{\Omega\cap B_1} |\nabla u|^2\,dx=C.
\end{equation}
Also, since $u$ vanishes on $\partial\Omega\cap B_1$, setting $h(t)=\max\{1,t\}$, by the Sobolev embedding we deduce that
\begin{equation}\label{ashgowobb}
\int_{\R^+} dt \int_{\Omega\cap \{u=t\}\cap B_{1}\cap \{|\nabla u|\neq0\}}  h(t)^p \,|\nabla u|^{-1} \, d\HH^{n-1} \leq |\Omega\cap B_1\cap \{u<1\}|+ \int_{\Omega\cap B_1} u^p  \, dx\leq C,
\end{equation}
for some $p>2$.
Hence, choosing dimensional constants  $q>1$ and $\theta\in (0,1/3)$ such that $p/q = (1-\theta)/\theta$, we can write
\begin{multline}\label{nwgoinwon}
\int_{\Omega\cap B_{3/4}} |\nabla u|^{3-3\theta}\,dx  =\int_{\R^+} dt\int_{\Omega\cap \{u=t\}\cap B_{3/4}\cap \{|\nabla u|\neq0\}}  h(t)^{p\theta -q(1-\theta)}  |\nabla u|^{-\theta + 2(1-\theta)} d\HH^{n-1} 
\\
\le   \left(\int_{\R^+} dt\int_{\Omega\cap \{u=t\}\cap B_1\cap \{|\nabla u|\neq0\}}   h(t)^{p}  |\nabla u|^{-1}d\HH^{n-1} \right)^\theta \times
\\\qquad\qquad \times \bigg(\int_{\R^+} dt\int_{\Omega\cap \{u=t\}\cap B_{3/4}\cap \{|\nabla u|\neq0\}}   h(t)^{-q} |\nabla u|^2 d\HH^{n-1}  \bigg)^{1-\theta},
\end{multline}
and by \eqref{ashgowobb} and the very same argument as the one used at the end of Step 3  in the Proof of  Proposition \ref{prop:W12+ep}  (now using \eqref{ahjsioghoibg})
 we obtain  
\[
\int_{\Omega\cap B_{3/4}} |\nabla u|^{3-3\theta}  \,dx \le C,
\]
which concludes the proof.
\end{proof}

\begin{remark}\label{asngosnaokn}
Note that, in Step 4 of the previous proof, one may also take any exponent $p>2$, and then $\theta =1/3$ and $q=p/2>1$. 
With these choices, if we normalize $u$ so that $\|u\|_{L^p(\Omega\cap B_1)}=1$ (instead of the normalization $\|\nabla u\|_{L^2(\Omega\cap B_1)}=1$ made in Step 4 of the previous proof), setting $h(t):=\max\{1,t\}$ 
it follows from  \eqref{nwgoinwon}, \eqref{ashgowobb},
and the inequality in \eqref{ahjsioghoibg}, that
\begin{equation}\label{eq:2 p}
\int_{\Omega\cap B_{3/4}} |\nabla u|^{2}\,dx \leq C\Big(\int_{\Omega\cap B_{1}} |\nabla u|^{2}\,dx\Big)^{2/3}\qquad \text{{whenever $\|u\|_{L^p(\Omega\cap B_1)}=1$}},
\end{equation}
where we used that $\int_{\R^+} h(t)^{-q}\,dt\leq C$.

In the general case, applying this estimate to $u/\|u\|_{L^p(\Omega\cap B_1)}$, we deduce that
\begin{equation}\label{eq:2 p 2}
\int_{\Omega\cap B_{3/4}} |\nabla u|^{2}  \,dx\le C \left(\int_{\Omega\cap B_1} |u|^p\,dx  \right)^{\frac{2}{3p}} \left(\int_{\Omega\cap B_1} |\nabla u|^{2}\,dx  \right)^{\frac{2}{3}}
\end{equation}
for every $p>2$.
\end{remark}

As a consequence of this remark, we deduce the following important a priori estimate.

\begin{proposition}\label{prop:L1controlsW12bdry}
Under the assumptions of Proposition \ref{prop:W12+epbdry}, there exists a dimensional constant $C$ such that  
\begin{equation} 
\| \nabla u\|_{L^{2}(\Omega \cap B_{1/2})}  \leq C\| u\|_{L^1(\Omega\cap B_1)}.
\end{equation}
\end{proposition}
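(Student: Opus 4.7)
The plan is to mirror the argument of Proposition \ref{prop:L1controlsW12} essentially verbatim, replacing each of its interior ingredients by the boundary-adapted version now available in this section. The four steps are: (i) an $L^1$-gradient bound $\|\nabla u\|_{L^1(\Omega\cap B_{1/2})}\le C\|u\|_{L^1(\Omega\cap B_1)}$; (ii) the higher-integrability estimate of Proposition \ref{prop:W12+epbdry}; (iii) a H\"older--Young interpolation yielding a bound of the form $\|\nabla u\|_{L^2(\Omega\cap B_{1/2})}^2\le\varepsilon\|\nabla u\|_{L^2(\Omega\cap B_1)}^2+C_\varepsilon\|u\|_{L^1(\Omega\cap B_1)}^2$; and (iv) a rescaling-and-absorption step based on Lemma \ref{lem_abstract}.

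For (i), I would exploit that $u\ge 0$, $-\Delta u=f(u)\ge 0$ in $\Omega\cap B_1$, and $u=0$ on $\partial\Omega\cap B_1$ (so, by Hopf, $-\partial_\nu u=|\partial_\nu u|\ge 0$ there). Testing against a cutoff $\eta\in C_c^\infty(B_1)$ with $\eta\equiv1$ on $B_{3/4}$ gives
\begin{equation*}
\int_{\Omega\cap B_{3/4}}f(u)\,dx + \int_{\partial\Omega\cap B_{3/4}}|\partial_\nu u|\,d\mathcal H^{n-1}\leq -\int_{\Omega\cap B_1}u\,\Delta\eta\,dx\leq C\|u\|_{L^1(\Omega\cap B_1)}.
\end{equation*}
Averaging in $r\in[3/4,7/8]$ one selects a good radius $r_0$ with $\int_{\Omega\cap\partial B_{r_0}}u\,d\mathcal H^{n-1}\le C\|u\|_{L^1(\Omega\cap B_1)}$. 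Representing $u$ on the $C^3$ sub-domain $D:=\Omega\cap B_{r_0}$ via its Dirichlet Green's function and Poisson kernel, and using the classical bound $\int_D|\nabla_x G_D(x,y)|\,dx\le C$ together with the fact that $u$ vanishes on $\partial\Omega\cap\partial D$ (so the Poisson-kernel singularity lies away from $\overline{\Omega\cap B_{1/2}}$), one obtains (i).

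For (iii), H\"older combined with (i) and (ii) gives
\begin{equation*}
\|\nabla u\|_{L^2(\Omega\cap B_{1/2})}^2 \leq \|\nabla u\|_{L^1(\Omega\cap B_{1/2})}^{\frac{\gamma}{1+\gamma}}\|\nabla u\|_{L^{2+\gamma}(\Omega\cap B_{3/4})}^{\frac{2+\gamma}{1+\gamma}} \leq C\|u\|_{L^1(\Omega\cap B_1)}^{\frac{\gamma}{1+\gamma}}\|\nabla u\|_{L^2(\Omega\cap B_1)}^{\frac{2+\gamma}{1+\gamma}},
\end{equation*}
and Young's inequality with conjugate exponents $\tfrac{2(1+\gamma)}{2+\gamma}$ and $\tfrac{2(1+\gamma)}{\gamma}$ yields the absorbable bound of (iii).

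Finally, for (iv), I apply this estimate at every scale. Given $y\in\overline{\Omega}\cap B_{1/2}$ and small $r$, the rescaling $u_{r,y}(x):=u(y+rx)$ is a stable solution on its rescaled domain: if $B_r(y)\subset\Omega$ I invoke the interior Proposition \ref{prop:L1controlsW12}, while otherwise, after translating to a nearby point of $\partial\Omega$ and rotating so that the tangent plane is horizontal, the rescaled domain is still an $(r\vartheta)$-deformation of $B_2^+$, so the estimate of (iii) applies to $u_{r,y}$. Unwinding the scaling yields
\begin{equation*}
r^{n+2}\int_{\Omega\cap B_{r/2}(y)}|\nabla u|^2\,dx \leq \varepsilon\, r^{n+2}\int_{\Omega\cap B_r(y)}|\nabla u|^2\,dx + \frac{C}{\varepsilon}\Big(\int_{\Omega\cap B_1}|u|\,dx\Big)^{2},
\end{equation*}
and Lemma \ref{lem_abstract} applied with $\sigma(B):=\|\nabla u\|_{L^2(\Omega\cap B)}^2$ concludes the proof. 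The principal technical obstacle is step (i): in the interior case this is a direct consequence of Lemma \ref{strongconvergence}(i), whereas in the boundary setting the underlying Green's-function and Poisson-kernel estimates, while classical, require some care on the curved (though $C^3$) sub-domain $D$.
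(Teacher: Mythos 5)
Your argument reaches the correct conclusion, but it takes a genuinely different route from the paper for the crucial ``seed'' estimate, and that is also where your proposal is weakest. The paper never proves a boundary $L^1$-gradient bound: it invokes Remark \ref{asngosnaokn} (the $L^p$-normalized form of the Pohozaev/coarea computation from Step 4 of the proof of Proposition \ref{prop:W12+epbdry}), which gives $\|\nabla u\|_{L^2(\Omega\cap B_{3/4})}\le C\|u\|_{L^p(\Omega\cap B_1)}^{1/3}\|\nabla u\|_{L^2(\Omega\cap B_1)}^{2/3}$, and then interpolates $\|u\|_{L^p}$ between $\|u\|_{L^{2^*}}\le C\|\nabla u\|_{L^2}$ (Sobolev, using that $u$ vanishes on $\partial\Omega\cap B_1$) and $\|u\|_{L^1}$; this yields a power $(2+\zeta)/3<1$ of $\|\nabla u\|_{L^2(\Omega\cap B_1)}$, and Young's inequality then gives exactly the absorbable bound of your step (iii), after which the rescaling and Lemma \ref{lem_abstract} are the same as yours. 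Your steps (ii)--(iv) are fine and parallel the interior proof. The weak point is step (i): the domain $D=\Omega\cap B_{r_0}$ is only Lipschitz (it has an edge where $\partial B_{r_0}$ meets $\partial\Omega$), and the uniform bound $\int_D|\nabla_x G_D(x,y)|\,dx\le C$ is \emph{not} classical on Lipschitz domains; you would need to split according to whether $y$ is near the edge or not, using that the $x$-integration is confined to $\Omega\cap B_{1/2}$, which is doable but involves more potential theory than anything the paper uses. If you wish to keep your route, there is a much cheaper proof of (i): extend $u$ by zero across $\partial\Omega\cap B_1$. Since $u\ge0$, $u=0$ on $\partial\Omega\cap B_1$, and $-\Delta u=f(u)\ge0$, the extension $\tilde u$ satisfies, distributionally in $B_1$, that $-\Delta\tilde u$ equals $f(u)1_\Omega$ plus the nonnegative surface measure $|\partial_\nu u|\,d\mathcal H^{n-1}$ on $\partial\Omega\cap B_1$; hence $\tilde u$ is superharmonic in $B_1$ and Lemma \ref{strongconvergence}(a) applied to $\tilde u$ gives $\|\nabla u\|_{L^1(\Omega\cap B_{1/2})}\le C\|u\|_{L^1(\Omega\cap B_1)}$ with no Green's function at all. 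With that replacement your proof closes and is a legitimate alternative; what the paper's version buys is that it needs neither the $W^{1,1}$ bound nor the higher integrability at this stage, only Remark \ref{asngosnaokn} and Sobolev, whereas yours reuses the interior template at the price of an extra boundary $L^1$-gradient estimate.
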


\begin{proof}
By Remark \ref{asngosnaokn}, {we can choose  $p\in (2, 2^*)$ (here $2^*$ is the Sobolev exponent, or any number less than infinity if $n=2$) and then $\zeta\in(0,1)$
such that $p=\zeta 2^*+(1-\zeta)$,} to obtain 
\[
\begin{split}
\| \nabla u\|_{L^2(\Omega \cap B_{3/4})} & \le C \| u\|_{L^p(\Omega \cap B_{1})}^{1/3} \| \nabla u \|_{L^{2}(\Omega \cap B_{1})}^{2/3} \le C  \| u\|_{L^{2^*}(\Omega \cap B_{1})}^{\zeta/3} \|u\|_{L^{1}(\Omega \cap B_{1})}^{(1-\zeta)/3}\| \nabla u \|_{L^{2}(\Omega \cap B_{1})}^{2/3}
\\& \le 
C\| \nabla u \|_{L^{2}(\Omega \cap B_{1})}^{(2+\zeta)/3} \|u\|_{L^{1}(\Omega \cap B_{1})}^{(1-\zeta)/3}\leq \ep  \| \nabla u \|_{L^{2}(\Omega \cap B_{1})}
+\frac{C}{\ep}\|u\|_{L^{1}(\Omega \cap B_{1})}.
\end{split}
\]
Hence, applying this estimate to the functions $u_{r,y}(x):=u(y+rx)$ for all balls $B_r(y)\subset B_1$ (as in the proof of Proposition \ref{prop:L1controlsW12}), { we can use Lemma \ref{lem_abstract} with $\sigma(B)=\| \nabla u\|_{L^2(\Omega \cap B)}$ to conclude.
}
\end{proof}

\section{\for{toc}{Boundary $C^\alpha$ estimate for $n\leq 9$}\except{toc}{Boundary $C^\alpha$ estimate for $n\leq 9$, and proof of Theorem \ref{thm:globalCalpha}}}

\label{sect:global}

In order to prove Theorem \ref{thm:globalCalpha}, as observed at the beginning of Section \ref{sect:bdry W12g},
every bounded domain of class $C^3$ can be covered by finitely many balls so that, after rescaling the balls to have size $1$, inside each ball the boundary is a $\vartheta$-deformation of $B_2^+$ for some $\vartheta\leq \frac{1}{100}$. 
Hence, by applying Propositions \ref{prop:W12+epbdry} and \ref{prop:L1controlsW12bdry}, we deduce that there exists a neighborhood of $\partial\Omega$ in which the $W^{1,2+\gamma}$-norm of $u$ is controlled by $\|u\|_{L^1(\Omega)}$. Combining this information with  \eqref{eq:W12g L1 int} and a covering argument, we conclude the validity of 
 \eqref{eq:W12g L1 glob}. 
 Hence, we are left with proving \eqref{eq:C0a L1 glob}.
 
 By the same reasoning as the one we just did, but now using \eqref{eq:Ca L1 int} instead of  \eqref{eq:W12g L1 int}, to show \eqref{eq:C0a L1 glob} when $n\leq9$ it suffices to obtain a uniform $C^{\alpha}$ control near the boundary when $\partial\Omega$ is a small $\vartheta$-deformation of $B_2^+$ (recall Definition \ref{def:deform}). 
 Hence, to conclude the proof of Theorem \ref{thm:globalCalpha}, it suffices to show the following:

\begin{theorem}
\label{thm:uptoboundary}
Let $n\leq9$,  $\vartheta\in [0, \frac {1}{100}]$, and $\Omega\subset \R^n$  be a $\vartheta$-deformation of $B_2^+$. Assume that  $u\in C^0(\overline\Omega \cap B_1) \cap C^2(\Omega\cap B_1)$  is a nonnegative stable solution of
$$
-\Delta u=f(u) \quad \mbox{in}\ \Omega\cap B_1\qquad \mbox{and} \qquad u=0 \quad \mbox{on}\ \partial \Omega\cap B_1
$$
for some nonnegative, nondecreasing, convex function $f:\R\to \R$.
Then
$$
\|u\| _{C^\alpha(\overline\Omega\cap B_{1/2} )}  \leq C\|u\|_{L^1(\Omega\cap B_1)},
$$
where $\alpha>0$ and $C$  are dimensional constants.
\end{theorem}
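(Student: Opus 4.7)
The plan is to mimic the three-step proof of the interior $C^\alpha$ bound (Theorem~\ref{thm:L1-CalphaW12}, proved in Section~\ref{sect:int C0a}), adapted to balls centered at boundary points. Concretely, I would reduce to proving that for every $z^* \in \partial\Omega\cap \overline B_{3/4}$ and every $\rho \in (0,1/8)$,
\[
\mathcal D_{z^*}(\rho):=\rho^{2-n}\int_{\Omega \cap B_\rho(z^*)}|\nabla u|^2\,dx \leq C \rho^{2\alpha}\|\nabla u\|^2_{L^2(\Omega\cap B_{3/4})},
\]
for some dimensional $\alpha>0$. Combined with the analogous interior decay (Step~1 of the proof of Theorem~\ref{thm:L1-CalphaW12}, applied at any interior point), the gradient control $\|\nabla u\|_{L^2(\Omega\cap B_{3/4})}\leq C\|u\|_{L^1(\Omega\cap B_1)}$ (which follows by a scaling and covering argument from Propositions~\ref{prop:L1controlsW12} and \ref{prop:L1controlsW12bdry}), and a standard Morrey--Campanato embedding, this yields the $C^\alpha$ seminorm bound; the $L^\infty$ part of the $C^\alpha$ norm follows by interpolation using $u\geq 0$ and $u=0$ on $\partial\Omega\cap B_1$.

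Two ingredients are needed, mirroring the interior case. The first is the boundary analog of Lemma~\ref{conseqestab2}: after translating $z^*$ to the origin, a variant of the test function $\xi=(x\cdot\nabla u)|x|^{(2-n)/2}\zeta$ suitably corrected to accommodate the deformation $\Phi$ (the test function alluded to in the introduction and developed in Section~\ref{sect:bdry W12g}), substituted into the stability inequality, yields for $3\leq n\leq 9$ and $\rho$ small,
\[
\mathcal R_{z^*}(\rho):=\int_{\Omega\cap B_\rho(z^*)}|x-z^*|^{-n}|(x-z^*)\cdot\nabla u|^2\,dx\leq C\rho^{2-n}\int_{\Omega\cap(B_{3\rho/2}(z^*)\setminus B_\rho(z^*))}|\nabla u|^2\,dx.
\]
The key point making this work at the boundary is that $(x-z^*)\cdot\nabla u$ essentially vanishes on $\partial\Omega$ near $z^*$: $u\equiv 0$ on $\partial\Omega$ forces $\nabla u$ to be parallel to the outer unit normal $\nu$, while $x-z^*$ is $C^3$-close to being tangent to $\partial\Omega$ at $z^*$; the residual error from the deformation is absorbed by the smallness $\vartheta\leq 1/100$.

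The second ingredient, and the main obstacle, is the boundary version of Lemma~\ref{lem:22}: under the doubling condition $\int_{\Omega\cap B_1}|\nabla u|^2\geq \delta\int_{\Omega\cap B_2}|\nabla u|^2$, the full gradient on $\Omega\cap B_{3/2}$ is controlled by its radial component on the annulus $\Omega\cap(B_{3/2}\setminus B_1)$. I would argue by contradiction and compactness. Suppose a sequence $u_k$ of stable solutions on $\vartheta_k$-deformations $\Omega_k$ of $B_2^+$ (with $\vartheta_k\leq 1/100$) satisfies the doubling hypothesis, the normalization $\|\nabla u_k\|_{L^2(\Omega_k\cap B_{3/2})}=1$, and $\int_{\Omega_k\cap(B_{3/2}\setminus B_1)}|x\cdot\nabla u_k|^2\to 0$. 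The boundary $W^{1,2+\gamma}$ bound of Proposition~\ref{prop:W12+epbdry} yields uniform higher integrability, and up to extraction the domains converge to a $\vartheta_\infty$-deformation $\Omega_\infty$ of $B_2^+$ while $u_k\to u_\infty$ strongly in $W^{1,2}_{\rm loc}(\Omega_\infty\cap B_2)$. By the closedness Theorem~\ref{thm:stability} applied in $\Omega_\infty\cap B_2$, the limit $u_\infty$ is itself a stable weak solution of $-\Delta u_\infty=f_\infty(u_\infty)$ for some $f_\infty\in\mathcal C$, with $u_\infty=0$ on $\partial\Omega_\infty\cap B_2$, $x\cdot\nabla u_\infty\equiv 0$ a.e.\ in $\Omega_\infty\cap(B_{3/2}\setminus B_1)$, and $\|\nabla u_\infty\|_{L^2(\Omega_\infty\cap B_{3/2})}=1$.

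Here the interior argument (where $0$-homogeneity plus superharmonicity immediately forced constancy) fails, since nontrivial nonnegative $0$-homogeneous Dirichlet-harmonic functions on a half-cone can a priori exist. The resolution uses crucially that $u_\infty$ is a genuine semilinear solution, guaranteed precisely by Theorem~\ref{thm:stability}: since $x\cdot\nabla u_\infty\equiv 0$ on the annulus, $u_\infty$ is $0$-homogeneous there, so $\Delta u_\infty$ is $(-2)$-homogeneous while $f_\infty(u_\infty)$ is $0$-homogeneous; the two can agree only if $f_\infty(u_\infty)\equiv 0$ in the annulus, whence $u_\infty$ is in fact harmonic there. Writing $u_\infty=\phi(x/|x|)$ on the annulus, $\phi$ becomes a Dirichlet-harmonic function on a proper subdomain of $\S^{n-1}$ with zero boundary values, and hence $\phi\equiv 0$ by positivity of the first Dirichlet eigenvalue of the spherical Laplacian on proper subdomains of $\S^{n-1}$. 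The strong maximum principle applied to the nonnegative superharmonic $u_\infty$ then propagates $u_\infty\equiv 0$ from the annulus to all of $\Omega_\infty\cap B_{3/2}$, contradicting $\|\nabla u_\infty\|_{L^2(\Omega_\infty\cap B_{3/2})}=1$. With this boundary doubling lemma established, Lemma~\ref{lem:33} applied to $a_j=\mathcal D_{z^*}(2^{-j-2})$ and $b_j=\mathcal R_{z^*}(2^{-j-2})$ (exactly as in Step~1 of the proof of Theorem~\ref{thm:L1-CalphaW12}) yields the geometric decay of $\mathcal D_{z^*}$, completing the proof.
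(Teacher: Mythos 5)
Your overall strategy is close in spirit to the paper's, and you have correctly isolated the genuinely hard point: at the boundary the ``superharmonic + $0$-homogeneous $\Rightarrow$ constant'' argument of Lemma~\ref{lem:22} fails, and the fix is to run the compactness argument through the closedness result (Theorem~\ref{thm:stability}) so that the limit is again a semilinear solution, after which $0$-homogeneity forces $f_\infty\equiv 0$ on the annulus and the Dirichlet spherical eigenvalue argument kills the harmonic limit. This is exactly Step~2 of Proposition~\ref{classification} in the paper. The architectural difference is that you propose to run the radial inequality and the doubling lemma scale by scale in the curved domain and iterate via Lemma~\ref{lem:33} directly, whereas the paper first performs a blow-up (the monotone quantity $\Theta(r)$ and the almost-maximizing scales $\bar r_m$) to reduce the decay estimate \eqref{eq:dist a} to a Liouville theorem on the exact half-space $\R^n_+$, and only there runs the radial inequality, the doubling lemma, and the iteration.

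The blow-up is not a cosmetic choice, and its absence creates a genuine gap in your first ingredient. The boundary analogue of \eqref{eq:firstest} is Lemma~\ref{lem:x Du def}, whose right-hand side carries the error $C\vartheta\int_{\Omega\cap B_1}|\nabla u|^2\bigl(\eta^2+|x|\,|\nabla(\eta^2)|+|x|^2|\nabla\eta|^2\bigr)\,dx$. Testing with $\eta=|x-z^*|^{-(n-2)/2}\zeta$ at scale $\rho$, this error contains $C\rho\vartheta\int_{\Omega\cap B_{3\rho/2}(z^*)}|\nabla u|^2|x-z^*|^{2-n}\,dx$, a weighted Dirichlet integral over the \emph{whole} ball, not the annulus. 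At the critical exponent $a=n-2$ the coefficient $(n-2-a)$ of the bulk term $\int|\nabla u|^2|x-z^*|^{-a}\zeta^2$ on the left-hand side is exactly zero, so there is nothing to absorb this error into, and it is not dominated by the radial term $\int|x-z^*|^{-n}|(x-z^*)\cdot\nabla u|^2$ either; dyadically it behaves like $\rho\vartheta\sum_j\mathcal D_{z^*}(2^{-j}\rho)$, which is precisely the quantity whose decay you are trying to prove. Hence the clean inequality $\mathcal R_{z^*}(\rho)\le C\rho^{2-n}\int_{\Omega\cap(B_{3\rho/2}(z^*)\setminus B_\rho(z^*))}|\nabla u|^2\,dx$ does not follow from ``absorption by $\vartheta\le 1/100$'' as claimed. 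In the paper this error disappears because the rescaled domains are $(CR\bar r_m)$-deformations with $\bar r_m\to0$, and the inequality \eqref{P6} is only needed, error-free, for the blow-up limit on $\R^n_+$ (compare with the proof of Theorem~\ref{thm11 2}, where taking $a<n-2$ strictly leaves room to absorb $C\rho\vartheta$ --- an option unavailable here since you need $a=n-2$ to produce the weight $|x-z^*|^{-n}$). A secondary consequence of skipping the blow-up is that your compactness limit in the doubling lemma lives in a curved $\vartheta_\infty$-deformation rather than in $B_2^+$, so the annular region is not a cone, radial fibers need not be intervals, and the identification $u_\infty=\phi(x/|x|)$ with $\phi$ harmonic on a fixed spherical subdomain requires justification; in the paper the blow-up guarantees the limit domain is exactly the half-space.
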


To prove this theorem, we first need the boundary analogue of the key interior estimate \eqref{eq:firstest}.

\begin{lemma}
\label{lem:x Du def}
Let $\Omega\subset \R^n$  be a $\vartheta$-deformation of $B_2^+$  for $\vartheta \in[0,\frac{1}{100}]$, and
let $u\in C^2(\overline \Omega\cap B_1)$ be a nonnegative stable solution of $-\Delta u=f(u)$ in $\Omega\cap B_1$, with $u=0$ on $\partial \Omega\cap B_1$.
Assume that $f$ is locally Lipschitz. 

Then there exists a dimensional constant $C$ such that, for all $\eta \in C^{0,1}_c (B_1)$,
\begin{multline*}
\int_{\Omega\cap B_1} \Big(\big\{(n-2)\eta + 2 x\cdot\nabla \eta \}\,\eta\, |\nabla u|^2 - 2(x\cdot \nabla u) \nabla u\cdot \nabla(\eta^2) - |x\cdot \nabla u|^2 |\nabla \eta|^2 \Big)\,dx \\
\le C\vartheta\int_{\Omega\cap B_1}  |\nabla u|^2 
\bigl(\eta^2+|x|\,|\nabla(\eta^2)| +|x|^2|\nabla\eta|^2 \bigr)\,dx.
\end{multline*}
\end{lemma}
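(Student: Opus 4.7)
The plan is to follow Step 1 of the proof of Lemma \ref{conseqestab2}, but with the test function $\xi = \mathbf{c}\,\eta$ where $\mathbf{c} := Z\cdot\nabla u$ and $Z$ is a $C^3$ vector field on $\overline\Omega\cap B_1$ that is tangent to $\partial\Omega\cap B_1$ and close to $x$. A natural choice is $Z(x) := D\Phi(\Phi^{-1}(x))\,\Phi^{-1}(x)$, namely the $\Phi$-pushforward of the radial field $y\mapsto y$: since the latter is tangent to $\{y_n=0\}$, $Z$ is tangent to $\partial\Omega = \Phi(\{y_n=0\}\cap B_2)$. The hypotheses on $\Phi$ imply that $V := Z - x$ satisfies
\[
|V|\le C\vartheta|x|^2,\quad |DV|\le C\vartheta|x|,\quad |D^2V|+|D^3V|\le C\vartheta.
\]
Because $u=0$ on $\partial\Omega\cap B_1$ forces $\nabla u$ to be parallel to $\nu$ there, $\mathbf{c}=Z\cdot\nabla u$ vanishes on $\partial\Omega\cap B_1$, so $\xi$ is admissible in \eqref{stabilityLip}.

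Imitating Step 1 of Lemma \ref{conseqestab2} and using Lemma \ref{lem:eq Du}(ii), one computes $\Delta\mathbf{c}+f'_-(u)\mathbf{c} = 2\Delta u + \Delta V\cdot\nabla u + 2\,\mathrm{tr}(DV\,D^2u)$, so the stability inequality reads
\[
\int_{\Omega\cap B_1}\!\!\bigl(2\Delta u + \Delta V\cdot\nabla u + 2\,\mathrm{tr}(DV\,D^2u)\bigr)(Z\cdot\nabla u)\,\eta^2\,dx \le \int_{\Omega\cap B_1}\!\!|Z\cdot\nabla u|^2|\nabla\eta|^2\,dx.
\]
I would then handle the principal term $2\int(Z\cdot\nabla u)\Delta u\,\eta^2$ via the Pohozaev-type identity
\[
\mathrm{div}\bigl(2(Z\cdot\nabla u)\nabla u - |\nabla u|^2 Z\bigr) = 2(Z\cdot\nabla u)\Delta u - 2(\nabla Z)[\nabla u,\nabla u] + |\nabla u|^2\,\mathrm{div}\,Z.
\]
The crucial point is that $\bigl(2(Z\cdot\nabla u)\nabla u - |\nabla u|^2 Z\bigr)\cdot\nu = -u_\nu^2(Z\cdot\nu) = 0$ on $\partial\Omega\cap B_1$, since $Z$ is tangent; combined with $\eta^2 = 0$ on $\Omega\cap\partial B_1$, the boundary integral vanishes entirely. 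Writing $Z = x + V$ (so $\nabla Z = I + \nabla V$, $\mathrm{div}\,Z = n + \mathrm{div}\,V$) and substituting back reproduces exactly the left-hand side of the claimed inequality, plus correction terms involving only $V,\nabla V,\mathrm{div}\,V$ paired with at most first derivatives of $u$. All such corrections, together with the one in $|Z\cdot\nabla u|^2|\nabla\eta|^2$, are directly bounded by $C\vartheta\int|\nabla u|^2(\eta^2 + |x||\nabla\eta^2| + |x|^2|\nabla\eta|^2)$ from the pointwise estimates on $V$ and $|x|\le 1$.

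The term $\int(\Delta V\cdot\nabla u)(Z\cdot\nabla u)\eta^2$ in the stability inequality is bounded directly via $|\Delta V|\le C\vartheta$ and $|Z|\le C|x|$. The only nontrivial contribution is $2\int\mathrm{tr}(DV\,D^2u)(Z\cdot\nabla u)\eta^2$; since $D^2u$ is symmetric, it equals $2\int\mathrm{tr}(S\,D^2u)(Z\cdot\nabla u)\eta^2$ with $S:=(DV+DV^T)/2$ symmetric and satisfying $|S|\le C\vartheta|x|$, $|\nabla S|\le C\vartheta$. Integrating by parts in $\partial_j$ (using $u_{ij}=\partial_j u_i$), the boundary contribution vanishes because $(Z\cdot\nabla u)|_{\partial\Omega}=0$, and the interior expansion isolates a single remaining $D^2u$-piece
\[
L \;:=\; -\!\int_{\Omega\cap B_1} S_{ij}\,u_i\,Z^k\,u_{kj}\,\eta^2\,dx.
\]
A further integration by parts on $L$ in $\partial_k$ (again with vanishing boundary, since $Z\cdot\nu=0$) combined with the symmetries $S_{ij}=S_{ji}$ and $u_{ki}=u_{ik}$ shows that the reappearing $D^2u$-piece equals $-L$; hence $2L$ is a sum of terms bounded by $C\vartheta\int|\nabla u|^2(\eta^2+|x||\nabla\eta^2|)$, giving the required bound on $L$ and thus on the $\mathrm{tr}$-term.

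The main obstacle is identifying and executing this symmetric self-cancellation for $L$; once it is in place, the rest is careful bookkeeping with the three weights on the right-hand side. A notable feature is that, by using the tangent field $Z$ both as the multiplier in $\mathbf{c}$ and inside the Pohozaev-type identity (and again in each integration by parts of the $\mathrm{tr}$-term), every boundary integral vanishes automatically, so no boundary trace inequality in the spirit of Lemma \ref{lem:auxbdry2} and no sign or monotonicity assumption on $f$ is needed.
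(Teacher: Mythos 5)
Your proposal follows essentially the same route as the paper: the same tangential pushforward field $\mathbf{X}(x)=D\Phi(\Phi^{-1}(x))\,\Phi^{-1}(x)$, the same test function $(\mathbf{X}\cdot\nabla u)\,\eta$, the same computation of $\Delta \mathbf{c}+f'_-(u)\mathbf{c}$ via Lemma \ref{lem:eq Du}, and the same mechanism whereby tangency of $\mathbf{X}$ makes every boundary flux vanish; the paper merely packages your $\Delta u$ and $\mathrm{tr}(DV\,D^2u)$ contributions into a single divergence identity for $2(\mathbf{X}\cdot\nabla u)\,\mathrm{div}\bigl((\nabla\mathbf{X})^s\nabla u\bigr)$, whereas you treat the perturbative second-order piece by a separate symmetric integration-by-parts cancellation, and both versions close correctly with the stated $\vartheta$-weights. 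One small slip: the last two terms of your Pohozaev identity should read $+2(\nabla Z)[\nabla u,\nabla u]-|\nabla u|^2\,\mathrm{div}\,Z$ (the check $Z=x$, which must yield $(2-n)|\nabla u|^2$ as in Lemma \ref{conseqestab2}, confirms this), which is evidently what you use when you say the left-hand side is reproduced.
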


\begin{proof}
The key idea is to use a variant of $\xi=(x\cdot \nabla u)\eta$ as test function in the stability inequality (note that this function vanishes on the boundary if $\partial\Omega\cap B_1 =\{x_n=0\}\cap B_1$ is flat).

We consider the vector-field
$$
\mathbf{X}(x)=(D\Phi)(\Phi^{-1}(x))\cdot\Phi^{-1}(x)\qquad \mbox{for all }\,x \in \Omega \cap B_1,
$$
with $\Phi$ as in Definition \ref{def:deform}.
Note that $\mathbf{X}$ is tangential to $\partial\Omega$ since, for $x\in \partial\Omega\cap B_1$, $\Phi^{-1}(x)$ is tangent to the flat boundary of $B_1^+$.
Hence, since $u=0$ on $\partial \Omega\cap B_1$, we deduce that $\mathbf{X}\cdot \nabla u=0$ on $\partial\Omega\cap B_1$.
Also, since $\Omega$ is a $\vartheta$-deformation of $B_2^+$, it is easy to check that
\begin{equation}
\label{eq:X}
|\mathbf{X}-x|\leq C\vartheta|x|^2,\qquad |\nabla \mathbf{X}-{\rm Id}|\leq C\vartheta|x|,\qquad
|D^2 \mathbf{X}|\leq C\vartheta,
\end{equation}
where $C$ is a dimensional constant.
The bound on $D^2\mathbf{X}$ follows by a direct computation, while the two first ones follow by integrating the latter and using that $\nabla\mathbf{X}(0)=\textrm{Id}$ and $\mathbf{X}(0)=0$.

Set $\textbf{c} := \mathbf{X}\cdot \nabla u$,
and take $\eta \in C^{2}_c\big({B_1}\big)$.
Note that $\textbf{c}\equiv 0$ on $\partial\Omega\cap B_1$ and $\textbf{c}\in (W^{2,p}_{\rm loc}\cap C^1)(\overline\Omega\cap B_1)$ for all $p<\infty$ (thanks to Lemma \ref{lem:eq Du}).  
Hence, {arguing as usual by approximation, one is allowed to take} $\xi = \textbf{c}\eta$ as a test function in the stability inequality \eqref{stabilityLip}.
Thus, using that $\textbf{c}$ vanishes on $\partial\Omega\cap B_1$, integration by parts yields
\begin{equation}\label{ajgpen1 2}
\int_{\Omega\cap B_1} \big\{\Delta \textbf{c} +f_-'(u)\textbf{c}\big\}\,\textbf{c}\, \eta^2 \,dx\le \int_{\Omega\cap B_1} \textbf{c}^2 |\nabla \eta|^2 \,dx.
\end{equation}
By a direct computation it follows that
\begin{align*}
\Delta \textbf{c}&=\mathbf{X}\cdot \nabla \Delta u+2\nabla \mathbf{X}:D^2u+\Delta \mathbf{X}\cdot \nabla u\\
&=-f'_-(u)\,\mathbf{X}\cdot \nabla u+2(\nabla \mathbf{X})^s:D^2u+\Delta \mathbf{X}\cdot \nabla u\\
&=-f'_-(u)\,\textbf{c}+2{\rm div}\bigl((\nabla \mathbf{X})^s \nabla u\bigr)+\bigl[\Delta \mathbf{X}-2{\rm div}\bigl((\nabla\mathbf{X})^s\bigr)\bigr]\cdot \nabla u,
\end{align*}
where  $(\nabla \mathbf{X})^s:=\frac12 (\nabla \mathbf{X}+(\nabla \mathbf{X})^*)$ is the symmetrized version of $\nabla \mathbf{X}$
and 
we used that $\nabla \mathbf{X}:D^2u=(\nabla \mathbf{X})^s:D^2u$ (since $D^2u$ is a symmetric matrix).

Hence, substituting this identity in \eqref{ajgpen1 2} and using \eqref{eq:X} we get
\begin{equation}\label{ajgpen1 3}
\int_{\Omega\cap B_1} |\mathbf{X}\cdot \nabla u|^{2}\left|\nabla \eta\right|^{2} \,dx \geq 2\int_{\Omega \cap B_1} (\mathbf{X}\cdot \nabla u)\,{\rm div}\bigl((\nabla \mathbf{X})^s \nabla u\bigr)\,\eta^{2}\, dx
- C\vartheta \int_{\Omega \cap B_1} |\nabla u|^2\,\eta^2\, dx.
\end{equation}
Noticing that $|\nabla \mathbf{X}-{\rm Id}|+|(\nabla \mathbf{X})^s-{\rm Id}|+|{\rm div}\mathbf{X}-n|+|\nabla (\nabla \mathbf{X})^s|\leq C\vartheta $ ({as a consequence of \eqref{eq:X}}), we see that
\[\begin{split}
{\rm div}\bigl(2 &(\mathbf{X}\cdot \nabla u)\,[(\nabla \mathbf{X})^s \nabla u] -\big\{[(\nabla \mathbf{X})^s \nabla u]\cdot\nabla u\bigr\}\mathbf{X}\bigr) \\
&=2(\mathbf{X}\cdot \nabla u)\,{\rm div}\bigl((\nabla \mathbf{X})^s \nabla u\bigr)
+2[\nabla \mathbf{X} \nabla u]\cdot [(\nabla \mathbf{X})^s\nabla u]\\
&\qquad-{\rm div}\mathbf{X}\,\big\{[(\nabla \mathbf{X})^s \nabla u]\cdot\nabla u\bigr\}-
\big\{[\mathbf{X}\cdot \nabla (\nabla \mathbf{X})^s]\cdot \nabla u\bigr\}\cdot\nabla u\bigr\}\\
&= 2(\mathbf{X}\cdot \nabla u)\,{\rm div}\bigl((\nabla \mathbf{X})^s \nabla u\bigr)+(2-n)|\nabla u|^2 +O(\vartheta |\nabla u|^2).
\end{split}\]
Hence, using this identity in \eqref{ajgpen1 3}, and taking into account \eqref{eq:X} and that $\mathbf{X}\cdot\nabla u=0$ and $\mathbf{X}\cdot\nu=0$ on $\partial\Omega\cap B_1$, we get
\[\begin{split}
\int_{\Omega\cap B_1} &|x\cdot \nabla u|^{2}\left|\nabla \eta\right|^{2} \,dx+C\vartheta \int_{\Omega \cap B_1} |\nabla u|^2\,\eta^2\, dx+C\vartheta\int_{\Omega\cap B_1}|\nabla u|^2|x|^2|\nabla\eta|^2\,dx\\
& \geq \int_{\Omega \cap B_1}\Big({\rm div}\bigl(2 (\mathbf{X}\cdot \nabla u)\,[(\nabla \mathbf{X})^s \nabla u] -\big\{[(\nabla \mathbf{X})^s \nabla u]\cdot\nabla u\bigr\}\mathbf{X}\bigr)+(n-2)|\nabla u|^2\Bigr)\eta^2\,dx\\
&=\int_{\Omega\cap B_1}\Big(-2(\mathbf{X}\cdot \nabla u) [(\nabla \mathbf{X})^s \nabla u]\cdot \nabla (\eta^2) +\big\{[(\nabla \mathbf{X})^s \nabla u]\cdot\nabla u\bigr\}\mathbf{X}\cdot \nabla (\eta^2)\Bigr)\,dx\\
&\qquad +\int_{\Omega\cap B_1}(n-2)|\nabla u|^2\eta^2\,dx\\
&\geq \int_{B_1}\Big(-2(x\cdot \nabla u) \nabla u\cdot \nabla (\eta^2) +|
\nabla u|^2x\cdot \nabla (\eta^2)+(n-2)|\nabla u|^2\eta^2\Bigr)\,dx\\
&\qquad-C\vartheta \int_{\Omega \cap B_1} |\nabla u|^2\,|x|\,|\nabla (\eta^2)|\, dx.
\end{split}\]
This proves the result for $\eta \in C^2_c(B_1)$, and the general case follows by approximation.
\end{proof}

To prove Theorem \ref{thm:uptoboundary} we will use a blow-up argument that will rely on the following Liouville-type result in a half-space.
In the blown-up domains, the constant $\vartheta$ in Lemma \ref{lem:x Du def} will tend to zero.
Recall that the class $\mathcal S(U)$, for $U\subset\R^n$, was defined in \eqref{class-S}.
We use the notation $\R^n_+:=\R^n\cap \{x_n>0\}.$

\begin{proposition}\label{classification}
When $3\leq n\leq 9$, there exists  a dimensional constant $\alpha_n>0$ such that the following holds. 
Assume that $u: \R^n_+ \rightarrow \R$ belongs to  $W^{1,2}_{\rm loc }\big(\overline{\R^n_+}\big)\cap C^0_{\rm loc}(\R^n_+)$,  $u \in \mathcal S(\R^n_+)$, and $u=0$ on $\{x_n=0\}$ in the trace sense.
Suppose in addition that, for some $\alpha \in (0,\alpha_n)$ and $\gamma>0$, denoting $u_R(x):= u(Rx)$ we have
\begin{equation}\label{P4}
  \|\nabla u_R\|_{L^{2+\gamma}(B_{3/2}^+)} \le C_1\|\nabla u_R\|_{L^{2}(B_2^+)} \le C_2 R^{\alpha}\qquad \mbox{for all }\,R\geq 1
\end{equation}
with constants $C_1$ and $C_2$ independent of $R$,
and that $u$ satisfies
\begin{equation}\label{P6}
 \int_{\R^n_+} \Big(\big\{(n-2)\eta + 2 x\cdot\nabla \eta \}\,\eta\, |\nabla u|^2 - 2(x\cdot \nabla u) \nabla u\cdot \nabla(\eta^2) - |x\cdot \nabla u|^2 |\nabla \eta|^2 \Big)\,dx \le 0
\end{equation}
for all $\eta \in C^{0,1}_c \big( \overline{\R^n_+ }\big)$.
Then $u\equiv 0$.
\end{proposition}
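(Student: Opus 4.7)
The plan is a blow-down argument combined with the compactness of the class $\mathcal{S}$ provided by Theorem~\ref{thm:stability}. First, I test (P6) with $\eta=|x|^{-(n-2)/2}\zeta$, truncated near the origin exactly as in Step~2 of the proof of Lemma~\ref{conseqestab2}. Since $u=0$ on $\{x_n=0\}$, the quantity $x\cdot\nabla u$ also vanishes there, so no contribution from the flat boundary arises (in particular $\eta$ need not vanish on $\{x_n=0\}$). The restriction $3\leq n\leq 9$ keeps $(n-2)(10-n)/4$ strictly positive; taking $\zeta\equiv 1$ on $B_R^+$, $\zeta\equiv 0$ outside $B_{2R}^+$ with $|\nabla\zeta|\leq C/R$, and invoking (P4), I obtain
$$\int_{B_R^+}|x|^{-n}|x\cdot\nabla u|^2\,dx \leq CR^{2-n}\int_{B_{2R}^+\setminus B_R^+}|\nabla u|^2\,dx \leq CR^{2\alpha}\qquad\text{for all }R\geq 1.$$

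Next, I argue by contradiction: suppose $u\not\equiv 0$, and let $\mathcal{N}(R):=R^{1-n/2}\|\nabla u\|_{L^2(B_R^+)}$, which by (P4) satisfies $\mathcal{N}(R)\leq C_2R^{\alpha}$. Upon replacing $\alpha$ by $\limsup_{R\to\infty}\log\mathcal{N}(R)/\log R$ (still $<\alpha_n$), I may assume $\mathcal{N}(R_k)\sim R_k^{\alpha}$ along some sequence $R_k\to+\infty$. Set
$$v_k(x):=\mathcal{N}(R_k)^{-1}u(R_kx),\qquad f_k(t):=R_k^{2}\,\mathcal{N}(R_k)\,f\bigl(\mathcal{N}(R_k)\,t\bigr).$$
Then $f_k\in\mathcal{C}$, $v_k\in\mathcal{S}(\R^n_+)$, $v_k=0$ on $\{x_n=0\}$, $\|\nabla v_k\|_{L^2(B_1^+)}=1$, and the rescaled (P4) gives $\|\nabla v_k\|_{L^2(B_R^+)}\leq C_R$ uniformly in $k$. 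Theorem~\ref{thm:stability} (with $U=\R^n_+$) produces, along a subsequence, $v_k\to v_\infty$ in $W^{1,2}_{\rm loc}$ with $v_\infty\in\mathcal{S}(\R^n_+)$, $v_\infty=0$ on $\{x_n=0\}$, and $\|\nabla v_\infty\|_{L^2(B_1^+)}=1$. A diagonal extraction along geometric rescalings upgrades the limit to $\alpha$-homogeneous: $v_\infty(\lambda x)=\lambda^{\alpha}v_\infty(x)$ for every $\lambda>0$.

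It remains to classify such $v_\infty$. Writing $v_\infty(x)=|x|^{\alpha}\omega(x/|x|)$ with $\omega$ on $S^{n-1}_+$ vanishing on $\partial S^{n-1}_+$, the equation $-\Delta v_\infty=F(v_\infty)$ for some $F\in\mathcal{C}$ becomes $-|x|^{\alpha-2}[\alpha(\alpha+n-2)\omega+\Delta_{S^{n-1}}\omega]=F(|x|^{\alpha}\omega)$. Matching the $r$-dependence ray by ray, the only $F\in\mathcal{C}$ compatible with $\alpha\in(0,2)$ is $F\equiv 0$, so $v_\infty$ is harmonic and $\omega$ is a Dirichlet eigenfunction of $-\Delta_{S^{n-1}}$ on $S^{n-1}_+$ with eigenvalue $\alpha(\alpha+n-2)$. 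The first such Dirichlet eigenvalue on $S^{n-1}_+$ equals $n-1$, realized by $\omega(\sigma)=\sigma_n$ and corresponding to $\alpha=1$; any smaller $\alpha>0$ admits only $\omega\equiv 0$. Setting $\alpha_n\in(0,1)$ dimensional, the assumption $\alpha<\alpha_n<1$ forces $v_\infty\equiv 0$, contradicting $\|\nabla v_\infty\|_{L^2(B_1^+)}=1$, and hence $u\equiv 0$. The main obstacle is this classification step: one must verify that the structural constraints of $\mathcal{C}$ (lsc, nondecreasing, convex, nonnegative) preclude any nontrivial power-law $F$ for $\alpha\in(0,2)$, and then pin down $\alpha_n$ explicitly via the sharp Dirichlet eigenvalue gap on the upper half-sphere.
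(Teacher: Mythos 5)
Your Step 1 (testing \eqref{P6} with $\eta=|x|^{-(n-2)/2}\zeta$, truncated at the origin) coincides with Step 1 of the paper's proof and is correct. The rest of your argument, however, hinges on a step that is not justified and does not follow from anything you have established: the claim that ``a diagonal extraction along geometric rescalings upgrades the limit to $\alpha$-homogeneous.'' Blow-down limits are homogeneous only when one has a monotonicity formula (Almgren frequency, a Weiss-type energy, or similar) forcing the relevant scale-invariant quantity to converge; no such formula is available here, and your Step 1 bound does not supply one. Indeed, after your normalization the rescaled radial-derivative quantity $\int_{B_1^+}|x|^{-n}|x\cdot\nabla v_k|^2\,dx$ is merely \emph{bounded} (it equals $\mathcal N(R_k)^{-2}\int_{B_{R_k}^+}|y|^{-n}|y\cdot\nabla u|^2\,dy\lesssim \mathcal N(2R_k)^2/\mathcal N(R_k)^2$), so the limit $v_\infty$ need not satisfy $x\cdot\nabla v_\infty=\alpha v_\infty$ or anything close to it. Without homogeneity your classification step (separation of variables, the eigenvalue gap $n-1$ on $S^{n-1}_+$) cannot be launched, so the proof does not close. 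A secondary, fixable issue: Theorem~\ref{thm:stability} only gives $W^{1,2}_{\rm loc}(\R^n_+)$ convergence in the \emph{open} half-space, so to get $\|\nabla v_\infty\|_{L^2(B_1^+)}=1$ you must rule out concentration of $|\nabla v_k|^2$ near $\{x_n=0\}$; this does follow from the $L^{2+\gamma}$ control in \eqref{P4}, but it needs to be said.

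For comparison, the paper never proves (and never needs) homogeneity of a global blow-down. It runs a dyadic dichotomy at every scale: either the Dirichlet energy drops by a fixed factor from $B_{2\rho}^+$ to $B_\rho^+$, or it doubles, and in the doubling case a compactness argument (using Theorem~\ref{thm:stability} plus the boundary $W^{1,2+\gamma}$ estimate) shows that the full gradient on $B_{3/2}^+$ is controlled by $\int_{B_{3/2}^+\setminus B_1^+}|x|^{-n}|x\cdot\nabla u|^2$. There the limiting object \emph{is} $0$-homogeneous on the half-annulus, but only because the annulus quantity tends to zero by the contradiction hypothesis—homogeneity is an output of the compactness, not an input. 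Feeding the dichotomy into Lemma~\ref{lem:33} yields the decay $\|x\cdot\nabla u_{rR}\|_{L^2(B_1^+)}\le Cr^{\alpha_n}\|\nabla u_R\|_{L^2(B_1^+)}$, and choosing $r=M/R$ with $R\to\infty$ kills $x\cdot\nabla u_M$ since $\alpha<\alpha_n$. If you want to salvage your route, you would have to either establish a monotonicity formula adapted to this semilinear stable setting (not known) or replace the homogeneous blow-down by the paper's scale-by-scale decay scheme.
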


\begin{proof}
Let us define,
for $\rho>0$, 
\[
\mathcal D(\rho) :=  \rho^{2-n}\int_{B_\rho^+} |\nabla u|^2\,dx\qquad \mbox{and} \qquad\mathcal  R(\rho) :=  \int_{B_\rho^+}  |x|^{-n} |x\cdot \nabla u|^2\,dx.
\]
We divide the proof in three steps. {As we shall see, for the validity of Step 1 the assumption $3\leq n \leq 9$ is crucial.}\\

\noindent
{\bf Step 1: } {\it We prove that, for all $\rho>0$,}
\begin{equation}\label{ghiowhwwoh}
\mathcal  R(\rho)\le C\rho^{2-n}\int_{B_{2\rho}^+ \setminus B_\rho^+} |\nabla u|^2\,dx
\end{equation}
{\it for some dimensional constant $C>0$.}

Let $\psi\in C^\infty_c (B_2)$ be some  radial decreasing nonnegative cut-off function with $\psi\equiv 1$ in $B_1$, and set $\psi_\rho(x) := \psi(x/\rho)$.
Then, as in the interior case, for $a<n$ and $\varepsilon\in(0,\rho)$ we use the Lipschitz function $\eta_\varepsilon(x):=\min\{|x|^{-a/2},\varepsilon^{-a/2}\}\psi_\rho(x)$ as a test function in \eqref{P6}.
Hence, noting that $\nabla \psi_\rho$  has size $C/\rho$ and vanishes outside of the annulus $B_{2\rho}\setminus B_\rho$, and throwing away the term $\int_{\R^n_+\cap B_\varepsilon}(n-2)\eta_\varepsilon^2|\nabla u|^2dx$, we obtain
$$
 \int_{\R^n_+\setminus B_\varepsilon}   \Big\{(n-2-a)    |\nabla u|^2 + \Big( 2a -\frac{a^2}{4} \Big) |x\cdot \nabla u|^2 |x|^{-2}   \Big\}  |x|^{-a} \psi_\rho^2\,dx
 \le C(n, a) \rho^{-a}\int_{B_{2\rho}^+\setminus B_\rho} |\nabla u|^2\,dx.
$$
Choosing $a : = n-2$, since   $ 2a -\frac{a^2}{4}   = (n-2)\bigl(2-\frac{n-2}{4}\bigr)=\frac14(n-2)(10-n) >0$ for $3\le n\le 9$ we obtain
\[  
\int_{\R^n_+\setminus B_\varepsilon} |x|^{-n}  |x\cdot \nabla u|^2 \, \psi_\rho^2\,dx \le C \rho^{2-n}\int_{B_{2\rho}^+\setminus B_\rho} |\nabla u|^2\,dx.
\]
Recalling that  $\psi_\rho^2\equiv 1$ in $B_\rho$,
the claim follows by letting $\varepsilon\downarrow0$.\\

\noindent
{\bf Step 2:} {\it We prove that there exists a {dimensional} constant $C$ such that, if for some $R \ge 1$ we have
\[
\int_{B_1^+}  |\nabla u_R |^2\,dx \ge \frac 1 2 \int_{B_2^+}  |\nabla u_R |^2 \,dx,
\]
then
\[
\int_{B_{3/2}^+}  |\nabla u_R |^2\,dx \le   C \int_{B_{3/2}^+\setminus B_1^+} 
|x|^{-n}  |x\cdot \nabla u_R|^2  \,dx.
\]
}

The proof is by compactness. We assume by contradiction that 
we have a sequence $u_k:=u_{R_k}/\|\nabla u_{R_k}\|_{L^2(B_{3/2}^+)} \in \mathcal S(B_2^+)\cap W^{1,2}_{\rm loc}(\overline{\R^n_+})$, with $u_k = 0$ on $\{x_n=0\}$, satisfying 
\begin{equation}\label{agipwhioghwio0}
\int_{B_1^+}  |\nabla u_k |^2\,dx \ge \frac 1 2 \int_{B_2^+}  |\nabla u_k |^2\,dx,
\end{equation}
\begin{equation}\label{agipwhioghwio}
\int_{B_{3/2}^+}  |\nabla u_k |^2\,dx=1,\qquad \textrm{and}\qquad \int_{B_{3/2}^+\setminus B_1^+} |x|^{-n}  |x\cdot \nabla u_k|^2 \,dx \to 0.
\end{equation}
Note that, since $\int_{B_2^+}  |\nabla u_k |^2\,dx\leq2$, thanks to Lemma \ref{strongconvergence} and our interior $W^{1,2+\gamma}$ estimate
there exists a function $u$ such that, up to a subsequence, $u_k\to u$ strongly in $W^{1,2}_{\rm loc}(B_2^+).$
On the other hand, using the first bound in \eqref{P4},
for every $\delta \in (0,1)$ we have
\[\begin{split}
\int_{B_{3/2}^+\cap \{x_n \leq \delta\}} |\nabla u_k|^2\,dx&\leq \Big(\int_{B_{3/2}^+\cap \{x_n \leq \delta\}} |\nabla u_k|^{2+\gamma}\,dx\Big)^{2/(2+\gamma)} \bigl|B_{3/2}^+\cap \{x_n \leq \delta\}\bigr|^{\gamma/(2+\gamma)}\\
& \leq C\delta^{\gamma/(2+\gamma)}.
\end{split}\]
This means that the mass of $|\nabla u_k|^2$ near the boundary can be made arbitrarily small by choosing $\delta$ small enough.
Combining this information with the convergence of $u_k\to u$ in $W^{1,2}_{\rm loc}(B_2^+)$ we deduce that
$u_k\to u$ strongly in $W^{1,2}(B_{3/2}^+)$. 
Moreover, by  Theorem \ref{thm:stability} we obtain that $u \in \mathcal S(B_{3/2}^+)$, and taking the limit in \eqref{agipwhioghwio} we obtain 
\[
\int_{B_{3/2}^+}  |\nabla u|^2\,dx =1  \qquad \mbox{and} \qquad x\cdot\nabla u \equiv 0\quad \mbox{ in } B_{3/2}^+\setminus B_1^+. 
\]
Moreover, since the trace operator is continuous in $W^{1,2}(B_{3/2}^+)$, we deduce that $u=0$ on  $\{x_n=0\}\cap B_{3/2}$.

Hence, we have found a function $u\in \mathcal S(B_{3/2}^+)$ which  is $0$-homogeneous
 in the half annulus $B_{3/2}^+\setminus\overline{B_1^+}$.
 In particular, since $u$ is a weak solution of $-\Delta u = f(u)$ in $B_{3/2}^+$ with $-\Delta u=f(u)\in L^1_{\rm loc}\cap C^0(B_{3/2}^+\setminus \overline{B_1^+})$, this is only possible if $f\equiv 0$ (this follows from the fact that $\Delta u$ is $(-2)$-homogeneous while $f(u)$ is $0$-homogeneous). 
 It follows that $u$ is a $0$-homogeneous harmonic function in the half annulus $B_{3/2}^+ \setminus \overline{B_1^+}$ vanishing on $\partial(B_{3/2}^+\setminus \overline{B_1^+})\cap\{x_n=0\}$.
Hence, as in the proof of Lemma \ref{lem:22}, the {supremum and infimum of $u$ are attained at interior points, and thus $u$ must be zero by the strong maximum principle}. 
Furthermore, exactly as in the proof of Lemma \ref{lem:22}, the superharmonicity of $u$ combined with the fact that $u$  vanishes in $B_{3/2}^+\setminus B_1^+$ gives that 
$u$ vanishes in $B_{3/2}^+$.
This contradicts the fact that $\int_{B_{3/2}^+}  |\nabla u|^2\,dx =1$ and concludes the proof.\\

\noindent 
 {\bf Step 3:} {\it Conclusion}.
 
 Exactly as in Step 1 of the proof of Theorem \ref{thm:L1-CalphaW12},
 using Lemma \ref{lem:33} (combined with Steps 1 and 2 above) we deduce that
 \[
 \|x\cdot \nabla u_{r R} \|_{L^2(B_1^+)} \leq Cr^{\alpha_n} \|\nabla u_R\|_{L^2(B_1^+)}\qquad \text{for all $r\in (0,1/2)$ and $R\ge 1$}, 
 \]
 where $C$ and $\alpha_n>0$ are dimensional constants. 
 Hence, since by assumption $\|\nabla u_R\|_{L^2(B_1^+)}\le C R^\alpha$ with $\alpha<\alpha_n$, given a constant $M>0$, we choose $r = M/R$ and let $R\to \infty$ to find 
  \[
 \|x\cdot \nabla u_{M} \|_{L^2(B_1^+)} =0. 
 \]
 Since $u_M\in \mathcal S(B_1^+)$ and $u_M=0$ on $\{x_n=0\}\cap \overline{B_1^+}$, as in the previous Step 2 we conclude that $u_M\equiv 0$.
 Since $M>0$ is arbitrary, the proof is finished.
\end{proof}

We can now prove Theorem  \ref{thm:uptoboundary}.

\begin{proof}[Proof of Theorem  \ref{thm:uptoboundary}]
Note that, as in the interior case, we may assume that $3\leq n\leq 9$ by adding superfluous variables and considering a ``cylinder'' with base $\Omega$.
Also, by Lemma \ref{lem:eq Du}, $u\in C^2(\overline\Omega\cap B_1)$.

Recalling that $\Omega$ is a $\vartheta$-deformation of $B_2^+$ with $\vartheta\in[0,\frac{1}{100}]$, it suffices to prove that there exists a dimensional constant $C$ such that
\begin{equation}
\label{eq:dist a}
r^{2-n}\int_{\Omega\cap B_r}|\nabla u|^2\,dx \leq C\, r^{\alpha_n}\|\nabla u\|_{L^2(\Omega\cap B_1)}^2\qquad \mbox{for all }\, r\in (0,1),
\end{equation}
where $\alpha_n$ is given by Proposition \ref{classification}.
Indeed, given $r\in(0,\frac14)$ there exists a dimensional constant $c\in(0,1)$ such that $B_{cr}(re_n)\subset\Omega$, and the $L^\infty$ estimate from  \eqref{eq:Ca L1 int} applied in this ball, together with the inclusion $B_{cr}(re_n)\subset B_{2r}$, give
\[u(re_n)\leq Cr^{-n} \int_{B_{cr}(re_n)}u\,dx\leq Cr^{-n} \int_{\Omega\cap B_{2r}}u\,dx.\]
Thus, once \eqref{eq:dist a} is proven, it follows from this, the Sobolev inequality, and
 Proposition~\ref{prop:L1controlsW12bdry}, that
\[\begin{split}
u(re_n) &\leq C
r^{-n}\int_{\Omega\cap B_{2r}}u\,dx \leq C \Big( r^{2-n}\int_{\Omega\cap B_{2r}}|\nabla u|^2\,dx\Big)^{1/2} \\
&\leq C\, r^{\alpha_n/2}\|\nabla u\|_{L^2(\Omega\cap B_{1/2})}
\leq C\, r^{\alpha_n/2}\|u\|_{L^1(\Omega\cap B_{1})}
\end{split}\]
for all $r \in (0,1/4)$.
Applying this estimate to the functions $u_y(z):=u(y+z)$ with $y \in \partial\Omega\cap B_{1/2}$,
we deduce that
$$
u(x)\leq C\,{\rm dist}(x,\partial \Omega)^{\alpha_n/2}\|u\|_{L^1(\Omega\cap B_{1})}\qquad \mbox{for all }\,x \in \Omega\cap B_{1/2}.
$$
Combining this growth control with \eqref{eq:Ca L1 int} it follows by a standard argument that 
$$
\|u\| _{C^\beta(\overline\Omega\cap B_{1/2} )}  \leq C\|u\|_{L^1(\Omega\cap B_1)},
$$
where $\beta:=\min\{\alpha_n/2,\alpha\}$, with $\alpha$ as in \eqref{eq:Ca L1 int}.
Hence, we only need to prove \eqref{eq:dist a}.

We argue by contradiction, similarly to \cite{S-parabolic,RS-Duke}. 
Assume that there exist a sequence of radii $r_k \in (0,1)$ and of stable solutions $u_k$ with nonlinearities $f_k$ in domains $\Omega_k$, with $u_k,f_k,\Omega_k$ satisfying the hypotheses of the theorem and such that
\begin{equation}\label{agniownonw}
r_k^{2-n}\int_{\Omega_k\cap B_{r_k}}|\nabla u_k|^2\,dx \ge k\, r_k^{\alpha_n}\|\nabla u_k\|^2_{L^2(\Omega_k\cap B_1)} 
\end{equation}
for all $k\in \mathbb N$. Then, for $r\in (0,1)$  we define the nonincreasing function
\[
\Theta(r) : =  \sup_k \sup_{s\in (r,1)} \frac{s^{2-n}\int_{\Omega_k\cap B_{s}}|\nabla u_k|^2\,dx  }{s^{\alpha_n}\|\nabla u_k\|^2_{L^2(\Omega_k\cap B_1)}}  \\\
\]
and note that $\Theta$ is finite since obviously $\Theta(r)\leq r^{2-n-\alpha_n}<\infty$ for all $r>0$. 
By \eqref{agniownonw} and since $\Theta$ is nonincreasing we have $\Theta(r)\uparrow +\infty$ as $r\downarrow 0$.
 Also, by the definition of $\Theta$, for any given $m\in \mathbb N$ there exists $\bar r_m\in (1/m,1)$ and $k_m$ such that 
\begin{equation}\label{agnwiohbgiowubow}
\Theta(\bar r_m) \ge \frac{\bar r_m^{2-n}\int_{\Omega_{k_m}\cap B_{\bar r_m}}|\nabla u_{k_m}|^2\,dx  }{\bar r_m^{\alpha_n}\|\nabla u_{k_m}\|^2_{L^2(\Omega_{k_m}\cap B_1)}}  \ge \frac{9}{10}\Theta(1/m) \ge\frac{9}{10}\Theta(\bar r _m) .     
\end{equation}
Since $\Theta(1/m) \uparrow \infty$ as $m\uparrow \infty$, it follows that $\bar r_m\downarrow 0$. 

Consider the sequence of functions
\[
\bar u_m : = \frac{u_{k_m} (\bar r_m \, \cdot \,)  }{\bar r_m^{\alpha_n} \,\Theta(\bar r_m)\,\|\nabla u_{k_m}\|^2_{L^2(\Omega_{k_m}\cap B_1)}},
\]
and denote $\tilde\Omega_m : = \frac{1}{\bar r_m}\Omega_{k_m}$. 
Then $\tilde \Omega_m\to \R^n_+$ locally uniformly as $m \to \infty$, and 
for all $R\in [1,1/\bar r_m)$ we have
\begin{equation}\label{growth111}
\begin{split}
R^{2-n} \int_{\tilde\Omega_m\cap B_{R}}  |\nabla \bar u_m|^2\,dx &= 
\frac{ (R\bar r_m)^{2-n}\int_{{\Omega_{k_m}} \cap B_{R \bar r_m}}   |\nabla u_{k_m}|^2\,dx }{\bar r_m^{\alpha_n} \,\Theta(\bar r_m)\,\|\nabla u_{k_m}\|^2_{L^2(\Omega_{k_m}\cap B_1)}} 
\\ 
&\le  \frac{ (R\bar r_m)^{2-n}\int_{\Omega_{k_m} \cap B_{R\bar r_m}}   |\nabla u_{k_m}|^2 \,dx}{(R\bar r_m)^{\alpha_n}\, \Theta( R\bar r_m)\,\|\nabla u_{k_m}\|^2_{L^2(\Omega_{k_m}\cap B_1)}}  R^{\alpha_n} \le R^{\alpha_n},
\end{split}
\end{equation}
where we used that $\Theta( R\bar r_m) \le    \Theta( \bar r_m)$ since $R\ge1$.

On the other hand, using \eqref{agnwiohbgiowubow} we have
\begin{equation}\label{growth222}
 \int_{\tilde \Omega_m\cap B_{1}}  |\nabla \bar u_m|^2\,dx \ge \frac{9}{10}.
\end{equation}

Now, similarly to Step 2 in the proof of Proposition \ref{classification}, thanks to Proposition \ref{prop:W12+epbdry} and Lemma \ref{strongconvergence}
there exists a function $\bar u$ such that, up to a subsequence, $\bar u_m\to \bar u$ strongly in $W^{1,2}_{\rm loc}(\R^n_+).$
In addition, since $\tilde \Omega_m\to \R^n_+$, using again Proposition \ref{prop:W12+epbdry} we see that, for every  $R\ge 1$ and $\delta \in (0,1)$,  
\[\begin{split}
\int_{\tilde \Omega_m \cap B_{R} \cap \{x_n \leq \delta\}} |\nabla \bar u_m|^2\,dx &\leq \Big( \int_{\tilde\Omega_m \cap B_{R} \cap \{x_n \leq \delta\}} \hspace{-1mm}|\nabla \bar u_m|^{2+\gamma}\,dx\Big)^{\frac{2}{2+\gamma}} \bigl|\tilde \Omega_m \cap B_{R} \cap \{x_n \leq \delta\}\bigr|^{\frac{2}{2+\gamma}} \\
&\leq C(R) \,\left(\delta^{\gamma/(2+\gamma)}+o_m(1)\right),
\end{split}
\]
where $o_m(1)\to 0$ as $m\to \infty$.
{Hence, as $m\to \infty$} the mass of $|\nabla \bar u_m|^2$ near the boundary can be made arbitrarily small by choosing $\delta$ small enough, and combining this information with the convergence of $\bar u_m\to \bar u$ in $W^{1,2}_{\rm loc}(\R^n_+)$ we deduce that
$\bar u_m\to \bar u$ strongly in $W^{1,2}(B_{R}^+)$ for all $R\ge 1$.

Moreover, by  Theorem \ref{thm:stability} we obtain that $\bar u \in \mathcal S(\R^n_+)$, and taking the limit in \eqref{growth111} and  \eqref{growth222} we obtain 
\[
\int_{B_1^+}  |\nabla \bar u|^2\,dx \ge \frac {9}{10}  \qquad \mbox{and}\qquad   \|\nabla \bar u_R\|^2_{L^2(B_1^+)} =  R^{2-n}\int_{B_R^+}  |\nabla \bar u|^2\,dx \le R^{\alpha_n}   \quad \mbox{for all } R\geq1, 
\]
where $\bar u_R : = \bar u(R\,\cdot\,)$.
Moreover, since the trace operator is continuous in $W^{1,2}(B_{R}^+)$, we have $\bar u=0$ on  $\{x_n=0\}$.
The last bound (applied with $R$ replaced by $2R$) and Proposition \ref{prop:W12+epbdry} give that $\bar u$ satisfies the hypothesis \eqref{P4} in Proposition \ref{classification} with $\alpha=\alpha_n/2$.

Therefore, to show that  $\bar u$ satisfies the assumptions Proposition \ref{classification} with $\alpha = \alpha_n/2$, it only remains to prove that \eqref{P6} holds (with $u$  replaced by $\bar u$).  
This is a consequence of Lemma~\ref{lem:x Du def}:  since $\frac{1}{R}\tilde \Omega_m$ is a $\vartheta$-deformation of $B_2^+$ with $\vartheta = CR \bar r_m $, for all $\eta \in C^{0,1}_c \big( \overline{B_1^+ }\big)$ we have
\[\begin{split}
\int_{(\frac{1}{R}\tilde \Omega_m)\cap B_1}& \Big(\big\{(n-2)\eta + 2 x\cdot\nabla \eta \}\,\eta\, |\nabla \bar u_{m,R}|^2 - 2(x\cdot \nabla \bar u_{m,R}) \nabla \bar u_{m,R}\cdot \nabla(\eta^2) \Big)\,dx \\
&\qquad - \int_{(\frac{1}{R}\tilde \Omega_m)\cap B_1} |x\cdot \nabla \bar u_{m,R}|^2 |\nabla \eta|^2\,dx\le CR\bar r_m \int_{(\frac{1}{R}\tilde \Omega_m)\cap B_1}  |\bar u_{m,R}|^2\,dx,
\end{split}\]
and hence, by letting $m \to \infty$, we deduce that
$$
\int_{B_1^+} \Big(\big\{(n-2)\eta + 2 x\cdot\nabla \eta \}\,\eta\, |\nabla  \bar u_{R}|^2 - 2(x\cdot \nabla  \bar u_{R}) \nabla  \bar u_{R}\cdot \nabla(\eta^2) - |x\cdot \nabla  \bar u_{R}|^2 |\nabla \eta|^2 \Big)\,dx\leq 0
$$
for all $\eta \in C^{0,1}_c \big( \overline{B_1^+}\big)$.
Since this holds for all $R>1$, this proves that \eqref{P6} holds for every $\eta \in C^{0,1}_c \big( \overline{\R^n_+}\big)$ with $u$ replaced by $\bar u$.
Thus, it follows by Proposition \ref{classification} that $\bar u\equiv 0$, a contradiction since $ \int_{B_{1}^+}  |\nabla \bar u|^2\,dx \ge \frac{9}{10} $.
\end{proof}

As explained at the beginning of this section, Theorem \ref{thm:globalCalpha} follows immediately from Theorem \ref{thm:uptoboundary}.
Thus, it only remains to give the:

\begin{proof}[Proof of Corollary \ref{thm:conjecture}]
Since $u\in W^{1,2}_0(\Omega)$, it follows from \eqref{weak-sol-intro} and a standard approximation argument that 
\[\int_\Omega f(u){\rm dist}(\cdot,\partial\Omega)\,dx\leq C\|\nabla u\|_{L^2(\Omega)}.\]
Thus, thanks to the approximation argument in \cite[Theorem 3.2.1 and Corollary 3.2.1]{Dup}, $u$ can be written as the limit of classical solutions $u_\varepsilon \in C^2_0(\overline\Omega)$ of $-\Delta u_\varepsilon=(1-\varepsilon)f(u_\varepsilon)$ in $\Omega$, {as  $\varepsilon\downarrow0$}.
Thus, applying Theorem \ref{thm:globalCalpha} to the functions $u_\varepsilon$, using Proposition \ref{prop:L1}, and letting $\varepsilon\downarrow 0$, the result follows.
\end{proof}

\section{\for{toc}{Estimates for $n \geq 10$}\except{toc}{Estimates for $n \geq 10$: Proof of Theorem \ref{thm11}}}
\label{sect:n10}

In this section we show how our method also gives sharp information in higher dimensions.
We first deal with the interior case, and we prove a strengthened version of Theorem \ref{thm11}.
Recall the definition of the Morrey space $M^{p,\beta}(\Omega)$ given in Section \ref{section1.3}.
Here $p\geq1$ and $\beta\in(0,n)$.

\begin{theorem}\label{thm11 1}
Let $u\in C^2(B_1)$ be a stable 
solution of
$$
-\Delta u=f(u) \quad \text{in }B_1,
$$
{
with  $f:\R \to\R$ locally Lipschitz,}
and assume that $n\geq 10$.
Then
$$
\Vert u\Vert_{M^{\frac{2\beta}{\beta -2},\beta}(B_{1/4})}+\Vert\nabla u\Vert_{M^{2,\beta}(B_{1/4})}
\leq C \Vert u\Vert_{L^1(B_1)}\quad\ \text{for every }\ \
\beta\in (n- 2\sqrt{n-1} -2,n),
$$
for some constant $C$ depending only on $n$ and $\beta$. 
In particular \eqref{eq:p n11} holds.
\end{theorem}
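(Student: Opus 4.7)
The plan is to adapt the interior approach of Theorem \ref{thm:L1-CalphaW12} to dimensions $n\geq 10$ by using a different weight exponent in the stability test function, and then to deduce the $u$-Morrey bound from the $\nabla u$-Morrey bound via a Morrey-Sobolev embedding.

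First, I will derive the analog of \eqref{firstest-n-2} with a general parameter $a\in (0,n-2)$ in place of $a=n-2$. Taking $\xi = (x\cdot\nabla u)|x|^{-a/2}\zeta$ as test function in the stability inequality and repeating the computation of Step 2 of Lemma \ref{conseqestab2} (using \eqref{eq:x-a 1}--\eqref{eq:x-a 2} in \eqref{eq:firstest}), one obtains
\begin{equation*}
(n-2-a)\int_{B_1}|\nabla u|^2 |x|^{-a}\zeta^2\,dx + \Bigl(2a-\tfrac{a^2}{4}\Bigr)\int_{B_1}|x|^{-a-2}|x\cdot \nabla u|^2\zeta^2\,dx \leq C\,\mathrm{R}(\zeta,\nabla u),
\end{equation*}
where $\mathrm{R}$ collects terms supported where $\nabla\zeta\neq 0$. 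When $a>8$ the second coefficient is negative; using the pointwise bound $|x\cdot\nabla u|^2\leq |x|^2|\nabla u|^2$ to absorb it into the first term leaves the effective coefficient $c(n,a) = n-2+a-a^2/4$, which is strictly positive precisely when $a\in (0,2+2\sqrt{n-1})$. This threshold is exactly what produces the critical Morrey exponent of the theorem. For the borderline dimension $n=10$ both coefficients simultaneously vanish at $a=8$, so one can either take $a=8-\varepsilon$ (at the cost of a constant $C_\varepsilon$ that degenerates as $\varepsilon\downarrow 0$) or use the logarithmic variant $\eta = |x|^{-4}|\log|x||^{-\delta/2}\zeta$ indicated in the introduction, whose derivatives introduce a small positive coefficient of order $\delta/|\log|x||$ on the $|\nabla u|^2$ term sufficient to reach every $\beta>2$.

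Choosing $\zeta$ to be a standard cut-off with $\zeta\equiv 1$ on $B_{1/2}$ supported in $B_{3/4}$, the term $\mathrm{R}(\zeta,\nabla u)$ is bounded by $C\|\nabla u\|_{L^2(B_{3/4})}^2$. Since $|x|^{-a}\geq \rho^{-a}$ on $B_\rho$, this gives the Morrey-type bound $\int_{B_\rho}|\nabla u|^2\,dx\leq C\rho^a\|\nabla u\|_{L^2(B_{3/4})}^2$ for $\rho\leq 1/2$. The same derivation carried out around an arbitrary $y\in B_{1/4}$ (obtained by replacing $x$ with $x-y$ throughout) yields $\int_{B_r(y)}|\nabla u|^2\,dx \leq Cr^a\|\nabla u\|_{L^2(B_{3/4})}^2$ and therefore $\|\nabla u\|_{M^{2,n-a}(B_{1/4})}\leq C\|\nabla u\|_{L^2(B_{3/4})}$. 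Letting $a$ range over $(0,2+2\sqrt{n-1})$ produces the Morrey bound for $\nabla u$ for every $\beta\in (n-2\sqrt{n-1}-2,n)$, and applying Proposition \ref{prop:L1controlsW12} (together with a scaling/covering argument to pass from $B_{1/2}$ to $B_{3/4}$) controls $\|\nabla u\|_{L^2(B_{3/4})}$ by $\|u\|_{L^1(B_1)}$.

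Finally, the Morrey bound on $u$ follows from the sharp Adams-Hedberg Morrey-Sobolev embedding: the Riesz potential $I_1$ maps $M^{2,\beta}(\mathbb R^n)$ boundedly into $M^{2\beta/(\beta-2),\beta}(\mathbb R^n)$ whenever $2<\beta<n$. Applying this to a cut-off extension of $u$ (the cut-off error being controlled by $\|u\|_{L^1(B_1)}$ and $\|\nabla u\|_{L^2(B_{3/4})}$) yields $\|u\|_{M^{2\beta/(\beta-2),\beta}(B_{1/4})}\leq C\|u\|_{L^1(B_1)}$, and substituting $\beta=2+\tfrac{4}{p-2}$ recovers \eqref{eq:p n11}. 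The chief technical difficulty is handling the coefficient $c(n,a)$ near the critical threshold $a=2+2\sqrt{n-1}$, where $c(n,a)\downarrow 0$, and in particular the degenerate case $n=10$ where the logarithmic test function is essential for obtaining the estimate at every $\beta>2$.
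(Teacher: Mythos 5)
Your proposal is correct and follows essentially the same route as the paper: the stability inequality with test function $(x\cdot\nabla u)|x|^{-a/2}\zeta$ for $a\in(0,2+2\sqrt{n-1})$, absorption of the negative term via $|x\cdot\nabla u|^2\le|x|^2|\nabla u|^2$ to reach the effective coefficient $n-2+a-a^2/4$, translation to get the Morrey bound on $\nabla u$, Proposition \ref{prop:L1controlsW12} to pass to the $L^1$ norm, and Adams' embedding for the bound on $u$. The only (immaterial) divergence is at $n=10$, where the paper uses the logarithmic weight $|x|^{-4}|\log|x||^{-\delta/2}$ while you correctly observe that taking $a=8-\varepsilon$ directly also works, since for $n=10$ both coefficients $n-2-a$ and $2a-a^2/4$ are already positive for every $a<8$ and the constant is allowed to depend on $\beta$.
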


{
Recall that, in the radially symmetric case, 
\begin{equation}\label{emb}
\mbox{if}\ u\ \mbox{is radial and}\ \nabla u\in M^{2,\beta}(B_{1/4}),\ \mbox{then} \  u\in L^{p}(B_{1/8}) \ \mbox{for all}\ p<2n/(\beta-2);
\end{equation}
}indeed this follows from \cite{CC19} after cutting-off $u$ outside $B_{1/8}$ to have compact support in $B_{1/4}$.

{Thus, Theorem \ref{thm11 1} together with \eqref{emb} yield the following $L^p$ bound for radial solutions}:
\begin{equation}
\label{eq:p n11 2}
\Vert u\Vert_{L^{p}(B_{1/8})}\leq C \Vert u\Vert_{L^1( B_{1})}\quad\ \text{ for every }\ \
p<\bar p_n := \frac{2n}{n-2\sqrt{n-1}-4}.
\end{equation}
Hence, in the radial case we recover the $L^p$ estimates established by Capella and the first author in \cite{CC06}, which are known to be sharp: \eqref{eq:p n11 2} cannot hold for $p=\bar p_n$.

Unfortunately, as shown recently by Charro and the first author in~\cite{CC19}, the embedding \eqref{emb} is {false} for non-radial functions,\footnote{When $\beta\in(2,n)$ is an integer, this can be easily shown considering functions in $\R^n$ depending only on $\beta$ Euclidean variables; see \cite{CC19}. We thus encounter here the same obstruction as in Remark \ref{rem22}.} 
and thus it is not clear whether \eqref{eq:p n11 2} holds in the nonradial case, too.
From $\nabla u\in M^{2,\beta}$, the best one can say is $u\in M^{\frac{2\beta}{\beta-2},\beta}\subset L^{\frac{2\beta}{\beta-2}}$ as stated in Theorem \ref{thm11 1}.

\begin{proof}[Proof of Theorem \ref{thm11 1}]
We split the proof into two cases.

\vspace{2mm}

\noindent
{\bf Case 1:} Assume first $n\geq 11$.
Then, repeating the proof of Lemma \ref{conseqestab2}, in Step 2 we can take an exponent $a$ satisfying
\begin{equation}
\label{eq:choice a}
8<a<2(1+\sqrt{n-1})<n-2.
\end{equation}
Then, choosing  $0\leq \zeta\leq 1$ such that $\zeta_{|B_{1/4}}=1$, $\zeta_{|\R^n\setminus B_{1/2}}=0$,
and  $|\nabla \zeta| \leq C$, we 
obtain
$$
 \int_{B_{1/4}}   \Big\{(n-2-a)  |\nabla u|^2 + \Big( 2a -\frac{a^2}{4} \Big) |x\cdot \nabla u|^2 |x|^{-2}   \Big\}  |x|^{-a} \,dx
 \le C(n, a) \int_{B_{1/2}\setminus B_{1/4}} |\nabla u|^2\,dx.
$$
Since $2a-a^2/4<0$, 
the left hand side above can be bounded from below by
$$
(n-2+a-a^2/4) \int_{B_{1/4}}    |\nabla u|^2  |x|^{-a} \,dx,
$$
and because $n-2+a-a^2/4>0$ (thanks to the choice of $a$ in \eqref{eq:choice a}), we deduce that 
$$
\int_{B_{1/4}}    |\nabla u|^2  |x|^{-a} \,dx
\leq   C \int_{B_{1/2}\setminus B_{1/4}} |\nabla u|^2\,dx\leq C\|u\|_{L^1(B_1)}^2,
$$
where the last inequality follows from Proposition \ref{prop:L1controlsW12}.

Applying this estimate to the functions $u_y(x):=u(y+x)$ with $y \in \overline B_{1/4}$, it follows that 
$$
\rho^{-a}\int_{B_\rho(y)}    |\nabla u|^2  \,dx \leq \int_{B_{1/4}(y)}    |\nabla u(x)|^2  |x-y|^{-a} \,dx \leq C\|u\|_{L^1(B_1)}^2\quad \mbox{for all }\, y \in \overline B_{1/4}, \, \rho\in (0,{\textstyle \frac14}).
$$
This proves that $\nabla u\in M^{2,\beta}(B_{1/4})$ for every $\beta:=n-a > n-2\sqrt{n-1}-2$.

Now, after cutting-off $u$ outside of $B_{1/8}$ to have compact support in $B_{1/4}$, we can apply  \cite[Proposition~3.1 and Theorems~3.1 and 3.2]{A} (see also the proof in \cite[Section 4]{CC19}) and, since $\beta \in (2,n)$, we deduce that
$u\in M^{\frac{2\beta}{\beta -2},\beta}(B_{1/8})$.
This estimate in $B_{1/8}$ can also be stated in $B_{1/4}$, as in Theorem  \ref{thm11 1}, after an scaling and covering argument.
Taking $p=\frac{2\beta}{\beta-2}$, this leads to \eqref{eq:p n11}.
\\

\noindent
{\bf Case 2:} Assume now $n=10$.
Then, repeating the proof of Lemma \ref{conseqestab2}, in Step 2 we take 
\begin{equation}
\label{eq:choice-n=10}
\eta=|x|^{-4}\big|\log|x|\big|^{-\delta/2}\zeta,
\end{equation}
with $\delta>0$ small.
Then, choosing  $0\leq \zeta\leq 1$ such that $\zeta_{|B_{1/4}}=1$, $\zeta_{|\R^n\setminus B_{1/2}}=0$, 
{and  $|\nabla \zeta| \leq C$}, we 
obtain
\[\begin{split}
 &\int_{B_{1/4}}   \delta\big|\log|x|\big|^{-1-\delta}  |\nabla u|^2|x|^{-8}\,dx \\ 
 &\qquad+ \int_{B_{1/4}}\Big\{2\delta\big|\log|x|\big|^{-1-\delta}|x\cdot \nabla u|^2 |x|^{-2}
 -(\delta^2/4)\big|\log|x|\big|^{-2-\delta}|x\cdot \nabla u|^2 |x|^{-2}   \Big\}  |x|^{-8} \,dx \\
& \le C(n,\delta) \int_{B_{1/2}\setminus B_{1/4}} |\nabla u|^2\,dx.
\end{split}\]
Now, using that 
\[(\delta^2/4)\big|\log|x|\big|^{-2-\delta}\leq 2\delta\big|\log|x|\big|^{-1-\delta}\quad \mbox{in }\, B_{1/4},\]
we deduce
\[\int_{B_{1/4}} \big|\log|x|\big|^{-1-\delta}  |\nabla u|^2|x|^{-8}\,dx
\leq C(n, \delta) \int_{B_{1/2}\setminus B_{1/4}} |\nabla u|^2\,dx.\]
Finally, since for every $\ep>0$ we have
$$|x|^{-8+\ep}\leq C(n,\delta,\ep)\big|\log|x|\big|^{-1-\delta}|x|^{-8}\quad \mbox{in}\ \,B_{1/4},$$
we find that 
\[\int_{B_{1/4}} |\nabla u|^2|x|^{-a}\,dx
\leq C(n, \delta,a) \int_{B_{1/2}\setminus B_{1/4}} |\nabla u|^2\,dx\]
for all $a:=8-\ep<8$.
The rest of the proof is then analogous to the case $n\geq11$.
\end{proof}

We now deal with a boundary version of the same theorem.
We first consider a domain $\Omega$ that is a $\vartheta$-deformation of $B_2^+$ (see Definition \ref{def:deform}).

\begin{theorem}\label{thm11 2}
Let $\Omega\subset \R^n$  be a $\vartheta$-deformation of $B_2^+$  for $\vartheta\in[0,\frac{1}{100}]$, and let $u\in C^0(\overline\Omega\cap B_1)\cap C^2(\Omega\cap B_1)$ be a {nonnegative} stable 
solution of
$$
-\Delta u=f(u) \quad \text{in }\Omega\cap B_1 \qquad \mbox{and} \qquad u=0 \quad \mbox{on } \partial \Omega\cap B_1,
$$
with  $f:\R \to\R$ locally Lipschitz, nonnegative, {and nondecreasing}.
Assume that $n\geq 10$.
Then
$$
\Vert u\Vert_{M^{\frac{2\beta}{\beta -2},\beta}(\Omega \cap B_{1/2})}+\Vert\nabla u\Vert_{M^{2,\beta}(\Omega \cap B_{1/2})}
\leq C \Vert u\Vert_{L^1(\Omega\cap B_1)}\quad\ \text{for all }\, \
\beta\in (n- 2\sqrt{n-1} -2,n),
$$
for some constant $C$ depending only on $n$ and $\beta$. 
\end{theorem}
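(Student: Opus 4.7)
The plan is to mimic the proof of Theorem~\ref{thm11 1}, replacing the interior identity of Lemma~\ref{conseqestab2} by its boundary counterpart Lemma~\ref{lem:x Du def}, and the interior $L^1\to W^{1,2}$ bound of Proposition~\ref{prop:L1controlsW12} by its boundary counterpart Proposition~\ref{prop:L1controlsW12bdry}. Concretely, I would plug into Lemma~\ref{lem:x Du def} the same test function as in the interior case: $\eta=|x|^{-a/2}\zeta$ when $n\geq 11$ (with $a$ as in \eqref{eq:choice a}), and $\eta=|x|^{-4}\bigl|\log|x|\bigr|^{-\delta/2}\zeta$ when $n=10$, where $\zeta\in C^{0,1}_c(B_{1/2})$ equals $1$ on $B_{1/4}$. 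Running the algebra of Step~2 of Lemma~\ref{conseqestab2} and of Cases~1--2 in the proof of Theorem~\ref{thm11 1}, the left-hand side produces the strictly positive quadratic expression $(n-2+a-a^2/4)\int_{\Omega\cap B_{1/2}}|\nabla u|^2|x|^{-a}\zeta^2\,dx$ (with positive coefficient by the choice of $a$), plus an annular remainder controlled by $C\int_{\Omega\cap B_{1/2}\setminus B_{1/4}}|\nabla u|^2\,dx$. The new ingredient is the $\vartheta$-error from Lemma~\ref{lem:x Du def}: for our choice of $\eta$ it is bounded by $C(a)\vartheta\int_{\Omega\cap B_{1/2}}|\nabla u|^2|x|^{-a}\zeta^2\,dx$ plus harmless annular terms.

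Next, provided $\vartheta$ is small enough in terms of $n$ and $\beta$, the $\vartheta$-error absorbs into the main term. Invoking Proposition~\ref{prop:L1controlsW12bdry} and a covering argument to control the annular remainder yields
$$\int_{\Omega\cap B_{1/4}}|\nabla u|^2|x|^{-a}\,dx\le C\|u\|_{L^1(\Omega\cap B_1)}^2,$$
and since $|x|^{-a}\ge\rho^{-a}$ on $B_\rho$, this already gives the desired Morrey bound on $\nabla u$ centered at $0\in\partial\Omega$ and every $\rho\in(0,\tfrac14)$. To propagate the bound to an arbitrary boundary point $y_0\in\partial\Omega\cap B_{1/2}$ I would re-center the diffeomorphism $\Phi$ at $y_0$ (after translation and rotation); by the $C^3$ regularity of $\Phi$ the new parametrization remains a deformation with constant comparable to $\vartheta$, and the same argument applies. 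For an interior point $y\in\Omega\cap B_{1/2}$, I would distinguish the case $\rho\le\tfrac12\dist(y,\partial\Omega)$, where $B_\rho(y)$ lies well inside $\Omega$ and the interior Morrey bound of Theorem~\ref{thm11 1} (applied to $u$ rescaled on a ball $\subset\Omega$) handles matters, from the case $\rho>\tfrac12\dist(y,\partial\Omega)$, where $B_\rho(y)$ is contained in a boundary ball of comparable radius already handled by the previous step.

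This delivers $\nabla u\in M^{2,\beta}(\Omega\cap B_{1/2})$ with the stated quantitative control. Extending $u$ by zero outside $\Omega$---legitimate since $u=0$ on $\partial\Omega\cap B_1$---the same Morrey--Sobolev embedding used in Case~1 of the proof of Theorem~\ref{thm11 1} (see \cite{A} and \cite{CC19}) upgrades the gradient estimate to the claimed bound $\|u\|_{M^{2\beta/(\beta-2),\beta}(\Omega\cap B_{1/2})}\le C\|u\|_{L^1(\Omega\cap B_1)}$. The main obstacle I anticipate is the uniform absorption of the $\vartheta$-term as $a\uparrow 2(1+\sqrt{n-1})$: the coefficient $n-2+a-a^2/4$ degenerates there, so the absorption forces $\vartheta$ to be small in terms of $\beta$. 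I expect this to require a preliminary rescaling (zooming into a small boundary ball reduces the effective deformation constant by the scaling factor), followed by a finite covering of $\partial\Omega\cap B_{1/2}$ by such balls, with the final constant depending on $\beta$ through this covering.
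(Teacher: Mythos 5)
Your proposal is correct and follows essentially the same route as the paper: the paper also plugs $\eta=|x|^{-a/2}\psi$ into Lemma~\ref{lem:x Du def}, absorbs the $\vartheta$-error by observing that $\rho^{-1}(\Omega\cap B_\rho)$ is a $(c\rho\vartheta)$-deformation of $B_2^+$ (so the error coefficient becomes $C\rho\vartheta$, made small by choosing $\rho_0$ small depending on $n$ and $a$ --- exactly the rescaling fix you anticipate for the degeneration as $a\uparrow 2(1+\sqrt{n-1})$), then invokes Proposition~\ref{prop:L1controlsW12bdry} and concludes with the same covering and Morrey--Sobolev embedding as in Theorem~\ref{thm11 1}.
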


\begin{proof}
Assume $n\geq11$; the case $n=10$ can be handled similarly (as done in the proof of Theorem \ref{thm11 1}).
In this case we start from Lemma \ref{lem:x Du def} and, as in Step 1 in the proof of Proposition \ref{classification}, we let 
$\psi\in C^\infty_c (B_1)$ be some  radial decreasing nonnegative cut-off function with $\psi\equiv 1$ in $B_{1/2}$.
In Lemma \ref{lem:x Du def} we use the test function $\eta(x) := |x|^{-a/2} \psi(x)$ with $a<n$.
Then, since the domain $\rho^{-1}(\Omega\cap B_\rho)$ is a $(c\rho\vartheta)$-deformation of $B_2^+$ (for some dimensional constant $c$), we deduce that 
\begin{multline*}
 \int_{\Omega\cap B_\rho}   \Big\{(n-2-a-C\rho\vartheta)  |\nabla u|^2 + \Big( 2a -\frac{a^2}{4} \Big) (x\cdot \nabla u)^2 |x|^{-2}   \Big\}  |x|^{-a} \,dx
 \\
 \le C(n, a) \rho^{-a}\int_{\Omega\cap B_{2\rho}\setminus B_\rho} |\nabla u|^2\,dx.
 \end{multline*}
 Hence, given $a$ satisfying \eqref{eq:choice a}, we can take $\rho_0$ sufficiently small (depending on $n$ and $a$) so that $n-2+a-a^2/4-C\rho\vartheta>0$ for all $\rho\leq \rho_0$.
 This allows us to argue as in the proof of Theorem \ref{thm11 1} to get
\[\int_{\Omega\cap B_{\rho_0}}    |\nabla u|^2  |x|^{-a} \,dx 
\leq C\int_{\Omega\cap B_{1/2}} |\nabla u|^2\,dx \leq C\|u\|_{L^1(\Omega\cap B_1)}^2\]
by Proposition \ref{prop:L1controlsW12bdry}.
We now conclude as in  Theorem \ref{thm11 1}.
\end{proof}

We finally give the:

\begin{proof}[Proof of Theorem \ref{thm11}]
The estimate \eqref{eq:p n11} follows from  Theorem \ref{thm11 1} by taking $p=\frac{2\beta}{\beta-2}$ and using a covering argument.
On the other hand, \eqref{eq:p n11 global} follows from Theorems \ref{thm11 1} and \ref{thm11 2}, using again a covering argument.
\end{proof}

\appendix

\section{Technical lemmata}

The next lemma is a regularity and compactness result for superharmonic functions.
For an integrable function $v$ to be superharmonic, we mean it in the distributional sense.
For all our applications of the lemma one could further assume that $v\in W^{1,2}(B_R)$ and that $-\Delta v\geq0$ is meant in the usual $W^{1,2}$ weak sense (which, in this case, is equivalent to the distributional sense), but we do not need this additional hypothesis.

\begin{lemma}\label{strongconvergence}
Let $v\in L^1(B_R)$ be superharmonic in a ball $B_R\subset\R^n$, and let $r\in (0,R)$.
Then:
\begin{itemize}
\item[(a)] The distribution $-\Delta v=|\Delta v|$ is a nonnegative measure in $B_R$, $v\in W^{1,1}_{\rm loc}(B_R)$, 
$$
\int_{B_r}|\Delta v|\leq \frac{C}{(R-r)^2} \int_{B_R}|v|\,dx,
\qquad \mbox{and}
\qquad 
\int_{B_r}|\Delta v|\leq \frac{C}{R-r} \int_{B_R}|\nabla v|\,dx,
$$ 
where $C>0$ is a dimensional constant.
In addition, 
$$\int_{B_r}|\nabla v|\,dx\leq C(n,r,R) \int_{B_R}|v|\,dx
$$ 
for some constant $C(n,r,R)$ depending only on $n$, $r$, and $R$.
\end{itemize}

Assume now that $v_k\in L^1(B_R)$, $k=1,2,...$, is a sequence of superharmonic functions with $\sup_k \|v_k\|_{L^1(B_R)}<\infty$.
Then:
\begin{itemize}
\item[(b1)]  Up to a subsequence, $v_k\to v$ strongly in $W^{1,1}(B_r)$ to some superharmonic function~$v$.

\item[(b2)]
In addition, if for some $\gamma>0$ we have  $\sup_k \|v_k\|_{W^{1,2+\gamma}(B_r)}<\infty$, then $v_k\to v$  strongly in $W^{1,2}(B_r)$.
\end{itemize}
\end{lemma}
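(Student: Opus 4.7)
My plan is to prove the three estimates in (a) in sequence and then leverage them for (b1) and (b2). For (a), I first observe that a nonnegative distribution is, by Riesz representation, a nonnegative Radon measure; hence $-\Delta v$ is such a measure and $|\Delta v|=-\Delta v$. For the first estimate, I pick a cutoff $\eta\in C^\infty_c(B_R)$ with $0\le\eta\le 1$, $\eta\equiv 1$ on $B_r$, and $\|\Delta\eta\|_\infty\le C(R-r)^{-2}$, so that
$$\int_{B_r}d|\Delta v|\le\langle -\Delta v,\,\eta\rangle=-\int_{B_R}v\,\Delta\eta\,dx\le\frac{C}{(R-r)^2}\|v\|_{L^1(B_R)}.$$
To establish $v\in W^{1,1}_{\rm loc}(B_R)$ together with the stated bound, I fix an intermediate radius $r'\in(r,R)$ and decompose $v=w+h$ in $B_{r'}$, where $w$ is the Newton potential of $-\Delta v|_{B_{r'}}$ (built from the Green's function of $B_{r'}$) and $h:=v-w$ is harmonic in $B_{r'}$. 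Classical Riesz-potential estimates yield $\|w\|_{W^{1,1}(B_{r'})}\le C\,\mu(B_{r'})$, which the first estimate bounds by $\|v\|_{L^1(B_R)}$; standard interior gradient estimates for harmonic functions bound $\|\nabla h\|_{L^\infty(B_r)}$ by $C\|h\|_{L^1(B_{r'})}$, which is again controlled by $\|v\|_{L^1(B_R)}$. Adding these gives the $W^{1,1}$ estimate. Now that $v\in W^{1,1}_{\rm loc}$ is known, the second estimate follows from a single integration by parts using $\eta\in C^\infty_c(B_R)$ with $\|\nabla\eta\|_\infty\le C(R-r)^{-1}$: $\int\eta\,d(-\Delta v)=\int\nabla\eta\cdot\nabla v\,dx$.

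For (b1), part (a) immediately yields a uniform $W^{1,1}(B_{r'})$ bound on $\{v_k\}$ for any $r<r'<R$, together with uniform bounds on the total variation of $\mu_k:=-\Delta v_k$. By Rellich--Kondrachov and Banach--Alaoglu, after extracting a subsequence, $v_k\to v$ in $L^1(B_{r'})$ and $\mu_k\rightharpoonup^*\mu$ for some nonnegative Radon measure $\mu$; passing to the limit in the distributional equations gives $-\Delta v=\mu$, so $v$ is superharmonic. To upgrade to strong $W^{1,1}$ convergence, I again split $v_k=w_k+h_k$ on $B_{r'}$ into Newton-potential and harmonic parts: the harmonic parts $h_k=v_k-w_k$ are uniformly $L^1$-bounded and converge in $C^\infty_{\rm loc}(B_{r'})$ by standard interior estimates on harmonic functions, while the Newton-potential parts $w_k$ are uniformly bounded in $W^{1,q}(B_r)$ for any $q<n/(n-1)$. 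Strong $W^{1,1}(B_r)$ convergence of $w_k\to w$ then follows from the compactness of the operator $\mu\mapsto\nabla(G\ast\mu)$ from bounded nonnegative measures (endowed with the weak-$*$ topology) into $L^1_{\rm loc}$, which can be proved via the Fr\'echet--Kolmogorov criterion using the modulus-of-continuity estimate $\|\tau_h\nabla G-\nabla G\|_{L^1(B_R)}=O(|h|\log|h|^{-1})$ together with the uniform $L^q$ bound to ensure equi-integrability.

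For (b2), from (b1) I already have $\nabla v_k\to\nabla v$ in $L^1(B_r)$. Combined with the additional uniform bound on $\|\nabla v_k\|_{L^{2+\gamma}(B_r)}$, the elementary $L^p$ interpolation inequality
$$\|f\|_{L^2(B_r)}\le\|f\|_{L^1(B_r)}^{\theta}\,\|f\|_{L^{2+\gamma}(B_r)}^{1-\theta},\qquad\theta:=\frac{\gamma}{2(1+\gamma)}\in(0,1),$$
applied to $f=\nabla v_k-\nabla v$ immediately yields $\nabla v_k\to\nabla v$ in $L^2(B_r)$, and therefore $v_k\to v$ in $W^{1,2}(B_r)$.

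The hard part will be the strong $W^{1,1}$ convergence in (b1): weak-$*$ convergence of the measures $\mu_k$ combined with the singular (but locally integrable) nature of the Newton kernel $\nabla G(z)=c\,z/|z|^n$ makes the compactness of the associated Riesz-potential operator the most delicate point; the rest of the proof is essentially bookkeeping built on this compactness together with the Newton-potential decomposition from (a).
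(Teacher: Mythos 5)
Your proposal is correct and follows essentially the same route as the paper: decompose $v$ into the Newtonian potential of the measure $-\Delta v$ plus a harmonic remainder, use compactness of the convolution operators $\mu\mapsto\Phi\ast\mu$ and $\mu\mapsto\nabla\Phi\ast\mu$ on bounded sets of measures (which the paper obtains by citing \cite[Corollary 4.28]{Brezis} rather than re-proving it via Fr\'echet--Kolmogorov), and conclude (b2) by the same $L^1$--$L^{2+\gamma}$ interpolation with $\theta=\gamma/(2(1+\gamma))$. The only deviations (Green's function of a sub-ball versus cutting off the measure and convolving with the fundamental solution, and your direct proof of the kernel compactness) are cosmetic.
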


\begin{proof}
(a) By assumption we know that
\begin{equation}\label{xi-d}
\langle -\Delta v,\xi \rangle = \int_{B_R} v(-\Delta \xi)\,dx\geq0\quad \mbox{for all nonnegative}\ \xi\in C^\infty_c(B_R).
\end{equation}
Let $0<r<\rho<R$ and choose a nonnegative function $\chi\in C^\infty_c(B_R)$ with $\chi\equiv1$ in $B_\rho$.
Now, for all $\eta\in C^\infty_c(B_\rho)$, using \eqref{xi-d} with the test functions $\|\eta\|_{C^0}\chi\pm \eta\geq0$ in $B_R$, we deduce that $\pm \langle -\Delta v,\eta \rangle \leq 
\|\eta\|_{C^0}\|v\|_{L^1(B_R)} \|\Delta\chi\|_{C^0}\leq C\|\eta\|_{C^0}.$
Thus, $-\Delta v$ is a nonnegative measure in $B_\rho$, for all $\rho<R$.

Let us now take $\rho=\frac12(r+R)$, and consider $\chi$ as before satisfying $|\nabla \chi|\leq \frac{C}{R-r}$ and $|D^2\chi|\leq \frac{C}{(R-r)^2}$.
Then, since $-\Delta v \geq 0$, we have
\begin{equation}\label{uuuu}
\int_{B_r}|\Delta v|
\leq -\int_{B_R} \Delta v \,\chi=-\int_{B_R} v\, \Delta \chi\,dx \leq \frac{C}{(R-r)^2}\|v\|_{L^1(B_R)}.
\end{equation}

To prove that $v\in W^{1,1}_{\rm loc}(B_R)$, we define {on $\R^n$} the measure 
$\mu:=\chi\,(-\Delta v)$,
and we consider the fundamental solution $\Phi=\Phi(x)$ of the Laplacian in $\R^n$ ---{that is, $\Phi(x)= c \log|x|$ if $n=2$ and $\Phi(x)= c_n |x|^{2-n}$ if $n\geq3$}.

Define the $L^1_{\rm loc}(\R^n)$ function  $\tilde v :=  \Phi \ast \mu$. 
Since $\Phi \in W^{1,1}_{\rm loc}(\R^n)$ it is easy to check (using the definition of weak derivatives) that $\tilde v\in W^{1,1}(B_R)$ and $\nabla \tilde v= \nabla \Phi\ast \mu$.
Furthermore, from \eqref{uuuu} (with $r$ replaced by $\rho$), {one easily deduces}  that
\begin{equation}
\label{eq:W11 lapl}
\|\tilde v\|_{W^{1,1}(B_\rho)} \leq C\|v\|_{L^1(B_R)},
\end{equation}
where the constant $C$ depends only on $n$, $r$, and $R$ ({recall that $\rho=\frac12(r+R)$}).

On the other hand, using \eqref{eq:W11 lapl}, we see that $w := v-\tilde v$ satisfies
\[
\|w\|_{L^1(B_\rho)} \le \|v\|_{L^1(B_\rho)} +  \|\tilde v\|_{L^1(B_\rho)}  \le C\|v\|_{L^1(B_R)} 
\]
and 
\[
\Delta w  = 0 \quad \mbox{in }B_\rho.
\]
By standard interior estimates for harmonic functions, this leads to
$$
\|w\|_{C^2(\overline B_r)}
\leq C\|w\|_{L^1(B_\rho)} \le C\|v\|_{L^1(B_R)}.
$$
In particular, recalling \eqref{eq:W11 lapl},
we have shown that $v\in W^{1,1}(B_r)$ and  $\|v\|_{W^{1,1}(B_r)}\leq C\|v\|_{L^1(B_R)}$.
 
 Finally, exactly as in \eqref{uuuu}, we have
$$
\int_{B_r}|\Delta v|
\leq -\int_{B_R} \Delta v \,\chi=\int_{B_R} \nabla v\cdot \nabla  \chi\,dx \leq \frac{C}{R-r}\|\nabla v\|_{L^1(B_R)},
$$
finishing the proof of (a).

\vspace{2mm}

(b1) Let now $v_k$ be a bounded sequence in $L^1(B_R)$.
Define $\mu_k$, $\tilde v_k$, $w_k$ {as we did in the proof of (a), but} with $v$ replaced by $v_k$.
Note that the operators $\mu\mapsto \Phi\ast \mu$ and $\mu\mapsto \nabla \Phi\ast \mu$ are compact from the space of measures (with finite mass and support in $B_R$) to $L^1(B_r)$.
This is proved in a very elementary way in \cite[Corollary 4.28]{Brezis} when these operators are considered from $L^1(\R^n)$ to $L^1(B_r)$, but the same exact proof works for measures.

Thus, up to a subsequence, $\tilde v_k$ converges in $W^{1,1}(B_r)$.
Since $w_k=v_k-\tilde v_k$ are harmonic and uniformly bounded in $L^1(B_\rho)$, up to a subsequence also $w_k$ converges in $W^{1,1}(B_r)$.
Therefore, we deduce that a subsequence of $v_k$ converges in $W^{1,1}(B_r)$, which proves (b1).

\vspace{2mm}

(b2) If in addition we have $\sup_k \|v_k\|_{W^{1,2+\gamma}(B_r)}<\infty$, using H\"older inequality we obtain
\[
\|\nabla (v_k -v)\|_{L^2(B_r)} \le \|\nabla (v_k -v)\|_{L^1(B_r)}^{\frac{\gamma}{2(1+\gamma)} }\|\nabla (v_k -v)\|_{L^{2+\gamma}(B_r)}^{\frac{2+\gamma}{2(1+\gamma)}} \le C \|v_k -v\|^{\frac{\gamma}{2(1+\gamma)}}_{W^{1,1}(B_r)} \to 0,
\]
which shows that  $v_k\to v$ strongly in $W^{1,2}(B_r)$.
\end{proof}

We now discuss a result about the composition of Lipschitz functions with $C^2$ functions. This result is far from being sharp in terms of the assumptions, but it suffices for our purposes. 
For its proof (as well as for other results proved in this paper) we shall need the coarea formula,
which we recall here for the convenience of the reader (we refer to \cite[Theorem 18.8]{Maggi} for a proof):

\begin{lemma}
\label{lem:coarea}
Let $\Omega\subset\R^n$ be an open set, and let  $u:\Omega\to \R$ be a Lipschitz function.
Then, for every function $g:\Omega \to \R$ such that {$g_-\in L^1(\Omega)$}, 
the integral of $g$ over $\{u=t\}$ is well defined in $(-\infty,+\infty]$ for a.e. $t\in\R$ and
$$
\int_\Omega g\,|\nabla u|\,dx =\int_\R \biggl(\int_{\{u=t\}}g\,d\mathcal H^{n-1}\biggr)\,dt.
$$
\end{lemma}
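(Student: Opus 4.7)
The plan is to reduce the statement to the indicator case $g = \mathbf{1}_E$ for Borel sets $E \subset \Omega$, and then to verify that case by a change-of-variables argument for smooth $u$ extended to Lipschitz $u$ by mollification. First I would split $g = g_+ - g_-$; since $|\nabla u|$ is essentially bounded by $\mathrm{Lip}(u)$ (by Rademacher's theorem) and $g_- \in L^1(\Omega)$, the $g_-$ contribution fits into the $L^1$ version of the formula, reducing matters to $g \geq 0$. Approximating $g$ from below by simple functions and invoking monotone convergence on both sides, I reduce further to proving
\[
\int_E |\nabla u|\,dx \;=\; \int_\R \mathcal H^{n-1}(E \cap \{u = t\})\,dt \qquad \text{for every Borel } E \subset \Omega,
\]
where the measurability of the integrand on the right must also be checked.

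For the smooth case, I would argue locally. On the open set $\{\nabla u \neq 0\}$, at each point some partial derivative, say $\partial_n u$, is nonzero, so that $\Phi(x) := (x_1,\ldots,x_{n-1},u(x))$ is a local $C^1$-diffeomorphism with Jacobian $|\partial_n u|$. The identity then reduces to Fubini in $\Phi$-coordinates, combined with the fact that on the level sets of $u$ the induced area element carries a factor $|\nabla u|/|\partial_n u|$; a countable partition of $\{\nabla u \neq 0\}$ globalizes this. On the critical set $\{\nabla u = 0\}$ the left-hand side vanishes, and so does the right-hand side: by a Morse–Sard type argument, for a.e.\ $t$ the level $\{u=t\}$ is a $C^1$ hypersurface away from a $\mathcal H^{n-1}$-null set, so $\mathcal H^{n-1}(\{u=t\} \cap \{\nabla u = 0\}) = 0$ for a.e.\ $t$.

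For the general Lipschitz case I would regularize by convolution: on a relatively compact subdomain, $u_\varepsilon := u \ast \rho_\varepsilon$ is smooth with $\|\nabla u_\varepsilon\|_\infty \le \mathrm{Lip}(u)$, and $\nabla u_\varepsilon \to \nabla u$ a.e.\ by Rademacher. Applying the smooth coarea formula to each $u_\varepsilon$, the left-hand sides converge by dominated convergence. The \emph{main obstacle} is the passage to the limit on the right-hand side, together with verifying that $t \mapsto \mathcal H^{n-1}(E \cap \{u = t\})$ is Lebesgue measurable for a merely Lipschitz $u$. I would handle this by working first with the Hausdorff pre-measures $\mathcal H^{n-1}_\delta$ in place of $\mathcal H^{n-1}$ (these yield Borel-measurable integrands via a Carathéodory-type argument), proving the identity at the pre-measure level through the area formula applied to the Lipschitz graph map $x \mapsto (x, u(x))$ from $\Omega$ into $\R^{n+1}$ (whose Jacobian is $\sqrt{1+|\nabla u|^2}$ and which, combined with Cauchy–Binet, splits into the coarea identity), and finally letting $\delta \downarrow 0$ by monotone convergence. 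This is essentially Federer's route and is the cleanest way to bypass the regularity difficulties inherent in the Lipschitz setting.
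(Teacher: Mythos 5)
First, note that the paper does not actually prove this lemma: it is stated in the appendix with an explicit pointer to \cite[Theorem 18.8]{Maggi}, so there is no in-paper argument to compare against. Your preliminary reductions are correct (splitting off $g_-$ using $|\nabla u|\le \mathrm{Lip}(u)$ and the nonnegative case, monotone approximation by simple functions, reduction to $g=\mathbf{1}_E$), and your treatment of the smooth case is fine: for the mollifications $u_\varepsilon$, which are $C^\infty$, the genuine Morse--Sard theorem applies, so almost every level set avoids the critical set entirely. (Be careful, though, that for merely $C^1$ functions Morse--Sard fails by Whitney's example; the fact that $\mathcal H^{n-1}(\{u=t\}\cap\{\nabla u=0\})=0$ for a.e.\ $t$ in the Lipschitz case is a separate, nontrivial lemma, not a "Morse--Sard type argument''.)

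The genuine gap is the passage from $u_\varepsilon$ to $u$ on the right-hand side. While $\int_E|\nabla u_\varepsilon|\,dx\to\int_E|\nabla u|\,dx$ by dominated convergence, the quantities $\mathcal H^{n-1}(E\cap\{u_\varepsilon=t\})$ have no reason to converge to $\mathcal H^{n-1}(E\cap\{u=t\})$: level sets are unstable under uniform perturbation, and $\mathcal H^{n-1}$ of a level set is neither upper nor lower semicontinuous along such approximations. This instability is precisely why the standard proofs (Federer, Evans--Gariepy, Maggi) do not mollify $u$; they instead linearize $u$ on small pieces via a Lusin/Vitali-type decomposition, or go through the structure theory of the sets of finite perimeter $\{u>t\}$. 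Your fallback --- work with $\mathcal H^{n-1}_\delta$, apply the area formula to the graph map, invoke Cauchy--Binet, "this is Federer's route'' --- names the correct machinery but does not execute it: decomposing $\mathcal H^{n}$ of the graph into an integral over $t$ of measures of its horizontal slices is itself an instance of the coarea formula for rectifiable sets, so as written the step is circular. As it stands the proposal is an honest outline of where the difficulty lies rather than a proof; to close it you would either have to carry out Federer's slicing argument in detail or, as the paper does, simply invoke \cite[Theorem 18.8]{Maggi}.
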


We recall that, given a locally Lipschitz function $f$, we defined
\[ f'_-(t) : = \liminf_{h\to 0}\frac{f(t+h)-f(t)}{h}.\]

\begin{lemma}
\label{lem:eq Du}
Let $\Omega\subset\R^n$ be a bounded open set and let  $u \in C^2(\Omega)\cap C^0(\overline\Omega)$ solve $-\Delta u=f(u)$ in $\Omega$, where $f:\R\to \R$ is locally Lipschitz. 
Then:
\begin{itemize}
\item[(i)] Inside the region $\{\nabla u \neq 0\}$ the function $f'(u)$ is well-defined and it coincides a.e. with $f'_-(u)$.
\item[(ii)] $u \in W^{3,p}_{\rm loc}(\Omega)$ for every $p<\infty$ and
$
-\Delta \nabla u=f'(u)\nabla u=f_-'(u)\nabla u$ in the weak sense and also a.e. in $\Omega$.

\item[(iii)] If $\partial\Omega\cap B_1$ is of class $C^3$ and $u|_{\partial\Omega\cap B_1}=0$,
then $u \in (W^{3,p}_{\rm loc}\cap C^2)(\overline\Omega\cap B_1)$ for every $p<\infty.$
\end{itemize}
\end{lemma}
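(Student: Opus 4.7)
The plan is to treat the three parts in order. For (i), I will combine Rademacher's theorem with the coarea formula (Lemma~\ref{lem:coarea}). Let $N\subset\R$ denote the Lebesgue null set where $f$ fails to be differentiable; outside $N$, $f'$ exists and coincides with $f'_-$. Applying the coarea formula to the nonnegative function $g:=|\nabla u|^{-1}1_{u^{-1}(N)\cap\{\nabla u\neq 0\}}$ yields
$$
\bigl|u^{-1}(N)\cap\{\nabla u\neq 0\}\bigr| = \int_\Omega g\,|\nabla u|\,dx = \int_N \biggl(\int_{\{u=t\}\cap\{\nabla u\neq 0\}} |\nabla u|^{-1}\,d\HH^{n-1}\biggr)\,dt = 0,
$$
since $|N|=0$. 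Hence $f'(u)$ is defined and equals $f'_-(u)$ a.e.\ on $\{\nabla u\neq 0\}$, proving (i).

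For (ii), I will first bootstrap the regularity: since $u\in C^2(\Omega)$ and $f$ is locally Lipschitz, the composition $f(u)$ is locally Lipschitz, so $f(u)\in W^{1,\infty}_{\rm loc}(\Omega)$ and consequently $-\Delta u\in W^{1,p}_{\rm loc}(\Omega)$ for every $p<\infty$. Interior Calder\'on-Zygmund estimates then give $u\in W^{3,p}_{\rm loc}(\Omega)$. To identify the equation for $\nabla u$, I will invoke the Stampacchia chain rule for Lipschitz compositions with Sobolev functions, which asserts
$$
\nabla[f(u)] = f'(u)\nabla u \qquad \text{a.e.\ in }\Omega,
$$
with the convention that the right-hand side vanishes whenever $\nabla u=0$ (so the possible failure of $f'(u)$ to be defined on $\{u\in N\}\cap\{\nabla u=0\}$ is harmless). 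Combined with (i), this equals $f'_-(u)\nabla u$ a.e. Differentiating the distributional identity $-\Delta u = f(u)$ in each coordinate direction then produces $-\Delta\nabla u = \nabla[f(u)] = f'_-(u)\nabla u$; since both sides lie in $L^p_{\rm loc}$, the identity holds weakly in $W^{1,p}$ and pointwise a.e.

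For (iii), I will run the analogous bootstrap up to the boundary. Since $u\in C^0(\overline\Omega\cap B_1)$ vanishes on $\partial\Omega\cap B_1$ and $f(u)\in L^\infty_{\rm loc}$ up to the boundary, boundary $L^p$ estimates for the Dirichlet problem on the $C^3$ portion of $\partial\Omega$ (see \cite[Chapter 9]{GT}) give $u\in W^{2,p}_{\rm loc}(\overline\Omega\cap B_1)$ for every $p<\infty$, hence $u\in C^{1,\alpha}(\overline\Omega\cap B_1)$ by Morrey embedding. Then $f(u)\in C^{0,\alpha}$ up to the boundary, so Schauder estimates (applicable since $\partial\Omega\cap B_1$ is $C^3\subset C^{2,\alpha}$) promote $u$ to $C^{2,\alpha}(\overline\Omega\cap B_1)$. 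In particular $f(u)$ becomes locally Lipschitz up to the boundary, and one further application of boundary $L^p$ regularity---exactly as in the interior part of (ii)---yields $u\in W^{3,p}_{\rm loc}(\overline\Omega\cap B_1)$ for every $p<\infty$.

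The only genuinely delicate step is the chain rule in (ii) on the set $\{\nabla u=0\}$, where $f'(u(x))$ may be ill-defined. The resolution is the standard Stampacchia/Marcus-Mizel convention that the product $f'(u)\nabla u$ is taken to be $0$ whenever $\nabla u=0$, regardless of the behaviour of $f'$ at $u(x)$; with this convention the chain rule holds a.e., and the remaining arguments are routine bootstraps using classical interior and boundary elliptic regularity.
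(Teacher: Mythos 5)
Your proof is correct and, for parts (i) and (iii), essentially coincides with the paper's argument: part (i) is the same coarea computation (the paper phrases it as representative-independence of $\int_E f'_-(u)\,dx$ for Borel $E\subset\{\nabla u\neq0\}$, while you show directly that $|u^{-1}(N)\cap\{\nabla u\neq0\}|=0$; these are the same estimate), and part (iii) is the standard boundary bootstrap that the paper simply cites from \cite{GT,Krylov}. The only real difference is in (ii): where you invoke the Stampacchia/Marcus--Mizel chain rule for Lipschitz compositions as a black box, the paper proves the needed identity by hand, writing $-\Delta\delta_i^h u=\delta_i^h(f(u))$ and passing to the limit in the difference quotients --- using the coarea fact from (i) to guarantee $u(x)\notin N$ for a.e.\ $x$ with $\nabla u(x)\neq0$, and handling $\{\nabla u=0\}$ separately via the fact that derivatives of Sobolev functions vanish a.e.\ on level sets (so that $\Delta\nabla u=0=f'_-(u)\nabla u$ a.e.\ there). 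Your route is shorter and legitimate, since the chain rule you cite is a standard theorem whose proof is precisely the difference-quotient argument the paper writes out; the paper's version has the merit of being self-contained and of making explicit why the convention on $\{\nabla u=0\}$ is harmless.
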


\begin{proof}
The first point is a simple application of the coarea formula.
Indeed, if we set $M:=\|u\|_{L^\infty(\Omega)}$, given any Borel set $E\subset \{\nabla u \neq 0\}$ we can apply Lemma \ref{lem:coarea} with $g=\frac{1_E}{|\nabla u|} f'_-(u)$ (in fact we apply the lemma to both $g^+$ and $g^-$, the positive and negative part of $g$, obtaining finite quantities for both $\int_\Omega g^\pm |\nabla u|\,dx$ since $f'_-(u)\in L^\infty(\Omega)$) to get
\begin{equation}
\label{eq:coarea}
\int_Ef_-'(u)\,dx=\int_{-M}^M f_-'(t) g_E(t)\,dt,\qquad \text{with}\quad g_E(t):=\int_{\{u=t\}\cap E}\frac{1}{|\nabla u|}\,d\mathcal H^{n-1}.
\end{equation}
Then, since 
$$
 \int_\R g_E(t)\,dt \leq  \int_\R \biggl(\int_{\{u=t\}\cap \{|\nabla u|\neq0\}}\frac{1}{|\nabla u|}\,d\mathcal H^{n-1}\biggr)\,dt =\bigl|\Omega \cap \{\nabla u\neq 0\}\bigr|<\infty, 
$$
it follows that the function $g_E$ belongs to $L^1(\R)$.
Thus, since $f_{-}'(t)$ belongs to  $L^\infty([-M,M])$,
this proves that the right hand side in \eqref{eq:coarea} is independent of the specific representative chosen for $f'$, and therefore so is the left hand side. 
{Since $E$ is arbitrary and} $f_-'(t)=f'(t)$ a.e., (i) follows.

To prove (ii) we first notice that, since $f(u)$ is Lipschitz inside $\Omega$ (because both $u$ and $f$ are so), it follows that $f(u)\in W^{1,p}$ and by interior elliptic regularity (see for instance \cite[Chapter 9]{GT}) that $u \in W^{3,p}_{\rm loc}(\Omega)$ for every $p<\infty$.
This means that $\nabla u\in W^{2,p}$, and therefore it suffices to show that the identities in (ii) hold a.e. (because then they automatically hold in the weak sense).

Now, in the region $\{\nabla u=0\}$, we have
$$
f'(u)\nabla u=f_-'(u)\nabla u=0\qquad \text{and}\qquad \Delta \nabla u=0\quad \text{a.e.}
$$
(see, e.g., \cite[Theorem 1.56]{Tro}), so the result is true there.

On the other hand,  in the region $\{\nabla u\neq0\}$, for $h>0$ and $1\le i\le n$, let 
\[\delta_i^h w : = \frac{w(\cdot+h\boldsymbol e_i) -w}{h}.\]  
Since $-\Delta u=f(u)$ in $\Omega$, given $\Omega'\subset \subset \Omega$, for $h>0$ sufficiently small we have
\begin{equation}
\label{eq:delta h}
-\Delta \delta_i^h  u= \delta_i^h \big( f(u) \big) \qquad \text{inside }\Omega'.
\end{equation}
Thus, if we define by $D_f\subset \R$ the set of differentiability points of $f$, we see that 
$$
\delta_i^h[f(u)] \to f’(u) \partial_iu=f_-’(u) \partial_iu\qquad \mbox{for all }\,x\in \Omega' \text{ such that }u(x)\in D_f
$$
as $h\to0$.
On the other hand, if we set $N:=\R\setminus D_f$, since $N$ has measure zero ({because $f$ is differentiable a.e., being Lipschitz}) it follows from Lemma \ref{lem:coarea} applied with $g=\frac{1}{|\nabla u|}\,1_{\Omega'\cap \{\nabla u\neq 0\}}\,1_N\circ u$ that
$$
\int_{\Omega'\cap \{\nabla u\neq 0\}} 1_N(u(x)) \,dx=\int_\R 1_N(t)\biggl(\int_{\{u=t\}\cap \Omega'\cap \{\nabla u\neq 0\}}\frac{1}{|\nabla u|}\,d\mathcal H^{n-1}\biggr)\,dt=0,
$$
which proves that  $u(x) \not\in N$ for a.e. $x\in \Omega'\cap \{\nabla u\neq 0\}$.

Hence, we have shown that $\delta_i^h[f(u)] \to f'(u) \partial_iu$ for a.e. $x\in \Omega'\cap\{\nabla u\neq0\}$ (and so also in $L^p$ for any $p<\infty,$ by dominated convergence).
Letting $h\to 0$ in \eqref{eq:delta h} we deduce that $-\Delta\nabla u=f'(u)\nabla u$ a.e. in $\Omega'$, since we already checked the equality a.e. in $\{\nabla u=0\}$. 
Recalling that $\Omega'\subset\subset \Omega$ is arbitrary, this proves (ii).

Finally, (iii) follows by elliptic regularity up to the boundary (see for instance~\cite[Chapter 9]{GT} or \cite[Section 9.2]{Krylov}).
\end{proof}

We conclude this section with a general abstract lemma  due to Simon \cite{Simon} (see also  \cite[Lemma 3.1]{CSV}):

\begin{lemma}\label{lem_abstract}
Let $\beta\in \R$ and $C_0>0$. Let $\sigma :\mathcal B \rightarrow [0,+\infty]$ be a nonnegative function defined on the class $\mathcal B$  of open balls $B\subset \R^n$ and satisfying the following subadditivity property:
\[ \mbox{if}\quad  B \subset \bigcup_{j=1}^N B_j \quad  \mbox{ then }\quad \sigma(B)\le \sum_{j=1}^N \sigma(B_j). \]
Assume also that $\sigma(B_1)< \infty.$

Then, there exists $\delta>0$, depending only on $n$ and $\beta$, such that if
\begin{equation*}\label{hp-lem}
 r^\beta \sigma\bigl(B_{r/4}(y)\bigr) \le \delta r^\beta \sigma\bigl(B_r(y)\bigr)+ C_0\quad \mbox{whenever }B_r(y)\subset B_1,
 \end{equation*}
then
\[ \sigma(B_{1/2}) \le CC_0,\]
where $C$ depends only on $n$ and $\beta$.
\end{lemma}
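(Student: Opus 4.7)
The strategy is a standard Simon-type iteration that extracts quantitative decay from the recursive hypothesis by pairing it with the subadditivity. As a preliminary observation, subadditivity applied to the trivial one-ball cover $B\subset\{B'\}$ gives monotonicity, $\sigma(B)\le \sigma(B')$. Combined with the hypothesis $\sigma(B_1)<\infty$, this guarantees that $\sigma(B_r(y))\le \sigma(B_1)<\infty$ whenever $B_r(y)\subset B_1$, so every quantity appearing in the argument is well defined.

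The first substantive step is to upgrade the hypothesis into a \emph{scale-doubling} estimate by exploiting subadditivity. For any ball $B_r(y)$ with $B_{2r}(y)\subset B_1$, I cover $B_r(y)$ by a dimensional number $N=N(n)$ of balls $\{B_{r/4}(y_i)\}$ with centers $y_i\in B_r(y)$. Subadditivity gives $\sigma(B_r(y))\le \sum_i \sigma(B_{r/4}(y_i))$. Since each $y_i$ satisfies $B_r(y_i)\subset B_{2r}(y)\subset B_1$, the hypothesis applies to $(y_i,r)$ and yields $\sigma(B_{r/4}(y_i))\le \delta\,\sigma(B_r(y_i))+C_0\,r^{-\beta}$. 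Using monotonicity $\sigma(B_r(y_i))\le \sigma(B_{2r}(y))$, I obtain
\[
\sigma(B_r(y))\le N\delta\,\sigma(B_{2r}(y))+N C_0\,r^{-\beta}\qquad\text{for every }B_{2r}(y)\subset B_1.
\]
Equivalently, with $Q(y,r):=r^\beta\sigma(B_r(y))$ and $\hat\delta:=N\delta\,2^{-\beta}$, one has $Q(y,r)\le \hat\delta\,Q(y,2r)+NC_0$.

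The final step is to iterate this one-step estimate at a suitable scale-invariant supremum. Introducing a weighted sup of the form
\[
\mathcal M:=\sup_{B_r(y)\subset B_1}(1-|y|-r)_+^{\gamma}\,r^\beta\,\sigma(B_r(y))
\]
for a dimensional exponent $\gamma>0$ chosen to make $\mathcal M$ finite (using the finiteness of $\sigma(B_1)$ together with the compensation provided by the boundary weight), one applies the scale-doubling estimate to a near-extremizing pair $(y,r)$ and tracks the weight factor $(1-|y|-r)_+/(1-|y|-2r)_+$, which remains uniformly bounded once $\gamma$ is appropriately tuned. Choosing $\delta$ small enough that $N\delta\,2^{-\beta}$ is dominated by $\tfrac12$ (a condition depending only on $n$ and $\beta$), the iteration produces a self-referential inequality $\mathcal M\le \tfrac{1}{2}\mathcal M+CC_0$, hence $\mathcal M\le 2CC_0$. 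Specializing to $(y,r)=(0,1/2)$ gives $\sigma(B_{1/2})\le C'C_0$ with $C'$ depending only on $n$ and $\beta$.

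The main obstacle is precisely the construction of the weighted sup $\mathcal M$ and the verification that the scale-doubling iteration closes on itself \emph{without} reintroducing $\sigma(B_1)$ into the final bound. This is the delicate point of the Simon iteration: the exponent $\gamma$ has to be calibrated against the contraction factor $N\delta\,2^{-\beta}$ so that the weight absorbs the growth arising from the shift of admissibility $\{B_{2r}(y)\subset B_1\}$ from scale $r$ to scale $2r$. Once $\gamma$ and $\delta$ are fixed appropriately, the resulting self-referential inequality is standard and the conclusion follows.
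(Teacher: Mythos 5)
Your preliminary observations (monotonicity from one-ball covers) and your scale-doubling step are both correct: $\sigma(B_r(y))\le N\delta\,\sigma(B_{2r}(y))+NC_0r^{-\beta}$ whenever $B_{2r}(y)\subset B_1$ follows exactly as you derive it. The proof breaks down, however, at the final absorption step, and the failure is structural rather than a matter of ``tuning $\gamma$''. First, a near-maximizer $(y,r)$ of $\mathcal M$ may well satisfy $1-|y|<2r$, in which case $B_{2r}(y)\not\subset B_1$ and the scale-doubling estimate cannot be applied to it at all; your argument is silent on this case. Second, even when $B_{2r}(y)\subset B_1$, rewriting $(1-|y|-r)^\gamma r^\beta\sigma(B_{2r}(y))$ in terms of $\mathcal M$ produces the factor $2^{-\beta}\bigl(\tfrac{1-|y|-r}{1-|y|-2r}\bigr)^\gamma$, and the ratio $\tfrac{1-|y|-r}{1-|y|-2r}=1+\tfrac{r}{1-|y|-2r}$ is unbounded as $1-|y|-2r\downarrow0$; raising it to any power $\gamma>0$ only makes things worse, so the self-referential inequality $\mathcal M\le\tfrac12\mathcal M+CC_0$ is not obtained. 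The obstruction is intrinsic to any concentric-doubling iteration here: the hypothesis forces you to pass to a strictly larger concentric ball, near $\partial B_1$ that ball leaves the domain, and any weight that vanishes at $\partial B_1$ fast enough to help necessarily drops by an unbounded factor when $r$ is doubled. A secondary gap: for $\beta<0$ the finiteness of $\mathcal M$ does not follow from $\sigma(B_1)<\infty$ plus the boundary weight, since $r^\beta\to\infty$ as $r\to0$ for balls centered well inside $B_1$, where the weight is of order one.

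The argument the paper relies on (\cite[Lemma 3.1]{CSV}, after \cite{Simon}) avoids maximizers and weights altogether and instead iterates the covering directly with geometrically shrinking radii: cover $B_{1/2}$ by $C(n)$ balls $B_{r_1/4}(y)$ with centers in $B_{1/2}$, apply the hypothesis to each, cover each resulting $B_{r_1}(y)$ by $C(n)$ balls $B_{r_2/4}(z)$ with $r_2=r_1/2$, and so on; since the radii $r_m$ decrease geometrically with $\sum_m r_m$ small, all the enlarged balls $B_{r_m}(\cdot)$ stay inside $B_1$ at every step, so the boundary never interferes. Each step multiplies the number of balls by a dimensional constant $K$ and contributes a factor $\delta$ plus an error $C_0 r_m^{-\beta}$ per ball, so choosing $\delta<K^{-1}\min\{1,2^{-\beta}\}$ makes the series of errors sum to $C(n,\beta)\,C_0$ and sends the remainder term, which is bounded by $(K\delta)^M\sigma(B_1)$ (this is the only place $\sigma(B_1)<\infty$ is used), to zero. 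To salvage a maximizer-based scheme you would have to replace concentric doubling by an enlargement adapted to the distance to $\partial B_1$, which essentially reproduces this covering argument.
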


\section{A universal bound on the $L^1$ norm}

In this section we recall a classical and simple a priori estimate on the $L^1$ norm of solutions when $f$ grows at infinity faster than a linear function with slope given by the first eigenvalue of the Laplacian.

\begin{proposition}
\label{prop:L1}
Let $\Omega\subset \R^n$ be a bounded domain of class {$C^1$},
and let $u\in C^0(\overline\Omega)\cap C^2(\Omega)$ solve
\[
\left\{
\begin{array}{cl}
-\Delta u=f(u) & \text{in }\Omega\subset \R^n\\
u=0 & \text{on }\partial\Omega
\end{array}
\right.
\]
for some $f: \R\to [0,+\infty)$ satisfying
\begin{equation}
\label{eq:superlin f}
f(t)\geq A t-B\qquad \mbox{for all }\,t \geq 0,\quad \text{with $A>\lambda_1$ and $B\geq 0$,}
\end{equation}
where $\lambda_1=\lambda_1(\Omega)>0$ is the {first eigenvalue of the Laplacian in $\Omega$ with Dirichlet homogeneous boundary condition}.
Then there exists a constant $C$, depending only on $A$, $B$, and $\Omega$, such that
$$
\|u\|_{L^1(\Omega)}\leq C.
$$
\end{proposition}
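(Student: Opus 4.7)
The plan is to use the classical strategy based on testing against the first Dirichlet eigenfunction and against the torsion function of $\Omega$.

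First I would observe that, since $f\geq 0$, the function $u$ is superharmonic in $\Omega$ with zero boundary values, so by the maximum principle $u\geq 0$ in $\Omega$. Next, I would let $\varphi_1>0$ denote the first Dirichlet eigenfunction of $-\Delta$ in $\Omega$ (which belongs to $C^1(\overline\Omega)$ since $\Omega$ is $C^1$, with $\varphi_1>0$ inside $\Omega$ and $\varphi_1=0$ on $\partial\Omega$), normalized by $\int_\Omega \varphi_1\,dx=1$. Multiplying the equation $-\Delta u=f(u)$ by $\varphi_1$, integrating by parts (the boundary term vanishes since $u=\varphi_1=0$ on $\partial\Omega$), and using the assumption \eqref{eq:superlin f} together with $u\geq 0$, I get
\[
\lambda_1\int_\Omega u\,\varphi_1\,dx=\int_\Omega f(u)\,\varphi_1\,dx\geq A\int_\Omega u\,\varphi_1\,dx-B\int_\Omega \varphi_1\,dx.
\]
Since $A>\lambda_1$, this immediately gives
\[
\int_\Omega u\,\varphi_1\,dx\leq \frac{B}{A-\lambda_1}\qquad\text{and therefore}\qquad \int_\Omega f(u)\,\varphi_1\,dx\leq \frac{\lambda_1 B}{A-\lambda_1}.
\]

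The next step is to upgrade this weighted bound into a genuine $L^1$ bound. For this I would introduce the torsion function $w\in C^1(\overline\Omega)\cap C^2(\Omega)$, defined as the (unique) solution of $-\Delta w=1$ in $\Omega$ with $w=0$ on $\partial\Omega$. By the Hopf lemma and the fact that $\partial\Omega$ is $C^1$, both $w$ and $\varphi_1$ are comparable to $\dist(\cdot,\partial\Omega)$ near the boundary; in particular, there exists a constant $c_\Omega>0$, depending only on $\Omega$, such that $w\leq c_\Omega\,\varphi_1$ in $\Omega$. Testing the equation against $w$ (again all boundary terms vanish since $u=w=0$ on $\partial\Omega$),
\[
\int_\Omega u\,dx=\int_\Omega u\,(-\Delta w)\,dx=\int_\Omega w\,(-\Delta u)\,dx=\int_\Omega w\,f(u)\,dx\leq c_\Omega\int_\Omega f(u)\,\varphi_1\,dx\leq C,
\]
where $C$ depends only on $A$, $B$, and $\Omega$. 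This is exactly the bound we are after.

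The argument is almost entirely routine once the two auxiliary functions $\varphi_1$ and $w$ are brought in; there is no serious obstacle. The one point requiring a line of justification is the pointwise comparison $w\leq c_\Omega\varphi_1$, which follows from the Hopf lemma applied to both functions (using that $\partial\Omega\in C^1$). An alternative, which avoids even this mild issue, is to use $\varphi_1$ itself twice: test with $\varphi_1$ to obtain $\int f(u)\varphi_1\leq C$, and then combine with the fact that $u$ is superharmonic to deduce the $L^1$ bound via the Green's function representation $u(x)=\int_\Omega G(x,y)f(u(y))\,dy$ and the pointwise estimate $\int_\Omega G(x,y)\,dx=w(y)\leq c_\Omega\varphi_1(y)$.
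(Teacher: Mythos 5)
Your argument follows exactly the same route as the paper: test with the first eigenfunction $\varphi_1$ to get $\int_\Omega f(u)\varphi_1\,dx\leq C$, then test with the torsion function $w$ and use $w\leq c_\Omega\varphi_1$ to conclude. The eigenfunction step is correct and identical to the paper's. The one non-routine point, however, is precisely the comparison $w\leq c_\Omega\varphi_1$, and your justification of it is not valid under the stated hypothesis that $\Omega$ is merely of class $C^1$. The Hopf lemma requires an interior ball condition (or at least $C^{1,{\rm Dini}}$ regularity of the boundary); in a general $C^1$ domain it can fail, and neither $w$ nor $\varphi_1$ need be comparable to ${\rm dist}(\cdot,\partial\Omega)$, nor need they belong to $C^1(\overline\Omega)$. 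The paper is explicit about this distinction: it derives $\phi\leq C\,\Phi_1$ from a boundary Harnack principle valid in $C^1$ domains (citing Allen--Shahgholian), and notes in a footnote that the distance-function argument you propose only works when $\partial\Omega$ is $C^2$. Your alternative ending via the Green's function does not avoid the issue, since it invokes the same estimate $w\leq c_\Omega\varphi_1$.

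So the gap is localized but genuine: you need to replace ``Hopf lemma plus comparability with the distance'' by an appeal to a boundary Harnack inequality (comparing the two positive superharmonic functions $w$ and $\varphi_1$, both vanishing on $\partial\Omega$, with $-\Delta w=1$ and $-\Delta\varphi_1=\lambda_1\varphi_1$ bounded), which is the tool that survives at $C^1$ regularity. With that substitution your proof coincides with the paper's.
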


\begin{proof}
First of all we note that, since $f\geq 0$, then $u\geq 0$ inside $\Omega$ by the maximum principle.

Let $\Phi_1>0$ be the first Dirichlet eigenfunction of the Laplacian in $\Omega$, so that $-\Delta \Phi_1=\lambda_1\Phi_1$ in $\Omega$ and $\Phi_1=0$ on $\partial\Omega$. 
Then
\begin{equation}
\label{eq:rel u fu}
\lambda_1\int_\Omega u\,\Phi_1\,dx=-\int_\Omega u\,\Delta\Phi_1\,dx=-\int_\Omega \Delta u\,\Phi_1\,dx=\int_\Omega f(u)\,\Phi_1\,dx.
\end{equation}
Thanks to assumption \eqref{eq:superlin f}, we have
$$
\int_\Omega f(u)\,\Phi_1\,dx\geq A\int_\Omega u\,\Phi_1\,dx - B\int_\Omega \Phi_1\,dx,
$$
that combined with \eqref{eq:rel u fu} gives
$$
(A-\lambda_1)\int_\Omega u\,\Phi_1\,dx\leq B\int_\Omega \Phi_1\,dx.
$$
Note that, using \eqref{eq:rel u fu} again, this implies that
$$
\int_\Omega f(u)\,\Phi_1\,dx=\lambda_1\int_\Omega u\,\Phi_1\,dx\leq \lambda_1\frac{B}{A-\lambda_1}\int_\Omega \Phi_1\,dx.
$$
This proves that
\begin{equation}
\label{eq:dist f}
\int_\Omega  f(u)\,\Phi_1\,dx \leq C
\end{equation}
for some constant $C$ depending only on $A$, $B$, and $\Omega$.

Consider now $\phi:\Omega \to \R$ the solution of
$$
\left\{
\begin{array}{cl}
-\Delta \phi=1& \text{in }\Omega\\
\phi=0&\text{on }\partial\Omega.
\end{array}
\right.
$$
We claim that
\begin{equation}
\label{eq:dist phi}
0\leq \phi \leq C\,\Phi_1\qquad \textrm{in } \Omega,
\end{equation}
with $C$ depending only on $\Omega$.
Indeed, the nonnegativity of $\phi$ follows from the maximum principle,
while the second inequality 
follows from the boundary Harnack principle in \cite[Lemma 3.12]{AS}\footnote{See also \cite{RS-C1} for a different proof of the boundary Harnack principle in $C^1$ domains, that is written for nonlocal operators but works as well for the Laplacian. Note that, in $C^2$ domains,  the bound $\phi \leq C\,\Phi_1$ follows immediately from the fact that both $\phi$ and $\Phi_1$ are comparable to the distance function ${\rm dist}(\cdot,\partial\Omega)$.}, after rescaling.

Thus, using \eqref{eq:dist f} and \eqref{eq:dist phi} we get
$$
\int_\Omega u\,dx=-\int_\Omega u\,\Delta \phi\,dx
=\int_\Omega f(u)\,\phi\,dx \leq C\int_\Omega f(u)\,\Phi_1\,dx \leq C,
$$
as desired.
\end{proof}

\end{document}